\numberwithin{equation}{section}
\newcommand{\incl}[1][r]{\ar@<-0.2pc>@{^(-}[#1] \ar@<+0.2pc>@{-}[#1]}
\newcommand{\sgeq}{\mathbin{\geq\hspace{-1.06em}_{_(\:\,_)}}}
\newcommand{\sleq}{\mathbin{\leq\hspace{-1.06em}_{_(\:\,_)}}}
\newcommand{\Ker}{\operatorname{Ker}}
\newcommand{\Irr}{\operatorname{Irr}}
\newcommand{\rk}{\operatorname{rk}}
\newcommand{\Hilb}{\operatorname{Hilb}}
\newcommand{\Spec}{\operatorname{Spec}}
\newcommand{\GQuot}{\mathrm{Quot}^G(\mathcal{H},h)}
\newcommand{\GL}{\mathrm{GL}}
\newcommand{\SL}{\mathrm{SL}}
\newcommand{\Coh}{\mathrm{Coh}}
\newcommand{\supp}{\operatorname{supp}}
\newcommand{\FF}{\mathcal{F}}
\newcommand{\OO}{\mathcal{O}}
\newcommand{\HH}{\mathcal{H}}
\newcommand{\KK}{\mathcal{K}}
\newcommand{\JJ}{\mathcal{J}}
\newcommand{\GG}{\mathcal{G}}
\newcommand{\NN}{\mathbb{N}}
\newcommand{\Ll}{\mathscr{L}}
\newcommand{\Gm}{\mathbb{G}_m}
\newcommand{\QQ}{\mathbb{Q}}
\newcommand{\RR}{\mathbb{R}}
\newcommand{\PP}{\mathbb{P}}
\newcommand{\ZZ}{\mathbb{Z}}
\newcommand{\CC}{\mathbb{C}}
\newcommand{\mut}{\mu_{\theta}}
\newcommand{\odd}{\operatorname{odd}}
\newcommand{\even}{\operatorname{even}}
\theoremstyle{plain}
\newtheorem{theorem}{Theorem}[section]
\newtheorem{lemma}{Lemma}[section]
\newtheorem{proposition}{Proposition}[section]
\newtheorem{corollary}{Corollary}[section]
\newtheorem{example}{Example}[section]
\newtheorem{hypothesis}{Hypothesis}[section]
\newtheorem*{theorem*}{Theorem}
\newtheorem{question}{Question}[section]
\newtheorem{warning}{Warning}[section]
\newtheorem*{proposition**}{Proposition A}
\newtheorem*{theorem**}{Theorem B}
\theoremstyle{definition}
\newtheorem{definition}{Definition}[section]
\theoremstyle{remark}
\newtheorem{remark}{Remark}[section]
\address{Ronan Terpereau \newline 
Former institution: Fachbereich Physik, Mathematik und Informatik, Johannes Gutenberg -- Universit\"at Mainz , 55099 Mainz, Germany \newline
 Present institution: Institut de Math\'ematiques de Bourgogne - UMR 5584 du CNRS, Universit\'e Bourgogne Franche-Comt\'{e}, F-21000 Dijon, France}
\email{\noindent ronan.terpereau@u-bourgogne.fr}
\address{Alfonso Zamora \newline
Former institution: Center for Mathematical Analysis, Geometry and Dynamical Systems, Instituto Superior T\'ecnico, Universidade de Lisboa, Av. Rovisco Pais, 1049-001 Lisboa, Portugal \newline
Present institution: Departamento Interfacultativo de Matem\'atica Aplicada y Estad\'istica, Facultad de Ciencias Econ\'omicas y Empresariales, Universidad CEU San Pablo, Juli\'an Romea 23, Madrid, Spain} 
\email{\noindent alfonso.zamorasaiz@ceu.es}
\keywords{$(G,h)$-constellation, stability condition, Harder-Narasimhan filtration, GIT quotient}
\subjclass[2010]
{14D20,14L24}
\begin{document}

\title[Stability conditions and related filtrations for constellations]{Stability conditions and related filtrations for $(G,h)$-constellations}

\author{Ronan Terpereau}
\author{Alfonso Zamora}

\begin{abstract}
Given an infinite reductive algebraic group $G$, we consider $G$-equivariant coherent sheaves with 
prescribed multiplicities, called $(G,h)$-constel\-lations, for which two stability notions arise. 
The first one is analogous to the $\theta$-stability defined for quiver representations by King \cite{Kin:1994} 
and for $G$-constellations by Craw and Ishii \cite{CI04}, but depending on infinitely many parameters. The second one 
comes from Geome\-tric Invariant Theory in the construction of a moduli space for $(G,h)$-constellations, and depends on some finite subset $D$ 
of the isomorphy classes of irreducible representations of $G$.  
We show that these two stability notions do not coincide, answering negatively a question raised in \cite{BT15}. 
Also, we construct Harder-Narasimhan filtrations for $(G,h)$-constellations with respect to both stability notions (namely, 
the $\mut$-HN and $\mu_D$-HN filtrations). 
Even though these filtrations do not coincide in general, we prove that they are strongly related: 
the $\mut$-HN filtration is a subfiltration of the $\mu_D$-HN filtration, and the polygons of the $\mu_D$-HN filtrations
converge to the polygon of the $\mut$-HN filtration when $D$ grows.
\end{abstract}

\maketitle

\tableofcontents

\section*{Introduction}
In moduli problems, we usually consider objects on which we impose a certain stability condition to 
be able to construct a moduli space, i.e., a space parametrizing stable or semistable objects. 
When using Geometric Invariant Theory (in the following GIT to abbreviate) to construct such a moduli space,
another notion of stability shows up, the so-called GIT-stability, and one has to work out the relation between those two stability notions
in order to apply GIT to construct the moduli space of (semi)stables objects we are interested in. In this article, we consider a new moduli problem 
treated in \cite{BT15}, where the objects are certain coherent sheaves for which the stability condition depends 
on infinitely many parameters and do not exactly match with the GIT-stability condition. We investigate in detail
the two stability conditions and notice remarkable phenomena, in particular at the 
level of the corresponding Harder-Narasimhan filtrations. Moreover, we answer several questions which remained unresolved in \cite{BT15}. 
 
Let $G$ be a complex reductive algebraic group, and let $X$ be an affine $G$-scheme of finite type. When $G$ is finite, Craw and Ishii \cite{CI04} 
generalized the notion of $G$-cluster on $X$. They defined a \emph{$G$-constellation} on $X$ as a $G$-equivariant coherent $\OO_X$-module $\FF$ 
with global sections $H^0(\FF)$ isomorphic to the regular representation of $G$ as a $G$-module. Then they defined a stability condition on 
$G$-constellations, namely the \emph{$\theta$-stability}, and they constructed the moduli space of $\theta$-(semi)stable $G$-constellations on $X$ 
by following ideas of King \cite{Kin:1994}. The key ingredient in this construction is the reformulation of the $\theta$-stability condition into 
a GIT-stability condition. 
Finally, they proved that minimal resolutions of singularities of certain quotients $X/G$ can be obtained as the moduli space of $\theta$-stable 
$G$-constellations on $X$ for some $\theta$.

Let us now assume that $G$ is infinite. In \cite{BT15}, Becker and the first-named author defined similar concepts and constructed the moduli space 
of $\theta$-stable $(G,h)$-constellations on $X$, where $h: \Irr G \to \NN_{\geq 0}$ is a function that assigns a non-negative integer to each 
irreducible $G$-module and replaces the regular representation in this new setting; see \S \ref{def_and_HN} for details. 
This moduli space, say $M_{\theta}(X)$, is a generalization of the invariant Hilbert scheme of Alexeev and Brion \cite{AB05}. 
Also, by analogy with the case where $G$ is a finite abelian subgroup of $\SL_3(\CC)$ (see \cite[Theorem 1.1]{CI04}), it is expected that 
any crepant resolution of singularities of the categorical quotient $X/\!/G$ is isomorphic to $M_{\theta}(X)$ for some $\theta$ (when such a resolution exists).

The methods used in \cite{CI04} to construct $M_{\theta}(X)$ no longer apply when $G$ is infinite since the $\theta$-stability depends now on 
infinitely many parameters (one for each irreducible $G$-module), and it is not clear whether the $\theta$-stability condition can still be 
expressed as a GIT-stability condition (which depends only on a finite number of parameters). Nevertheless, a GIT-stability condition was 
defined in \cite[\S 2]{BT15}, depending on a finite subset $D \subset \Irr G$, and it was proved that for $D$ big enough in $\Irr G$, the 
$\theta$-stability of a $(G,h)$-constellation implies its GIT-stability. The possible converse implication and the relations between 
$\theta$-semistability and GIT-semistability were addressed in \cite[\S 5]{BT15} but these questions remained unanswered at that time. 
In this article, we will see that those two stability conditions are actually different. 

\begin{proposition**} [\S \ref{relations} and \S \ref{examples}]
The notions of $\theta$-(semi)stability and GIT-(semi)sta\-bility mentioned above for $(G,h)$-constellations do not coincide.
\end{proposition**}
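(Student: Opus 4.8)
The plan is to prove a \emph{negative} result—that $\theta$-(semi)stability and GIT-(semi)stability do not coincide—so the natural strategy is to exhibit explicit counterexamples rather than to argue in general. Since the excerpt already records (from \cite{BT15}) that $\theta$-stability implies GIT-stability for $D$ large enough, the gap between the two notions can only manifest in one of two ways: either as a failure of the converse implication (a $(G,h)$-constellation that is GIT-stable for every reasonable $D$ but not $\theta$-stable), or as a discrepancy at the semistable level (an object that is $\theta$-semistable but GIT-unstable, or vice versa). I would therefore split the statement into these sub-claims and attack each with a dedicated example, aiming to keep $G$ and $X$ as simple as possible so that the representation theory and the combinatorics of $h$ remain tractable.

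First I would set up the smallest nontrivial testing ground: take $G$ to be an infinite reductive group whose irreducible representations are easy to enumerate and multiply—$G = \Gm$ (so $\Irr G \cong \ZZ$) or $G = \SL_2$ (so $\Irr G \cong \NN$)—acting on a low-dimensional affine space $X$, and choose $h$ supported on just a few weights. The point of keeping $\dim X$ and $\supp(h)$ small is that a $(G,h)$-constellation $\FF$ then has only finitely many isotypic pieces of interest, so that both the $\theta$-slope (an infinite sum $\sum_\rho \theta_\rho \cdot (\text{multiplicities})$) and the $\mu_D$-slope (the truncation of that sum to $\rho \in D$) become concrete finite expressions that I can compare directly. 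I would then hunt for a destabilizing $G$-equivariant subsheaf $\FF' \subset \FF$ whose multiplicity vector $h'$ has the feature that the ``tail'' contributions $\sum_{\rho \notin D} \theta_\rho \cdot (\text{mult of } \FF')$ are nonzero and of the sign that flips the inequality. Concretely, I would try to arrange $\theta$-parameters that decay slowly enough (over the infinitely many $\rho$) that the full $\theta$-sum has one sign while every finite truncation $\mu_D$ has the opposite sign, so that $\FF$ is, say, GIT-stable for all $D$ yet $\theta$-unstable (or the reverse).

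The technical heart of the construction—and what I expect to be the main obstacle—is controlling the interaction between the truncation at $D$ and the sign of the slope inequality simultaneously for the object \emph{and} for its candidate sub/quotient. Because the GIT-parameter is a genuine truncation of the infinite family $(\theta_\rho)_\rho$, I need the infinitely many parameters $\theta_\rho$ with $\rho \notin D$ to contribute a definite, non-vanishing amount to the true $\theta$-slope while remaining invisible to $\mu_D$; the difficulty is that as $D$ grows these tail contributions shrink, so I must choose the multiplicities of the destabilizing subsheaf so that the tail contribution is bounded away from zero uniformly, or else exploit the semistable boundary where a strict-versus-nonstrict inequality can be toggled by an arbitrarily small tail. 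I would handle this by making the decisive destabilizing subsheaf carry infinitely many nonzero isotypic multiplicities (which is possible precisely because $G$ is infinite and $h$ need only be finitely supported on the quotient data, not on the subobject data in the relevant sense), so that no finite $D$ can ever ``see'' the obstruction. Finally, I would verify in each example that the chosen $\FF$ is a genuine $(G,h)$-constellation (i.e. that $H^0(\FF)$ has the prescribed multiplicities and the relevant generation/finiteness hypotheses of \S\ref{def_and_HN} hold), compute both slopes explicitly, and read off the mismatch; assembling the semistable and stable discrepancies from these examples then yields the proposition.
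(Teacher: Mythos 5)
Your high-level plan --- exhibit explicit counterexamples over $G=\Gm$ and compare the infinite sum defining $\mut$ with its finite truncations $\mu_D$ --- is exactly the paper's route (\S \ref{examples}, Examples \ref{GITstablebutnotstable} and \ref{ex2}). However, the mechanism you propose for making the counterexamples work cannot be carried out. You want either a destabilizing subsheaf whose tail contribution $\sum_{\rho \notin D}\theta_\rho h'(\rho)$ stays ``bounded away from zero uniformly'' in $D$, or parameters $\theta_\rho$ decaying so slowly that the full $\theta$-sum has one sign while every truncation has the opposite sign. Neither is possible: any $(\OO_X,G)$-submodule $\FF'$ of a $(G,h)$-constellation has Hilbert function $h'\leq h$, so its tail is squeezed by $0 \leq \sum_{\rho \notin D}\theta_\rho h'(\rho) \leq S_D=\sum_{\rho \notin D}\theta_\rho h(\rho)$, and $S_D \to 0$ as $D$ grows, because $\sum_\rho \theta_\rho h(\rho)=0$ is a convergent series with only finitely many negative terms. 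Giving $\FF'$ infinitely many nonzero multiplicities, as you suggest, changes nothing (the paper's destabilizing subsheaf $I_1=(\overline{x})$ already has infinite support); the bound by $S_D$ holds regardless. This squeezing is precisely Proposition \ref{wideDD}: $\mu_D \to \mut$ uniformly over all subquotients, so any \emph{strict} $\mut$-inequality is eventually reproduced by $\mu_D$, and an $\FF$ that is ``$\theta$-unstable yet GIT-stable for all big $D$'' cannot exist. Note also that keeping $\supp h$ finite, as you propose, is counterproductive: taking $D=\supp h$ gives $S_D=0$ and hence $\mu_D=\mut$ on every subsheaf, so the two notions then coincide for $D$ big enough; the paper's examples require $h$ of infinite support ($h\equiv 1$ on $\ZZ$).

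The only viable mechanism is the one you mention only as a fallback: work at the strictly semistable boundary. The paper takes $\FF=\OO_X$ on $X=\Spec \CC[x,y]/(xy)$ with $D_-=\{0,1\}$, so that the \emph{unique} proper submodule generated in $D_-$ is $\FF'=(\overline{x})$, and chooses $\theta$ with $\theta(\FF')=0$ exactly, i.e. $\mut(\FF')=\mut(\FF)=0$. Then $\mu_D(\FF')$ reduces to a pure correction term, namely $\bigl(\sum_{\rho\notin D}\theta_\rho h'(\rho)-\tfrac{S_D}{d}\sum_{\rho\in D\setminus D_-}h'(\rho)/h(\rho)\bigr)/r(h')$, whose sign is a competition between the subsheaf's own tail and the redistributed total tail $S_D$. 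Loading the tail of $\theta$ on negative weights (invisible to $h'$, which is supported on positive weights) inflates $S_D$ and makes $\mu_D(\FF')<0$ for all big $D$: GIT-stable but not $\theta$-stable (Example \ref{GITstablebutnotstable}). Putting the whole positive tail of $\theta$ on positive weights makes $S_D$ equal to the subsheaf's tail, only partially cancelled by the redistribution, so $\mu_D(\FF')>0$ for arbitrarily big $D$: $\theta$-semistable but GIT-unstable (Example \ref{ex2}). If you replace your primary construction by this boundary analysis, your proof becomes the paper's; as written, the main step would fail.
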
   

This proposition answers negatively \cite[Question 5.2]{BT15} and implies that the GIT approach used in \cite{BT15} to construct 
$M_{\theta}(X)$ is unsuitable to construct the moduli space of $\theta$-semistable $(G,h)$-constellations on $X$; in particular, the answer to \cite[Question 5.1]{BT15} is also negative. 

Once we know that those two stability conditions do not coincide, it is natural to compare them. The first step in this article 
is to reformulate the $\theta$-stability and the GIT-stability in terms of slope stability conditions, giving rise to the $\mut$-stability 
and the $\mu_D$-stability (where the index $D$ is to emphasize the dependence on $D$). The advantage of dealing with these new stability conditions 
defined by slopes is that we can then construct for any $(G,h)$-constellation the so-called \emph{Harder-Narasimhan filtration} \cite{HN75}. Within the 
years, the latter has been proved to be an extremely useful tool in the study of properties of moduli spaces in algebraic geometry.
The Harder-Narasimhan filtration is defined recursively by considering at each step the maximal destabilizing subobject; see \S \ref{HN1} 
for a precise definition. In some sense, this filtration measures how far an object is from being semistable. Therefore, comparing stability 
conditions reduces to comparing the corresponding Harder-Narasimhan filtrations for each object. In \S \ref{relations3}, we will 
explain how to associate to each Harder-Narasimhan filtration a polygon which encodes the numerical data of the filtration. The next statement 
gathers our results.  

\begin{theorem**}
Let $G$ be an infinite reductive algebraic group acting on an affine scheme of finite type $X$, let $h: \, \Irr G \to \NN_{\geq 0}$ be a Hilbert 
function, and let $\FF$ be a $(G,h)$-constellation on $X$. Let $D \subset \Irr G$ be a finite subset satisfying Hypothesis \ref{hypoD}. 
Then the following holds:
\begin{enumerate}[(i)]

\item $\FF$ admits a $\mut$-Harder-Narasimhan filtration $\FF_{\bullet}$ (Theorem \ref{HNexists}) as well as a $\mu_D$-Harder-Narasimhan filtration 
$\GG_{\bullet}^{D}$ (Theorem \ref{HNDexists}).  

\item If the finite subset $D \subset \Irr G$ is big enough, then $\FF_{\bullet}$ is a subfiltration of $\GG_{\bullet}^{D}$ (Theorem \ref{HNterms}).
Moreover, $\GG_{\bullet}^{D}$ is a subfiltration of some Jordan-H\"older filtration of the $\mut$-semistable factors
of $\FF$ (Remark \ref{HNtoJH}).

\item Even though the $\mu_D$-Harder-Narasimhan filtration might not stabilize when $D \subset \Irr G$ grows (\S \ref{examples}), 
the sequence of polygons associated with $\left(\GG_{\bullet}^{D}\right)_{D \subset \Irr G}$ converges to the polygon associated with 
$\FF_{\bullet}$ when $D$ grows (Theorem \ref{convergence}).
\end{enumerate}    
\end{theorem**}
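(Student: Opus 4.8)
\emph{Overall plan and part (i).} Theorem B collects four results, which I would establish in the order (i)--(iii) and then assemble. For (i) I would place both $\mut$ and $\mu_D$ into a single slope-stability formalism on the abelian category of $(G,h)$-constellations, where a slope has the schematic shape $\mu(\FF') = \deg(\FF')/r(\FF')$ for additive numerical functions built from the multiplicities $m_\rho(\FF') = \dim \Hom_G(\rho, H^0(\FF'))$: for $\mut$ one weights \emph{all} $\rho \in \Irr G$ by the parameters $\theta_\rho$, whereas for $\mu_D$ only the $\rho \in D$ contribute. To invoke the axiomatic existence of Harder--Narasimhan filtrations (Theorems \ref{HNexists} and \ref{HNDexists}), the two things to verify are the seesaw identity for $\mu$ along short exact sequences, which is formal from additivity of the $m_\rho$, and the existence, for every $\FF$, of a maximal destabilizing subobject, i.e.\ a subobject of maximal slope containing every other subobject of that slope. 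The latter reduces to boundedness: the slopes of subobjects are bounded above, and the subobjects of maximal slope are closed under sums and admit a maximal element. For $\mu_D$ this is essentially the finite-dimensional King/quiver situation, since $\mu_D$ only sees the finitely many $m_\rho$ with $\rho \in D$, all bounded by $h$; for $\mut$ the same scheme applies once the infinitely many parameters are controlled, which is where Hypothesis \ref{hypoD} and the summability built into $\theta$ enter.

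\emph{Part (ii).} Writing $\FF_\bullet \colon 0 = \FF_0 \subset \cdots \subset \FF_\ell = \FF$ for the $\mut$-HN filtration, I would show that each $\FF_i$ occurs among the terms of $\GG_\bullet^{D}$ for $D$ large (Theorem \ref{HNterms}). The filtration $\FF_\bullet$ is pinned down by the finitely many strict inequalities $\mut(\FF_1/\FF_0) > \cdots > \mut(\FF_\ell/\FF_{\ell-1})$, and since $\mu_D(\FF') \to \mut(\FF')$ as $D$ grows for each of the finitely many subobjects involved, these strict inequalities persist once $D$ is big enough. Concretely, the maximal $\mut$-destabilizer $\FF_1$ then becomes $\mu_D$-semistable and $\mu_D$-destabilizing, hence a term of $\GG_\bullet^{D}$, and one induces on $\FF/\FF_1$. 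For Remark \ref{HNtoJH}, note that inside a single $\mut$-semistable factor $\FF_i/\FF_{i-1}$ all the intermediate $\mu_D$-terms share the same $\mut$-slope, so the filtration they induce on that factor is by $\mut$-semistable subobjects of equal slope, that is, a Jordan--H\"older filtration.

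\emph{Part (iii).} For the convergence I would use the description of the $\mut$-HN polygon as the maximal upper-convex polygon, in the plane with $x$-coordinate the cumulative mass $r$ and $y$-coordinate the cumulative $\theta$-degree, among the polygons of all filtrations. By (ii) the filtration $\GG_\bullet^{D}$ refines $\FF_\bullet$, so its polygon lies below that of $\FF_\bullet$, with the extra vertices dipping under the segments of the maximal polygon. As $D$ grows, the $\mu_D$-slopes of the subquotients inside each $\mut$-semistable factor converge to the common $\mut$-slope of that factor, which forces the inserted vertices to rise onto the corresponding segment; hence the $\mu_D$-polygons converge to the $\mut$-polygon (Theorem \ref{convergence}) even though the filtrations $\GG_\bullet^{D}$ themselves need not stabilize.

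\emph{Main obstacle.} The recurring difficulty is the infinitude of the parameters defining $\mut$: it invalidates the finite-dimensional GIT arguments available for $\mu_D$, so both the existence of a maximal $\mut$-destabilizer in (i) and the uniform estimates underlying the convergence in (iii) require extracting genuine finiteness --- bounding the multiplicities of candidate destabilizers and limiting the subobjects that can contribute polygon vertices --- from the constellation structure together with Hypothesis \ref{hypoD}.
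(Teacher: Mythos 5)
There are genuine gaps, in all three parts. For part (i), your plan rests on working ``on the abelian category of $(G,h)$-constellations'' and then invoking an axiomatic (Rudakov-style) existence theorem once the seesaw identity and a maximal destabilizer are checked. But (semi)stability here is tested only against $(\OO_X,G)$-submodules \emph{generated in $D_-$}, and these do not form an abelian category: the intersection $\FF'\cap\GG$ of two submodules generated in $D_-$ need not be generated in $D_-$. This is precisely why the paper cannot quote \cite{Rud:1997} and instead proves Proposition \ref{unique} by hand: the step you treat as formal (``subobjects of maximal slope are closed under sums'') is exactly where the difficulty sits, and it is resolved by replacing $\FF'\cap\GG$ with the submodule $\widetilde{\FF'\cap\GG}$ generated by its negative part and running the slope comparisons through the sequences \eqref{hi}, \eqref{h1}, \eqref{h2}. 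Also, the finiteness input for $\mut$ is not Hypothesis \ref{hypoD} (which only concerns the GIT embedding and plays no role in Theorem \ref{HNexists}) but Proposition \ref{finiteness}: only finitely many Hilbert functions occur among subobjects generated in $D_-$, which bounds the slopes and permits the choice of $\FF'$ maximal for the order \eqref{order}.

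For part (ii) your mechanism is false: you claim that for $D$ big enough the maximal $\mut$-destabilizer $\FF_1$ ``becomes $\mu_D$-semistable and $\mu_D$-destabilizing, hence a term of $\GG^D_\bullet$.'' Example \ref{ex4} refutes this: there the $\mut$-HN filtration is $0\subsetneq\FF_4\subsetneq\FF$, while for all large $N$ the $\mu_{D_N}$-HN filtration is $0\subsetneq\FF_2\subsetneq\FF_3\subsetneq\FF_4\subsetneq\FF$, so $\FF_4$ contains the subobject $\FF_2$ of strictly larger $\mu_{D_N}$-slope and is never $\mu_{D_N}$-semistable; had your claim held, uniqueness of HN filtrations would force $\GG^D_\bullet=\FF_\bullet$ for all large $D$, contradicting the very content of (iii) that $\GG^D_\bullet$ need not stabilize. (Note also that ``semistable and destabilizing'' does not imply ``HN term.'') The actual proof of Theorem \ref{HNterms} is different: one shows $\GG_1\subseteq\FF_1$ with $\mut(\GG_1)=\mut(\FF_1)$ via Proposition \ref{unik}, that $\FF_1/\GG_1$ then $\mu_D$-destabilizes $\FF/\GG_1$, and one iterates, the increasing chain $\GG_1\subsetneq\GG_2\subsetneq\cdots\subseteq\FF_1$ being forced to reach $\FF_1$ by Lemma \ref{noeth}; this needs the \emph{uniform} estimate of Proposition \ref{wideDD} over all subquotients together with the gap $\epsilon_0$ of \eqref{epsilon} — pointwise convergence on the finitely many fixed $\FF_i$, which is all you invoke, cannot control the a priori unknown terms $\GG_k$. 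Finally, in (iii), your geometric picture is also wrong: the $D$-polygon is built from $\mu_D$-weights $w^D_i=r(\GG_i)\mu_D(\GG_i)$, not from $\mut$-weights, so it need not lie below the $\theta$-polygon; in Example \ref{ex3} the $D_{N,\even}$-polygons lie strictly above it. The convergence statement is true, but it is proved by directly bounding $|f_D(r(\GG_l))-f_\theta(r(\GG_l))|$ using Theorem \ref{HNterms} (equality of the relevant $\mut$-slopes) and Proposition \ref{wideDD}, not by a sandwiching or maximality argument.
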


The fact that the $(G,h)$-constellations we consider here (those generated in $D_{-}$, see Definition \ref{geninD-}) 
do not form an abelian category prevents us from applying the results of \cite{Rud:1997} to obtain 
directly the existence of the Harder-Narasimhan filtrations. Actually, we have to substantially modify the classical proofs for existence and 
uniqueness of Harder-Narasimhan filtrations in our situation.

The paper is organized as follows. In \S \ref{def_and_HN} we introduce $(G,h)$-constellations and $\theta$-stability. 
Then we convert the $\theta$-stability into a slope stability condition, the $\mut$-stability, and we prove the existence and 
uniqueness of the $\mut$-Harder-Narasimhan filtration for any $(G,h)$-constellation. 

Then, \S \ref{GITsection} is devoted to recall the construction of the moduli space of $\theta$-stable $(G,h)$-constellations as 
in \cite{BT15}. When performing this construction by using GIT, a GIT-stability condition is introduced, which depends on a choice of a finite subset 
$D \subset \Irr G$. Then, as for the $\theta$-stability, we convert this GIT-stability condition into a slope stability condition, 
the $\mu_{D}$-stability, and we construct the $\mu_{D}$-Harder-Narasimhan filtration. 

In \S \ref{comparisonstab}, which is the heart of this article, we study closely the relations between $\theta$-stability and GIT-stability.
First, we summarize the implications between the different stability notions considered in this article and answer related questions raised in \cite[\S 5]{BT15}. Then, once we know that $\theta$-stability and GIT-stability do not coincide for $(G,h)$-constellations, it is natural to compare the corresponding 
Harder-Narasimhan filtrations. We make explicit the relations between these two filtrations and prove parts (ii) and (iii) of Theorem B. 

Finally, \S \ref{examples} illustrates Proposition A and Theorem B by providing examples of the different phenomena that can occur. \\

\textbf{Notation.}
Throughout this article we work over the field of complex numbers $\CC$. Let $G$ be an infinite reductive algebraic group. Then we denote by $\Irr G$ 
the set of isomorphy classes of irreducible $G$-modules $\rho: G \to \GL(V_\rho)$, and by $R(G)=\bigoplus_{\rho \in \Irr G} \NN \cdot \rho$ 
the representation monoid of $G$. An element of $R(G)$ identifies naturally with a function $h: \Irr G \to \NN$; we call such a function a 
\emph{Hilbert function}. Let $X$ be an affine $G$-scheme of finite type. We say that $\FF$ is an \emph{$(\OO_X,G)$-module} if $\FF$ is a 
$G$-equivariant coherent $\OO_X$-module whose module of global sections $H^0(\FF)$ is a $G$-module with finite multiplicities. We denote the category of $(\OO_X,G)$-modules by $\Coh^G(X)$. We say that $h$ is the Hilbert function of $\FF$ if the multiplicities of the $G$-module 
$H^0(\FF)$ are given by $h$.

Whenever the word \emph{(semi)stable} appears in the text, or the abbreviation \emph{(s)s}, two statements should be read: a first one for \emph{stable}
or \emph{s}, and a second one for \emph{semistable} or \emph{ss}. If they appear together with the symbols $\sgeq$ or $\sleq$, one should read
$>$ or $<$ with \emph{stable}, and $\geq$ or $\leq$ with \emph{semistable}.

\section{Constellations and \texorpdfstring{$\mu_\theta$}{mu-theta}-Harder-Narasimhan filtration}  \label{def_and_HN}

We fix once and for all an (possibly non-connected) infinite reductive algebraic group $G$, an affine $G$-scheme of finite type $X$, and a non-zero Hilbert function $h: \Irr G \to \NN$.

\subsection{Constellations and \texorpdfstring{$\theta$}{theta}-stability}  \label{first}
In this subsection we present the notions of $(G,h)$-constellation, $\theta$-stability, and $\mut$-stability.

\begin{definition}
A $(G,h)$\textit{-constellation} on $X$ is an $(\OO_X,G)$-module $\FF$ such that 
$$H^0(\FF)=\bigoplus_{\rho \in \Irr G}\FF_{\rho}\otimes V_{\rho} \cong \bigoplus_{\rho \in \Irr G} V_{\rho}^{h(\rho)}$$
as a $G$-module, i.e., the multiplicities of the $G$-module $H^0(\FF)$ are given by the Hilbert function $h$.
\end{definition}

Moduli spaces parametrizing $(G,h)$-constellations are constructed in \cite{BT15}.
As the set of all $(G,h)$-constellations on $X$ is too large in general to be parametrized by a scheme,
the moduli problem is restricted to consider $(G,h)$-constellations 
satisfying a certain stability condition, the $\theta$-stability, that we now introduce. 

\begin{definition}  \label{deftheta}
Let $\theta=(\theta_\rho)_{\rho \in \Irr G}$ be a sequence of rational numbers (which depends on the Hilbert function $h$) satisfying:
\begin{itemize}
\item $\theta_\rho < 0$ for only finitely many $\rho \in \Irr G$;
\item $\theta_\rho > 0$ for infinitely many $\rho \in \Irr G$;
\item If $h(\rho)=0$, then $\theta_\rho=0$ ; and
\item $\sum_{\rho \in \Irr G} \theta_\rho h(\rho)=0$.
\end{itemize}
Then we call \emph{stability function} $\theta \colon R(G) \to \RR \cup \{\infty\}$ the function defined by
$$\theta(W) := \langle\theta,h_{W}\rangle := \sum_{\rho \in \Irr G} \theta_{\rho}\cdot \dim W_{\rho},$$
where $W = \bigoplus_{\rho \in \Irr G} W_{\rho}\otimes V_{\rho}$ is the isotypic decomposition of $W$.\\
In order to consider $\theta$ as a function $\Coh^G(X) \to \RR \cup \{\infty\}$, we set
$$\theta(\FF) := \theta(H^0(\FF)) = \sum_{\rho \in \Irr G} \theta_{\rho}\cdot \dim \FF_{\rho}.$$
In particular, if $\FF$ is a $(G,h)$-constellation, then we have
\begin{equation*}  
\theta(\FF) = \sum_{\rho \in \Irr G} \theta_{\rho}h(\rho)=0.
\end{equation*}
\end{definition}

The choice of $\theta$ induces a decomposition
\begin{equation}  \label{def_D}
\Irr G = D_+ \sqcup D_0 \sqcup D_- \qquad \text{such that} \qquad
\theta_{\rho}\left\{ \begin{array}{ll}
> 0 \text{ if} & \rho \in D_+ \\
= 0 \text{ if} & \rho \in D_0 \\
< 0 \text{ if} & \rho \in D_-
\end{array}\right.
\end{equation}
It follows from the definition of $\theta$ that $D_-$ is finite, $D_+$ is infinite, and the sets $\supp h \cap D_-$ and $\supp h \cap D_+$ are non-empty, where \emph{$\supp h:=\{\rho \in \Irr G \ |\ h(\rho) \neq 0\}$}.

\begin{definition}
\label{geninD-}
Let $\theta$ be as in Definition \ref{deftheta}, and let $\FF$ be an $(\OO_X,G)$-module.
If $\FF$ is generated by its negative part $\bigoplus_{\rho \in D_-} \FF_{\rho} \otimes V_{\rho}$
as an $\OO_X$-module, then we say that $\FF$ \textit{is generated in} $D_-$.
\end{definition}

Even though $(G,h)$-constellations need not be generated in $D_{-}$ in general, we will only consider in this article those generated in $D_-$. 
The reason to do that comes from the GIT construction of the moduli spaces of $(G,h)$-constellations in \cite{BT15}; this will become clear with the 
introduction of the invariant Quot scheme in \S \ref{2.1}.

\begin{warning}
From now on, $(G,h)$-constellations are always assumed to be generated in $D_-$ (with respect to a given $\theta$ as in Definition \ref{deftheta}). On the other hand, $(\OO_X,G)$-modules are not assumed to be generated in $D_-$, except when we say so.
\end{warning}

\begin{definition}  \label{mut}
Let $\FF$ be an $(\OO_X,G)$-module with a non-zero negative part, but not necessarily generated in $D_-$.
Then we define
$$r(\FF)=\sum_{\rho \in D_-} \dim \FF_\rho \in \NN_{>0},$$
and the \emph{$\theta$-slope} of $\FF$, where $\theta$ is a stability function as in Definition \ref{deftheta}, by
$$\mu_\theta(\FF):=\frac{-\theta(\FF)}{r(\FF)} \in \RR.$$
\end{definition}

\begin{remark}
The reason why we defined the rank of an $(\OO_X,G)$-module as in Definition \ref{mut} will become clear in view of Lemma \ref{noeth}.
\end{remark}

We now define two stability conditions on $(\OO_X,G)$-modules generated in $D_-$, which will turn out to be equivalent for $(G,h)$-constellations.

\begin{definition} \label{definitionstability}
Let $\theta$ be as in Definition \ref{deftheta}, and let $\FF$ be an $(\OO_X,G)$-module generated in $D_-$.
\begin{enumerate}
\item $\FF$ is called $\theta$\textit{-(semi)stable} if $\theta(\FF) = 0$ and for every $(\OO_X,G)$-submodule $0 \neq \FF' \subsetneq \FF$ generated in $D_-$ we have
$$\theta(\FF') \sgeq 0\; .$$
\item $\FF$ is called $\mu_\theta$\textit{-(semi)stable} if for all $(\OO_X,G)$-submodule $0 \neq \FF' \subsetneq \FF$ generated in $D_-$ we have
$$\mu_\theta(\FF') \sleq \mu_\theta(\FF)\;.$$
\end{enumerate}
If $\FF$ is non $\theta$-semistable resp. non $\mut$-semistable, we say that $\FF$ is \emph{$\theta$-unstable} resp. \emph{$\mut$-unstable}. 
\end{definition}

\begin{lemma}  \label{comparison}
Let $\theta$ be as in Definition \ref{deftheta}, and let $\FF$ be an $(\OO_X,G)$-module generated in $D_-$.
Then $\FF$ is $\theta$-(semi)stable if and only if $\FF$ is $\mu_\theta$-(semi)stable and $\theta(\FF)=0$. 
In particular, the notions of $\theta$-(semi)stability and $\mu_\theta$-(semi)stability are equivalent for $(G,h)$-constellations.
\end{lemma}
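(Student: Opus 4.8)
The plan is to prove the equivalence of $\theta$-(semi)stability and $\mu_\theta$-(semi)stability for an $(\OO_X,G)$-module $\FF$ generated in $D_-$ with $\theta(\FF)=0$, and then specialize to $(G,h)$-constellations. The key computational observation I would establish first is an additivity/comparison identity relating the $\theta$-values and the slopes across a submodule and its quotient. Concretely, for any $(\OO_X,G)$-submodule $0 \neq \FF' \subsetneq \FF$ generated in $D_-$, I would compare the sign of $\theta(\FF')$ with the sign of $\mu_\theta(\FF') - \mu_\theta(\FF)$. Since $\theta(\FF)=0$ by hypothesis, we have $\mu_\theta(\FF) = -\theta(\FF)/r(\FF) = 0$, so $\mu_\theta(\FF') \sleq \mu_\theta(\FF)$ becomes simply $\mu_\theta(\FF') \sleq 0$, i.e. $-\theta(\FF')/r(\FF') \sleq 0$. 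Because $r(\FF') = \sum_{\rho \in D_-} \dim \FF'_\rho$ is a strictly positive integer (the submodule $\FF'$ being generated in $D_-$ and nonzero forces its negative part to be nonzero), dividing by $r(\FF')$ does not change the direction of the inequality, and $-\theta(\FF')/r(\FF') \sleq 0$ is equivalent to $\theta(\FF') \sgeq 0$.

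With this identity in hand the proof is essentially a sign-chasing argument. First I would note that the hypothesis $\theta(\FF)=0$ is exactly the vanishing condition appearing in the definition of $\theta$-stability, so it is harmless to carry it along. For the forward direction, assuming $\FF$ is $\theta$-(semi)stable, we have $\theta(\FF)=0$ and $\theta(\FF') \sgeq 0$ for every admissible $\FF'$; applying the equivalence $\theta(\FF') \sgeq 0 \iff \mu_\theta(\FF') \sleq \mu_\theta(\FF)$ just derived gives precisely $\mu_\theta$-(semi)stability together with the recorded fact $\theta(\FF)=0$. For the converse, assuming $\mu_\theta$-(semi)stability and $\theta(\FF)=0$, the same equivalence run backwards yields $\theta(\FF') \sgeq 0$ for all admissible $\FF'$, which with $\theta(\FF)=0$ is the definition of $\theta$-(semi)stability. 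The only subtlety to spell out is that the symbols $\sgeq$ and $\sleq$ must be matched correctly in the two cases (strict inequality for stable, non-strict for semistable), but since division by the positive integer $r(\FF')$ and negation both interact predictably with strict versus non-strict inequalities, both cases go through uniformly; this is exactly the convention fixed in the Notation paragraph.

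Finally, for the last sentence of the statement, I would invoke the fact that any $(G,h)$-constellation $\FF$ automatically satisfies $\theta(\FF) = \sum_{\rho} \theta_\rho h(\rho) = 0$, as recorded right after Definition \ref{deftheta}. Hence the extra hypothesis $\theta(\FF)=0$ in the first part is automatic for constellations, and the two stability notions coincide unconditionally in that case. The whole argument is short and the main point is really the single sign-comparison identity; I do not anticipate a genuine obstacle, only the need to be careful that $r(\FF')>0$ (so that the slope is well-defined and the inequality direction is preserved) and that the $\sgeq$/$\sleq$ bookkeeping for the stable and semistable cases is handled simultaneously rather than conflated.
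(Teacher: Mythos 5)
Your proposal is correct and is essentially the paper's proof: the paper simply states that the lemma ``follows immediately from Definition \ref{definitionstability},'' and your sign-chasing argument (using $\theta(\FF)=0 \Rightarrow \mu_\theta(\FF)=0$, the positivity of $r(\FF')$, and the equivalence $\theta(\FF')\sgeq 0 \iff \mu_\theta(\FF')\sleq \mu_\theta(\FF)$) is precisely the verification being left implicit there. Your observation that $\theta(\FF)=0$ holds automatically for $(G,h)$-constellations likewise matches the remark following Definition \ref{deftheta}.
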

\begin{proof}
This follows immediately from Definition \ref{definitionstability}.
\end{proof}

The $\theta$-stability condition is the stability condition used in \cite{BT15} to prove 
the existence of a moduli space of stable $(G,h)$-constellations; see \cite[Theorem 4.3]{BT15}. 
The reason for considering $\mu_{\theta}$-stability instead of $\theta$-stability --which coincide for 
$(G,h)$-constellations by Lemma \ref{comparison}-- is that this reformulation in terms of slopes will allow us to talk about Harder-Narasimhan filtrations.

\subsection{\texorpdfstring{$\mu_\theta$}{mu-theta}-Harder-Narasimhan filtration} \label{HN1}

In this subsection, we construct the $\mut$-Harder-Narasimhan filtration for a $(G,h)$-constellation (Theorem \ref{HNexists}). 
We follow the classical treatment, see for instance \cite[\S 1.3]{HL10}, but conveniently adapted to our situation. We first show the existence of a unique maximal destabilizing subobject (Proposition \ref{unique}), and then we proceed by induction to prove the existence and uniqueness (which is a consequence of Proposition \ref{unik}) of the $\mut$-Harder-Narasimhan filtration. 

\begin{theorem} \label{HNexists}
Let $\mut$ be as in Definition \ref{mut}, and let $\FF$ be a $(G,h)$-constellation. 
Then $\FF$ has a unique filtration
$$0\subsetneq \FF_{1}\subsetneq \FF_{2}\subsetneq\cdots\subsetneq \FF_{t}\subsetneq \FF_{t+1}=\FF$$
verifying
\begin{enumerate}[(i)]
 \item each $\FF_i$ is an $(\OO_X,G)$-submodule generated in $D_-$;
 \item each quotient $\FF^{i}:=\FF_{i} / \FF_{i-1}$ is $\mut$-semistable; and
 \item the slopes of the quotients are strictly decreasing
 $$\mut(\FF^{1})>\mut(\FF^{2})>\cdots >\mut(\FF^{t})>\mut(\FF^{t+1}).$$
\end{enumerate}
We call this filtration the \emph{$\mu_\theta$-Harder-Narasimhan filtration} ($\mu_\theta$-HN filtration for short) of $\FF$. 
The integer $t+1$ is called the \emph{length} of the filtration.
\end{theorem}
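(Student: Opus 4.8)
The plan is to follow the classical inductive construction of the Harder--Narasimhan filtration, e.g. \cite[\S 1.3]{HL10}, but to replace its two pillars---boundedness of slopes and existence of a maximal destabilizing subobject---by statements tailored to our non-abelian category. The starting observation is a finiteness property peculiar to objects generated in $D_-$: since $D_-$ is finite and each multiplicity $\dim \FF_\rho = h(\rho)$ is finite, the negative part $\bigoplus_{\rho \in D_-} \FF_\rho \otimes V_\rho$ of a $(G,h)$-constellation $\FF$ is finite-dimensional, and any $(\OO_X,G)$-submodule $\FF'$ generated in $D_-$ is completely determined by its negative part, a $G$-submodule of $\bigoplus_{\rho \in D_-} \FF_\rho \otimes V_\rho$. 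Thus such submodules are parametrized by the $\CC$-points of the product of Grassmannians $\prod_{\rho \in D_-} \Grass(\FF_\rho)$, and $r(\FF')$ ranges over the finite set $\{0,1,\dots,r(\FF)\}$. Using that $\theta_\rho < 0$ exactly on $D_-$ and $\theta_\rho \geq 0$ off $D_-$, together with $\dim \FF'_\rho \leq h(\rho)$, one checks that $-\theta(\FF') = -\sum_{\rho \in D_-}\theta_\rho \dim \FF'_\rho - \sum_{\rho \in D_+} \theta_\rho \dim \FF'_\rho$ is bounded above by the constant $-\sum_{\rho \in D_-}\theta_\rho h(\rho)$, whence $\mut(\FF') = -\theta(\FF')/r(\FF')$ is bounded above over all nonzero submodules generated in $D_-$. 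This boundedness substitutes for the Noetherian argument of the classical proof and is what allows a subobject of maximal slope to exist.

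Granting this, the first main step is the existence and uniqueness of a maximal destabilizing subobject $\FF_1 \subseteq \FF$ (Proposition \ref{unique}): a nonzero submodule generated in $D_-$, $\mut$-semistable, of maximal $\mut$-slope $\mu_{\max}$, and containing every submodule generated in $D_-$ whose slope attains $\mu_{\max}$. The construction is the usual one: pick the maximal attainable slope $\mu_{\max}$, and among the submodules realizing it pick $\FF_1$ with $r(\FF_1)$ maximal. The heart of the argument is a \emph{seesaw} additivity lemma for $\mut$ along short exact sequences of $(\OO_X,G)$-modules---valid because $X$ is affine, so $H^0$ is exact and both $\theta$ and $r$ are additive---together with the fact that the sum $\FF' + \FF''$ and the intersection $\FF' \cap \FF''$ of two submodules generated in $D_-$ are again generated in $D_-$ (which we check on the finite-dimensional negative parts). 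These let one run the standard comparison: if $\FF'$ also attains $\mu_{\max}$, then $\FF_1 + \FF'$ does too and has at least as large $r$, forcing $\FF' \subseteq \FF_1$; and $\FF_1$ is semistable because a destabilizing subobject would itself exceed $\mu_{\max}$. Uniqueness is then immediate from the maximality built into the definition.

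With $\FF_1$ in hand we build the filtration by induction on $r(\FF)$. If $\FF$ is $\mut$-semistable we take the trivial filtration $0 \subsetneq \FF$ ($t=0$). Otherwise $\FF_1 \subsetneq \FF$, and since $\FF_1$ is generated in $D_-$ the quotient $\FF/\FF_1$ is again an $(\OO_X,G)$-module generated in $D_-$, with strictly smaller negative part, so $0 < r(\FF/\FF_1) < r(\FF)$. The induction hypothesis supplies a $\mut$-HN filtration of $\FF/\FF_1$, whose terms pull back to a chain $\FF_1 \subsetneq \FF_2 \subsetneq \cdots \subsetneq \FF_{t+1}=\FF$; here each preimage $\FF_i$ is again generated in $D_-$ precisely because $\FF_1$ is and its negative part sits inside that of $\FF_i$. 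For $i \geq 2$ the third isomorphism $\FF_i/\FF_{i-1} \cong (\FF_i/\FF_1)/(\FF_{i-1}/\FF_1)$ shows that $\FF^i$ is a graded piece of the filtration of $\FF/\FF_1$, so its $\mut$-semistability and the inequalities (ii), (iii) are inherited from the quotient. The remaining inequality $\mut(\FF^1)=\mut(\FF_1) > \mut(\FF^2)$ says exactly that $\FF_1$ strictly dominates the maximal destabilizing subobject of $\FF/\FF_1$, which follows from the maximality of $\FF_1$ via the seesaw lemma. Uniqueness of the whole filtration is a formal consequence of the uniqueness of the maximal destabilizing subobject at each stage (Proposition \ref{unik}).

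The main obstacle, as emphasized in the introduction, is that the category of $(\OO_X,G)$-modules generated in $D_-$ is not abelian, so the axiomatic machinery of \cite{Rud:1997} does not apply and each of the above operations must be revisited by hand. The most delicate point is genuinely Proposition \ref{unique}: one must guarantee that the supremum of slopes is actually \emph{attained} and that the chosen $\FF_1$ is \emph{semistable} rather than merely of maximal slope, neither of which is automatic without an ambient abelian structure. Both reduce, through the seesaw lemma and the stability of ``generated in $D_-$'' under sums and intersections, to the finite-dimensionality of the negative part and the Grassmannian parametrization established in the first paragraph. Once these non-abelian subtleties are absorbed into Propositions \ref{unique} and \ref{unik}, the inductive skeleton above goes through, yielding existence, the required properties (i)--(iii), finiteness of the length $t+1 \leq r(\FF)$, and uniqueness.
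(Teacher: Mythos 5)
Your skeleton---seesaw lemma, existence and uniqueness of a maximal destabilizing subobject, induction on $r(\FF)$, uniqueness of the filtration reduced to uniqueness of its first term---is the same as the paper's, but the core step (the analogue of Proposition \ref{unique}) is not actually established, because your ``standard comparison'' rests on a false closure property: you claim that the intersection $\FF' \cap \GG$ of two $(\OO_X,G)$-submodules generated in $D_-$ is again generated in $D_-$, ``checked on the finite-dimensional negative parts.'' This fails in general. The negative part of $\FF'\cap\GG$ is $\bigoplus_{\rho\in D_-}(\FF'_\rho\cap\GG_\rho)$, but the submodule it generates can be a \emph{proper} subsheaf of $\FF'\cap\GG$, and it can even be zero while $\FF'\cap\GG\neq 0$: two submodules generated in $D_-$ with transverse negative parts can still share sections in the $D_+$-isotypic components. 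For instance, take $G=\Gm$ acting on $X=\Spec\CC[x]$ with $x$ of weight $1$, let $D_-=\{0\}$, and let $\FF=\OO_X e_1\oplus\bigl(\OO_X/(x)\bigr)e_2$ with $e_1,e_2$ of weight $0$; then $\OO_X\cdot e_1$ and $\OO_X\cdot(e_1+e_2)$ are generated in $D_-$, but their intersection is $x\CC[x]\,e_1\neq 0$, whose negative part vanishes. In that situation $\mut(\FF'\cap\GG)$ is not even defined (Definition \ref{mut} requires a non-zero negative part), so the seesaw lemma cannot be applied to $0\to\FF'\cap\GG\to\FF'\oplus\GG\to\FF'+\GG\to 0$; and when the negative part is non-zero but the intersection is not generated in $D_-$, the maximality of $\mu_{\max}$ tells you nothing about $\mut(\FF'\cap\GG)$, since maximality quantifies only over submodules generated in $D_-$. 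This is precisely the non-abelian difficulty the paper's proof is built around: it splits into the case where the negative part of $\FF'\cap\GG$ vanishes (and then compares $\theta$ and $r$ of $\FF'+\GG$ and $\FF'\oplus\GG$ directly, term by term over $D_-$ and $D_+$), and the case where it does not (and then introduces the subsheaf $\widetilde{\FF'\cap\GG}$ generated by that negative part, proves $\mut(\FF'\cap\GG)\leq\mut(\widetilde{\FF'\cap\GG})\leq\mu_{\max}$, and applies the seesaw property only to short exact sequences all of whose terms have non-zero negative part). The same device is needed again in the uniqueness step (the paper's Proposition \ref{unik} works with $\widetilde{\FF_1\cap\GG}$), which you dismiss as a formal consequence.

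There is a second, smaller gap: you replace the paper's finiteness input (Proposition \ref{finiteness}, i.e.\ \cite[Prop.\ 1.9]{BT15}: only finitely many Hilbert functions occur among submodules generated in $D_-$) by the observation that slopes are bounded above. Boundedness alone does not give a \emph{maximal} slope: $-\theta(\FF')$ involves the infinitely many weights $\theta_\rho>0$, $\rho\in D_+$ (in the paper's examples these are of the form $2^{-k}$), so a priori the set of attained slopes is infinite and could fail to contain its supremum. Your Grassmannian parametrization could in principle repair this (via semicontinuity of the Hilbert function of $\OO_X\cdot W$ in $W$ and Noetherian induction on the parameter space), but that argument is nowhere carried out in the proposal---and it would amount to reproving the finiteness statement that the paper simply cites; as written, ``pick the maximal attainable slope $\mu_{\max}$'' is unjustified.
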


Let us note that the $\mu_\theta$-HN filtration of a $(G,h)$-constellation $\FF$ is trivial if and only if $\FF$ is $\mut$-semistable. 
Explicit examples of $(G,h)$-constellations with non trivial $\mut$-HN filtration will be computed in \S \ref{examples}. 

The proof of Theorem \ref{HNexists} is postponed till the end of the subsection. It is clear from the definition that every $(\OO_X,G)$-module generated in $D_-$ is a $(G,\tilde h)$-constellation for a certain Hilbert function $\tilde h$. We will actually prove Theorem \ref{HNexists} for an arbitrary $(\OO_X,G)$-module generated in $D_-$ even though we formulate it for $(G,h)$-constellations which are the objects we are interested in. 

First, we prove a lemma --that we call \emph{seesaw property} following the terminology in \cite{Rud:1997}-- which relates slopes of objects in exact sequences. 

\begin{lemma} \emph{(seesaw property)} \label{ineq1}
Given a short exact sequence 
$$0\rightarrow \FF'\rightarrow \FF\rightarrow \FF''\rightarrow 0$$ 
of $(\OO_X,G)$-modules, 
all with non-zero negative part, we have
$$\mu_{\theta}(\FF') \leq \mu_{\theta}(\FF)\Longleftrightarrow \mu_{\theta}(\FF') \leq \mu_{\theta}(\FF'')\Longleftrightarrow
\mu_{\theta}(\FF) \leq \mu_{\theta}(\FF'')\; .$$
Moreover, if any of the inequalities is an equality, the other two are also equalities. 
\end{lemma}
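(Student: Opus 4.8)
The plan is to reduce the statement to the elementary comparison between two fractions and their mediant, after first establishing that both $r$ and $\theta$ are additive on short exact sequences. The key input is the exactness of global sections: since $X$ is affine, the functor $H^0$ is exact on (quasi-)coherent sheaves, so the given short exact sequence of $(\OO_X,G)$-modules yields a short exact sequence of $G$-modules
$$0 \to H^0(\FF') \to H^0(\FF) \to H^0(\FF'') \to 0.$$
Because $G$ is reductive, every $G$-module is completely reducible, and hence the passage to the $\rho$-isotypic multiplicity space $W \mapsto W_\rho$ is an exact functor on $G$-modules. Consequently $\dim \FF_\rho = \dim \FF'_\rho + \dim \FF''_\rho$ for every $\rho \in \Irr G$. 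Summing over $\rho \in D_-$ gives $r(\FF) = r(\FF') + r(\FF'')$, and summing $\theta_\rho \cdot \dim(\,\cdot\,)_\rho$ over all $\rho \in \Irr G$ gives $\theta(\FF) = \theta(\FF') + \theta(\FF'')$.

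Next I would abbreviate $a = -\theta(\FF')$, $b = -\theta(\FF'')$, $p = r(\FF')$, and $q = r(\FF'')$, noting that $p > 0$ and $q > 0$ by the hypothesis that $\FF'$ and $\FF''$ have non-zero negative part. By the additivity just established, $\mu_{\theta}(\FF') = a/p$, $\mu_{\theta}(\FF'') = b/q$, and $\mu_{\theta}(\FF) = (a+b)/(p+q)$, so the three displayed inequalities become the classical mediant inequalities. Clearing the positive denominators, each of $a/p \leq (a+b)/(p+q)$, $a/p \leq b/q$, and $(a+b)/(p+q) \leq b/q$ is equivalent, by a one-line cross-multiplication, to the single inequality $aq \leq bp$. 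Hence the three inequalities are equivalent. The very same computation shows that the equality $aq = bp$ is equivalent to equality in each of the three comparisons, which establishes the last assertion of the lemma.

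There is essentially no serious obstacle here: the only point that genuinely requires care is the justification of additivity, which relies on the exactness of $H^0$ over the affine scheme $X$ together with the complete reducibility of $G$-modules guaranteeing that taking multiplicity spaces is exact. Once additivity of $r$ and $\theta$ is in hand, the remainder is purely the formal statement that a fraction with positive denominator, formed as the mediant of two fractions with positive denominators, lies between them, with the boundary cases governed by the same scalar inequality.
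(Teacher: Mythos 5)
Your proof is correct and follows essentially the same route as the paper's: establish additivity of $\theta$ and $r$ on the short exact sequence (using reductivity of $G$, with your added justification via exactness of $H^0$ on the affine $X$), then reduce all three inequalities to a single cross-multiplied inequality between the two fractions. The only cosmetic difference is that you package the arithmetic as the classical mediant inequality with the common pivot $aq \leq bp$, whereas the paper cross-multiplies the equivalences pairwise; the substance is identical.
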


\begin{proof}
Denote by $h'$, $h$, and $h''$ the Hilbert functions of $\FF'$, $\FF$, and $\FF''$ respectively. Since $G$ is a reductive group, it is clear that $h(\rho)=h'(\rho)+h''(\rho)$ for all $\rho\in \Irr G$. Then we have $\theta(\FF)=\theta(\FF')+\theta(\FF'')$ and
$r(\FF)= r(\FF')+r(\FF'')$. The assumption of having a non-zero negative part guarantees that $r(\FF)$, $r(\FF')$, and $r(\FF'')$ are non-zero.
It follows that
$$\mu_{\theta}(\FF)=\frac{-\theta(\FF)}{r(\FF)}=\frac{-\theta(\FF')-\theta(\FF'')}{r(\FF')+r(\FF'')}\; , $$
whence
$$-\theta(\FF)r(\FF')+\theta(\FF')r(\FF)=-\theta(\FF'')r(\FF)+\theta(\FF)r(\FF'')\; ,$$
which implies
$$-\theta(\FF)r(\FF')+\theta(\FF')r(\FF)\geq 0 \Longleftrightarrow -\theta(\FF'')r(\FF)+\theta(\FF)r(\FF'')\geq 0.$$
The last equivalence turns out to be
$$\mu_{\theta}(\FF') \leq \mu_{\theta}(\FF)\Longleftrightarrow
\mu_{\theta}(\FF) \leq \mu_{\theta}(\FF'')\; .$$
A similar treatment shows the equivalence of these two inequalities with the other one. 
Finally, we note that all implications still hold if we replace equalities by inequalities. 
\end{proof}

We now recall a result expressing the finiteness of the different functions which can appear as Hilbert functions of subobjects. 

\begin{proposition} \label{finiteness} \emph{(\cite[Prop. 1.9]{BT15})}
Let $\tilde h$ be an arbitrary Hilbert function. 
There is a finite set of Hilbert functions $\{h_1,\ldots,h_N\}$ such that for any $(G,\tilde h)$-constellation $\FF$ and any 
$(\OO_X,G)$-submodule $\FF' \subseteq \FF$ generated in $D_-$, the Hilbert function $h'$ of $\FF'$ is one of the $h_1,\ldots,h_N$.
\end{proposition}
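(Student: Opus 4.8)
The plan is to reduce the statement to a boundedness result for a finite-type family of $(\OO_X,G)$-modules. First I would exploit the submodule condition: since $\FF' \subseteq \FF$, each isotypic multiplicity satisfies $h'(\rho) \leq \tilde h(\rho)$, so in particular the restriction of $h'$ to the finite set $D_-$ takes only finitely many values. The decisive structural observation is that an $(\OO_X,G)$-submodule $\FF' \subseteq \FF$ which is generated in $D_-$ is completely recovered from its negative part: writing $N := \bigoplus_{\rho \in D_-} \FF_\rho \otimes V_\rho$ for the (fixed, finite-dimensional) negative part of $\FF$ and $N' := \bigoplus_{\rho \in D_-} \FF'_\rho \otimes V_\rho$ for that of $\FF'$, we have $N' \subseteq N$ as $G$-modules and $\FF' = \OO_X \cdot N'$. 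Thus the Hilbert function $h'$ is a function of the single datum of a $G$-submodule $N'$ of a fixed finite-dimensional $G$-module, and it suffices to bound the number of Hilbert functions of the modules $\OO_X \cdot N'$ as $N'$ ranges over all $G$-submodules of $N$ (and $\FF$ over all $(G,\tilde h)$-constellations).

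Next I would organise these data into an algebraic family over a base of finite type. For a fixed $\FF$, the $G$-submodules $N' \subseteq N$ are parametrised by a product over $\rho \in D_-$ of (disjoint unions of) Grassmannians of the multiplicity spaces, a projective scheme of finite type; to obtain the uniform statement over all $\FF$ I would work relatively over the invariant Quot scheme of \cite{BT15} (which is of finite type and parametrises the $(G,\tilde h)$-constellations, see \S \ref{2.1}), forming the relative Grassmannian $B$ of $G$-submodules of the universal negative part. Over $B$ there is then a universal $G$-submodule $\mathcal{N}'$ and a universal family $\mathcal{F}' = \OO_X \cdot \mathcal{N}'$ of $(\OO_X,G)$-modules, and the quantity to control is the image of the map sending a point $b \in B$ to the Hilbert function of the fibre $\mathcal{F}'_b$.

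To see that this image is finite I would apply a flattening stratification: by generic flatness and Noetherian induction, $B$ decomposes into finitely many locally closed reduced subschemes over which $\mathcal{F}'$ is flat. On such a stratum $H^0(\mathcal{F}')$ is flat over the base, and since $G$ is reductive in characteristic zero the formation of the $\rho$-isotypic component is exact and commutes with base change; hence each multiplicity $\dim \FF'_\rho$ is locally constant along the stratum. Consequently $h'$ is constant on each connected component of each stratum, so only finitely many Hilbert functions occur, which is the assertion.

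The main obstacle is the finite-type and family aspects rather than the linear-algebraic reduction: one must genuinely invoke that the invariant Quot scheme of \cite{BT15} (itself a relative of the invariant Hilbert scheme of \cite{AB05}) is of finite type, and I should be mindful that this is the very input BT15 themselves use, so the cleanest self-contained version runs the stratification argument for a fixed $\FF$ and then upgrades to uniformity via boundedness of the family of all $(G,\tilde h)$-constellations. I would also verify carefully the technical point that flatness of a family of $(\OO_X,G)$-modules forces the isotypic multiplicities to be locally constant — this is the defining feature of invariant Hilbert schemes and is exactly what converts flatness into constancy of the Hilbert function — as well as checking that $\OO_X \cdot \mathcal{N}'$ indeed defines an honest coherent family to which generic flatness applies.
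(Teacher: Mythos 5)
Note first that the paper contains no proof of Proposition \ref{finiteness}: it is imported verbatim from \cite[Prop.~1.9]{BT15}, so the comparison is against that reference rather than an internal argument. Your proposal is, in substance, the standard boundedness argument underlying that citation: reduce a submodule generated in $D_-$ to its generating graded subspace $N'=\bigoplus_{\rho\in D_-}A'_\rho\otimes V_\rho$ of the negative part (correct, since the $V_\rho$ are pairwise non-isomorphic irreducibles, every $G$-submodule of the negative part has this shape), parametrize the pairs $(\FF,A')$ by a finite-type base --- the relative Grassmannians over Jansou's invariant Quot scheme --- and stratify by flatness so that the Hilbert function is locally constant on finitely many strata. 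Two sanity checks come out in your favour: there is no circularity, because the only external input is quasi-projectivity of $\GQuot$, which is Jansou's theorem \cite{Jan:2006} and independent of \cite{BT15}; and the quantification over all $\FF$ is covered because, under the paper's standing Warning, $(G,\tilde h)$-constellations are generated in $D_-$ and hence arise as points of the invariant Quot scheme via the construction of \S\ref{origin}. (Moreover, the paper only ever applies the proposition to one fixed $\FF$ at a time, e.g.\ in Proposition \ref{unique}, so your fixed-$\FF$ fallback already suffices for everything downstream.)

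The two technical points you flag are genuine, and both close by standard means, so they do not constitute a gap. For the family aspect: flatten not (only) $\mathcal{F}'$ but the quotient $\mathcal{G}:=\mathcal{Q}/\mathcal{F}'$ of the universal constellation by the universal generated subfamily; on a stratum where $\mathcal{G}$ is flat, the sequence $0\to\mathcal{F}'\to\mathcal{Q}\to\mathcal{G}\to 0$ stays exact after base change to any point $b$, so the fibre of $\mathcal{F}'$ really is the submodule $\OO_X\cdot A'_b\subseteq\FF_b$ whose Hilbert function you are counting --- flatness of $\mathcal{F}'$ alone controls the fibres of the family, which could a priori differ from these submodules. For local constancy of multiplicities: since isotypic decomposition is exact for reductive $G$ in characteristic zero, $\mathcal{F}'_\sigma$ embeds in $\mathcal{Q}_\sigma$, which is locally free of rank $\tilde h(\sigma)$ by the very definition of the invariant Quot functor; hence each $\mathcal{F}'_\sigma$ (and $\mathcal{G}_\sigma$) is coherent, and coherent plus flat gives locally free, while pushforward along the affine projection and the Reynolds-operator splitting commute with base change, so $\dim(\OO_X\cdot A'_b)_\sigma$ is locally constant on each connected component of each stratum. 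Finitely many strata, each with finitely many components over a Noetherian base, yields the finite list $h_1,\dots,h_N$, completing your argument.
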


Among the set of Hilbert functions, there exists a partial order defined by:
\begin{equation} \label{order}
h_1 \geq h_2 \Longleftrightarrow \forall \rho \in \Irr G,\ h_1(\rho) \geq h_2(\rho).
\end{equation}
Let us note the following basic facts which will be very useful in the remaining of this subsection: 
If there is an inclusion of $(\OO_X,G)$-modules $\FF_1 \subseteq \FF_2$ with Hilbert functions $h_1$ and $h_2$ respectively, then $h_1 \leq h_2$. 
Moreover, if $\FF_1 \subseteq \FF_2$, then $\FF_1=\FF_2$ if and only if $h_1=h_2$.  

The next result guarantees the existence of a unique maximal $\mu_{\theta}$-destabilizing subobject for $(\OO_X,G)$-modules generated in $D_-$.

\begin{proposition}  \label{unique}
Let $\FF$ be an $(\OO_X,G)$-module generated in $D_-$.
Then there exists a unique $(\OO_X,G)$-submodule $\FF' \subseteq \FF$ generated in $D_-$ such that:
\begin{enumerate}[(i)]
\item if $0 \neq \GG \subseteq \FF$ is generated in $D_-$, then $\mut(\GG) \leq \mut(\FF')$; and
\item if $0 \neq \GG \subseteq \FF$ is generated in $D_-$  and $\mut(\GG)=\mut(\FF')$, then $\GG \subseteq \FF'$.
\end{enumerate}
\end{proposition}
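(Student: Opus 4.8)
The plan is to prove the existence and uniqueness of the maximal destabilizing subobject by a standard argument adapted to our non-abelian setting, relying crucially on the finiteness result of Proposition \ref{finiteness} to guarantee that suprema of slopes are actually attained. First I would set
$$\mu_{\max} := \sup \left\{ \mut(\GG) \ \middle|\ 0 \neq \GG \subseteq \FF \text{ generated in } D_- \right\},$$
and observe that this supremum is finite and attained: by Proposition \ref{finiteness}, the Hilbert functions $h'$ of submodules $\GG \subseteq \FF$ generated in $D_-$ range over a finite set $\{h_1,\dots,h_N\}$, and since $\mut(\GG) = -\theta(\GG)/r(\GG)$ depends only on the Hilbert function $h'$ of $\GG$, the set of slopes $\{\mut(\GG)\}$ is finite. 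Hence $\mu_{\max}$ is a maximum, realized by at least one submodule.

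Next I would prove that among all submodules attaining $\mu_{\max}$, there is a unique maximal one (with respect to inclusion), which will be the desired $\FF'$. The key step is a sum lemma: if $\GG_1, \GG_2 \subseteq \FF$ are generated in $D_-$ with $\mut(\GG_1) = \mut(\GG_2) = \mu_{\max}$, then $\GG_1 + \GG_2 \subseteq \FF$ is again generated in $D_-$ and satisfies $\mut(\GG_1 + \GG_2) = \mu_{\max}$. To see this, I would consider the short exact sequence
$$0 \rightarrow \GG_1 \cap \GG_2 \rightarrow \GG_1 \oplus \GG_2 \rightarrow \GG_1 + \GG_2 \rightarrow 0$$
and apply the seesaw property (Lemma \ref{ineq1}) together with the maximality of $\mu_{\max}$. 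Indeed, since $\GG_1 \cap \GG_2 \subseteq \FF$ is generated in $D_-$ (one should check this, or replace it by the submodule it generates), we have $\mut(\GG_1 \cap \GG_2) \leq \mu_{\max}$; the additivity of $\theta$ and $r$ forces $\mut(\GG_1 + \GG_2) \geq \mu_{\max}$, and maximality gives equality. Iterating over the finitely many submodules attaining $\mu_{\max}$ and using that Hilbert functions strictly increase under proper inclusion, I obtain a unique largest such submodule $\FF'$; this uniqueness comes from the fact that the sum of any two maximizers is again a maximizer containing both, so there is a well-defined maximal element. Properties (i) and (ii) then follow directly: (i) is the definition of $\mu_{\max}$, and (ii) holds because any $\GG$ with $\mut(\GG) = \mu_{\max}$ is a maximizer, hence contained in the maximal one $\FF'$ by construction.

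The main obstacle I anticipate is the verification that the relevant constructions stay within the class of objects \emph{generated in} $D_-$, since $\Coh^G(X)$ restricted to such objects is not abelian. In particular, the intersection $\GG_1 \cap \GG_2$ and the image quotient $\GG_1 + \GG_2$ must be handled carefully: while $\GG_1 + \GG_2$ is clearly generated in $D_-$ (being a sum of submodules generated in $D_-$), the intersection $\GG_1 \cap \GG_2$ need not be, and the seesaw property as stated in Lemma \ref{ineq1} requires all three terms to have non-zero negative part. I would address this either by replacing $\GG_1 \cap \GG_2$ with the $\OO_X$-submodule generated by its negative part (and checking the slope inequality survives this replacement, using that passing to the generated submodule can only increase $r$ while $\theta$ restricted to $D_-$ is controlled), or by arguing directly with Hilbert functions via the additivity $h_{\GG_1 + \GG_2} = h_{\GG_1} + h_{\GG_2} - h_{\GG_1 \cap \GG_2}$ valid by reductivity of $G$. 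This bookkeeping with negative parts and the generated-in-$D_-$ condition is exactly the place where the classical abelian-category proof must be substantially modified, as the introduction warns.
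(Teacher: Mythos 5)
Your proposal follows essentially the same route as the paper's proof: it uses Proposition \ref{finiteness} to turn the supremum of slopes into an attained maximum, the exact sequence $0 \to \GG_1\cap\GG_2 \to \GG_1\oplus\GG_2 \to \GG_1+\GG_2 \to 0$ together with the seesaw property to show that the sum of two slope-maximizers is again a maximizer, and exactly the paper's fix for the problematic intersection (replace it by the submodule generated by its negative part, or argue via Hilbert-function additivity, splitting according to whether the negative part of $\GG_1\cap\GG_2$ vanishes). The only differences are organizational: the paper extracts $\FF'$ by choosing a maximizer whose Hilbert function is maximal for the partial order \eqref{order} rather than by iterating sums, and one small slip in your write-up should be corrected --- Proposition \ref{finiteness} bounds the number of Hilbert functions, not of submodules, so your iteration terminates not because there are finitely many maximizers but because each sum strictly increases the Hilbert function within a finite set (equivalently, strictly increases $r$, which is bounded by $r(\FF)$).
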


\begin{proof}
It is clear that if $\FF'$ exists then it has to be unique by (ii). Let us prove the existence of $\FF'$.

With the notation of Proposition \ref{finiteness} applied to $h$, let $M:=\max\{\mut(h_i)\}_{i=1}^{N}$.
Among the Hilbert functions satisfying $\mut(h_i)=M$, we pick one which is maximal for the partial order defined by \eqref{order}.
Let $h'$ be such a Hilbert function, and let $\FF'\subseteq \FF$ be an $(\OO_X,G)$-submodule generated in $D_-$ whose Hilbert function is $h'$.
Then by construction $\FF'$ satisfies (i). A priori $h'$ and $\FF'$ are not unique, however we will show that $\FF'$ satisfies (ii), and this will implies the uniqueness of $h'$ and $\FF'$.

Let us now prove that $\FF'$ satisfies (ii). Let $0 \neq \GG \subseteq \FF$ be an $(\OO_X,G)$-submodule generated 
in $D_-$ such that $\mut(\GG)=\mut(\FF')$.
Consider the exact sequence of $(\OO_X,G)$-modules:
\begin{equation} \label{hi}
0 \to \FF' \cap \GG \to \FF' \oplus \GG \to \FF' + \GG \to 0,
\end{equation}
where $\FF' \oplus \GG$ and $\FF'+\GG$ are generated in $D_-$ (since $\FF'$ and $\GG$ also are) but not necessarily $\FF' \cap \GG$.
We denote by $h_1$, $h_2$, $h_3$, and $h_4$ the Hilbert functions of $\FF'$, $\GG$, $\FF'+\GG$, and $\FF' \cap \GG$ respectively.
We distinguish between two cases.
\begin{enumerate}[(a)]
\item If the negative part of $\FF' \cap \GG$ is zero, i.e., if $h_4(\rho)=0$ for all $\rho \in D_-$. Then we deduce from \eqref{hi} that $h_1(\rho)+h_2(\rho)=h_3(\rho)$ for all $\rho \in D_-$. It follows that
\begin{itemize}
\item $r(\FF'+\GG)=r(h_3)=r(h_1+h_2)=r(\FF' \oplus \GG)$;
\item $\sum_{\rho \in D_-} \theta_\rho h_3(\rho)=\sum_{\rho \in D_-} \theta_\rho (h_1(\rho)+h_2(\rho))$; and
\item $\sum_{\rho \in D_+} \theta_\rho h_3(\rho) \leq \sum_{\rho \in D_+} \theta_\rho (h_1(\rho)+h_2(\rho))$.
\end{itemize}
Hence
$$ \mut(\FF'+\GG)=\frac{-\theta(\FF'+\GG)}{r(\FF'+\GG)} \geq \frac{-\theta(\FF' \oplus \GG)}{r(\FF' \oplus \GG)}=\mut(\FF' \oplus \GG).$$
Now since $\mut(\FF')=\mut(\GG)$, the seesaw property applied to 
$$ 0 \to \FF' \to \FF' \oplus \GG \to \GG \to 0$$
gives $\mut(\FF' \oplus \GG)=\mut(\FF')$.
Therefore $\FF'+\GG$ is an $(\OO_X,G)$-submodule of $\FF$ with greater $\mut$-slope and whose Hilbert function $h_3$ is greater or equal to $h_1$ for the partial order defined by \eqref{order}. By definition of $\FF'$, we must have $h_1=h_3$; this implies that $\FF'+\GG=\FF'$, i.e., that $\GG \subseteq \FF'$.

\item If the negative part of $\FF' \cap \GG$ is non-zero, then we denote by $\widetilde{\FF' \cap \GG}$ the 
$(\OO_X,G)$-submodule generated by its negative part; $\widetilde{\FF' \cap \GG}$ can be $\FF' \cap \GG$ itself or a non-zero proper subsheaf. 
We denote the Hilbert function of $\widetilde{\FF' \cap \GG}$ by $h_5$. 
By definition, we have $h_5(\rho)=h_4(\rho)$ for all $\rho \in D_-$, and $h_5(\rho) \leq h_4(\rho)$ for all $\rho \in \Irr G \setminus D_-$.
So arguing as before, we easily prove that
$$ \mut(\widetilde{\FF' \cap \GG}) \geq \mut(\FF' \cap \GG).$$
Suppose that $\GG \not\subset \FF'$, and consider the two exact sequences of non-zero $(\OO_X,G)$-modules:
\begin{equation} \label{h1}
0 \to \FF' \cap \GG \to  \GG \to \mathcal{U} \to 0,
\end{equation}
and
\begin{equation} \label{h2}
0 \to \FF'  \to \FF'+\GG \to \mathcal{U} \to 0.
\end{equation}
Since $\mut(\GG)=\mut(\FF')$ is maximal among $(\OO_X,G)$-submodules of $\FF$ generated in $D_-$, we necessarily have $\mut(\GG) \geq \mut(\widetilde{\FF' \cap \GG})$. On the other hand, we just saw that $\mut(\widetilde{\FF' \cap \GG}) \geq \mut(\FF' \cap \GG)$, hence $\mut(\GG) \geq \mut(\FF' \cap \GG)$. 
The seesaw property applied to \eqref{h1} gives $\mut(\GG) \leq \mut(\mathcal{U})$. 
Thus, since $\mut(\FF')=\mut(\GG)$, the seesaw property applied to \eqref{h2} gives $\mut(\FF') \leq \mut(\FF'+\GG)$. By definition of $\FF'$, this 
implies that $\FF'+\GG=\FF'$, which contradicts our assumption $\GG \not\subset \FF'$. 
\end{enumerate}
Therefore, if $0 \neq \GG \subseteq \FF$ is an $(\OO_X,G)$-submodule generated in $D_-$ such that $\mut(\GG)=\mut(\FF')$, then necessarily $\GG \subseteq \FF'$.
\end{proof}

The next proposition assures the uniqueness of the first term of the $\mut$-HN filtration.  

\begin{proposition}  \label{unik}
Let $\FF$ be an $(\OO_X,G)$-module generated in $D_-$ with a filtration satisfying the properties (i)-(iii) of Theorem \ref{HNexists}. Then the first term $\FF_1$ of the filtration is the $(\OO_X,G)$-submodule $\FF'$ given by Proposition \ref{unique}.
\end{proposition}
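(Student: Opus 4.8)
The goal is to show that if $\FF_{\bullet}$ is any filtration satisfying properties (i)--(iii) of Theorem \ref{HNexists}, then its first term $\FF_1$ must coincide with the canonical maximal destabilizing subobject $\FF'$ produced by Proposition \ref{unique}. The strategy is to verify that $\FF_1$ satisfies the two defining properties (i)--(ii) of $\FF'$ in Proposition \ref{unique}, since we already know from that proposition that such an object is unique.

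The plan is to compare $\FF_1$ with an arbitrary nonzero subobject $\GG \subseteq \FF$ generated in $D_-$ by pushing $\GG$ through the successive quotients of the filtration. First I would record the key numerical input: by property (iii) the slopes satisfy $\mut(\FF^1) > \mut(\FF^2) > \cdots$, and by property (ii) each quotient $\FF^i = \FF_i / \FF_{i-1}$ is $\mut$-semistable. The central observation is that $\FF^1 = \FF_1$, so $\mut(\FF_1) = \mut(\FF^1)$ is the largest of all the quotient slopes. Now given $0 \neq \GG \subseteq \FF$, consider the smallest index $j$ such that $\GG \subseteq \FF_j$; then the composite $\GG \hookrightarrow \FF_j \twoheadrightarrow \FF_j/\FF_{j-1} = \FF^j$ is nonzero. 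Writing $\GG' := \GG \cap \FF_{j-1}$ and $\overline{\GG}$ for the (nonzero) image in $\FF^j$, one gets a short exact sequence $0 \to \GG' \to \GG \to \overline{\GG} \to 0$.

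From here I would argue by induction on $j$ that $\mut(\GG) \leq \mut(\FF_1)$, with equality forcing $\GG \subseteq \FF_1$. The base case $j=1$ gives $\GG \subseteq \FF_1$ directly, so $\mut(\GG) \leq \mut(\FF_1)$ by the maximal-destabilizing property of $\FF_1 = \FF'$ applied within $\FF_1$ (or more elementarily, by $\mut$-semistability of $\FF^1 = \FF_1$). For $j > 1$, since $\overline{\GG}$ is a nonzero subobject of the $\mut$-semistable quotient $\FF^j$, we have $\mut(\overline{\GG}) \leq \mut(\FF^j) < \mut(\FF^1) = \mut(\FF_1)$. The seesaw property (Lemma \ref{ineq1}) applied to the sequence $0 \to \GG' \to \GG \to \overline{\GG} \to 0$ then relates $\mut(\GG)$ to $\mut(\GG')$ and $\mut(\overline{\GG})$; combined with the inductive bound $\mut(\GG') \leq \mut(\FF_1)$ and the strict bound $\mut(\overline{\GG}) < \mut(\FF_1)$, this yields $\mut(\GG) < \mut(\FF_1)$ whenever $j > 1$. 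In particular the case of equality $\mut(\GG) = \mut(\FF_1)$ can only occur when $j = 1$, which forces $\GG \subseteq \FF_1$.

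This shows $\FF_1$ satisfies both (i) and (ii) of Proposition \ref{unique}, hence $\FF_1 = \FF'$. The main obstacle I anticipate is technical rather than conceptual: the subobjects $\GG'$ and $\overline{\GG}$ need not a priori be generated in $D_-$, so one must handle the slope comparisons carefully, using the passage to the submodule generated by the negative part (as in case (b) of the proof of Proposition \ref{unique}, where $\mut(\widetilde{\FF'\cap\GG}) \geq \mut(\FF'\cap\GG)$) and ensuring that all objects entering the seesaw property have nonzero negative part so that their slopes are defined. Keeping track of which intersections and images retain a nonzero negative part, and invoking the appropriate inequality $\mut(\widetilde{\HH}) \geq \mut(\HH)$ when an object must be replaced by the submodule it generates in $D_-$, is the delicate bookkeeping that makes the argument go through.
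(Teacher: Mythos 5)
Your proposal is correct, but it organizes the induction differently from the paper's proof. The paper inducts on the length of the filtration: it passes to the quotient filtration $0 \subsetneq \FF_2/\FF_1 \subsetneq \cdots \subsetneq \FF/\FF_1$, takes as induction hypothesis that $\FF_2/\FF_1$ is the maximal destabilizing subobject of $\FF/\FF_1$ (that is, the proposition itself for the shorter filtration), and compares an arbitrary $\GG$ with $\FF_1$ alone, bounding the image $\GG/(\FF_1\cap\GG) \subseteq \FF/\FF_1$ by $\mut(\FF_2/\FF_1) < \mut(\FF_1)$. You instead keep the filtration fixed and induct on the minimal index $j$ with $\GG \subseteq \FF_j$, bounding the image $\overline{\GG} \subseteq \FF^j$ by semistability of that factor: $\mut(\overline{\GG}) \leq \mut(\FF^j) < \mut(\FF^1) = \mut(\FF_1)$. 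Your route is the more classical one and slightly more economical: it uses only properties (ii) and (iii) of the filtration, never the inductive characterization of the maximal destabilizer of a quotient, and it produces in one sweep the strict inequality $\mut(\GG) < \mut(\FF_1)$ for every $\GG \not\subseteq \FF_1$, from which (i) and (ii) of Proposition \ref{unique} follow simultaneously; the paper's reduction to $\FF/\FF_1$, on the other hand, lets it recycle the case analysis of Proposition \ref{unique} almost verbatim. The bookkeeping you flag is exactly the bookkeeping the paper also performs, and your fixes are the right ones: if $\GG' := \GG \cap \FF_{j-1}$ has zero negative part, a direct computation gives $\mut(\GG) \leq \mut(\overline{\GG})$; otherwise the seesaw property (Lemma \ref{ineq1}) applies, and in the subcase $\mut(\GG) > \mut(\overline{\GG})$ it yields the strict bound $\mut(\GG) < \mut(\GG') \leq \mut(\widetilde{\GG'})$, where the induction hypothesis applies to $\widetilde{\GG'} \subseteq \FF_{j-1}$ because it is generated in $D_-$. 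Two small points: the image $\overline{\GG}$ is automatically generated in $D_-$ (the images of the generators of $\GG$ generate it), hence always has a well-defined slope, so the only delicate object is $\GG'$; and in your base case the appeal to the maximal-destabilizing property of $\FF_1 = \FF'$ would be circular, so only your alternative justification, the semistability of $\FF^1 = \FF_1$, should be kept.
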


\begin{proof}
The proof goes by induction on the length $t+1$ of the filtration. If $t=0$, then $\FF$ is $\mu_{\theta}$-semistable and $\FF_1=\FF=\FF'$.
We now suppose that $t \geq 1$, and we consider the filtration of length $t$ given by
$$0 \subsetneq \FF_{2}/\FF_{1}\subsetneq \FF_3/\FF_1 \subsetneq \cdots\subsetneq \FF_{t}/\FF_1\subsetneq \FF_{t+1}/\FF_1=\FF/\FF_1.$$
Then it is clear that properties (i)-(iii) of Theorem \ref{HNexists} are again satisfied.  
Hence, by induction hypothesis, we know that $\FF_{2}/\FF_{1}=(\FF/\FF_1)'$. We want to deduce from this that $\FF_1=\FF'$, i.e., that $\FF_1$ satisfies the properties (i) and (ii) of Proposition \ref{unique}. Let $0 \neq \GG \subseteq \FF$ be an $(\OO_X,G)$-submodule generated in $D_-$. 
We distinguish between several cases:

\begin{enumerate}[a)]
\item If $\GG \subset \FF_1$, then $\mut(\GG) \leq \mut(\FF_1)$ since $\FF_1$ is $\mut$-semistable. 

\item If $\GG \not\subseteq \FF_1$, then $\GG/(\FF_1 \cap \GG)$ is a non-zero $(\OO_X,G)$-submodule of $\FF /\FF_1$ generated in $D_-$, and thus 
$$\mut(\GG/(\FF_1 \cap \GG)) \leq \mut(\FF_{2}/\FF_{1})<\mut(\FF_1),$$ 
where the first inequality is by induction hypothesis and the second inequality is property (iii) of Theorem \ref{HNexists}. We 
denote by $\widetilde{\FF_1 \cap \GG}$ the $(\OO_X,G)$-submodule generated by the negative part of $\FF_1 \cap \GG$.  

\begin{itemize}
\item If $\widetilde{\FF_1 \cap \GG}=0$, then an explicit calculation gives $\mut(\GG) \leq \mut(\GG/(\FF_1 \cap \GG))$, and 
thus $\mut(\GG)< \mut(\FF_1)$.  

\item If $\widetilde{\FF_1 \cap \GG}\neq 0$, then one easily checks that $\mut(\FF_1 \cap \GG) \leq \mut(\widetilde{\FF_1 \cap \GG})$. 
Since $\FF_1$ is $\mut$-semistable, we have $\mut(\widetilde{\FF_1 \cap \GG}) \leq \mut(\FF_1)$, and thus $\mut(\FF_1 \cap \GG) \leq \mut(\FF_1)$.  
The seesaw property applied to 
\begin{equation} \label{h3}
0 \to \FF_1 \cap \GG \to \GG \to \GG/(\FF_1 \cap \GG) \to 0
\end{equation}
implies that either 
$$\mut(\GG) \leq \mut(\GG/(\FF_1 \cap \GG)) < \mut(\FF_1)$$ or $$\mut(\GG)<\mut(\FF_1 \cap \GG) \leq \mut(\FF_1).$$
\end{itemize}
\end{enumerate}
In all cases, we get that $\mut(\GG) \leq \mut(\FF_1)$, i.e, that $\FF_1$ satisfies (i) of Proposition \ref{unique}.

Let us now suppose that $\GG$ satisfies $\mut(\GG)=\mut(\FF_1)$ but is not contained in 
$\FF_1$. We have seen in b) that either $\widetilde{\FF_1 \cap \GG}=0$, and then 
$$\mut(\GG) \leq \mut(\GG/(\FF_1 \cap \GG))<\mut(\FF_{1})$$
which is a contradiction, or else $\widetilde{\FF_1 \cap \GG} \neq 0$, and then 
$$\mut(\FF_1 \cap \GG) \leq \mut(\FF_1)=\mut(\GG).$$ 
In the second case, 
the seesaw property applied to \eqref{h3} gives $\mut(\GG) \leq \mut(\GG/(\FF_1 \cap \GG))$. But $\mut(\FF_2/\FF_1)<\mut(\FF_1)=\mut(\GG)$, 
hence $\mut(\FF_2/\FF_1)<\mut(\GG/(\FF_1 \cap \GG))$, which contradicts our induction assumption. Therefore $\GG \subseteq \FF_1$, and 
thus $\FF_1$ satisfies (ii) of Proposition \ref{unique}. Then the result follows from the uniqueness of an $(\OO_X,G)$-submodule of $\FF$ generated in $D_-$ and 
satisfying properties (i) and (ii) of Proposition \ref{unique}.
\end{proof}

\begin{proof}[Proof of Theorem \ref{HNexists}]
Let us prove the existence of a filtration satisfying properties (i)-(iii) of Theorem \ref{HNexists} by induction on the dimension of the negative part of $\FF$. If $\FF$ is a $\mut$-semistable $(\OO_X,G)$-module, then the filtration $0 \subsetneq \FF$ satisfies (i)-(iii). Otherwise, let $\FF'$ be the $(\OO_X,G)$-submodule given by Proposition \ref{unique}. Then $0<r(\FF/\FF')<r(\FF)$, and thus by induction hypothesis, there exists a filtration
$$0\subsetneq \overline{\FF}_1 \subsetneq \overline{\FF}_2\subsetneq\cdots\subsetneq \overline{\FF}_{t-1}\subsetneq \overline{\FF}_t=\FF/\FF',$$
for some $t \geq 1$, verifying the assumptions (i)-(iii) of Theorem \ref{HNexists}.
For $i \geq 2$, we denote by $\FF_i$ the preimage of $\overline{\FF}_{i-1}$ in $\FF$ and we denote $\FF_1:=\FF'$. 
Then one easily checks that the filtration
$$0\subsetneq \FF_{1}\subsetneq \FF_{2}\subsetneq\cdots\subsetneq \FF_{t}\subsetneq \FF_{t+1}=\FF$$
also satisfies properties (i)-(iii) of Theorem \ref{HNexists}.

It remains to prove the uniqueness part of Theorem \ref{HNexists}, but this is a direct consequence of Proposition \ref{unik}. 
\end{proof}

\begin{remark} \label{JH}
Using the same arguments as for Proposition \ref{unique}, we can prove that 
every $\mut$-semistable $(G,h)$-constellation $\FF$ has a (generally non-unique) \emph{$\mut$-Jordan-H\"older filtration}, i.e., a filtration 
$$0\subsetneq \JJ_{1}\subsetneq \JJ_{2}\subsetneq\cdots\subsetneq \JJ_{s}\subsetneq \JJ_{s+1}=\FF$$
verifying that all $\JJ_{i}/\JJ_{i-1}$ are $\mut$-stable and $\mut(\JJ_{1})=\mut(\JJ_{2})= \cdots =\mut(\JJ_{s+1})$. 
Then the graded object $\bigoplus_{i=1}^{s+1}\JJ_{i}/\JJ_{i-1}$ is unique, i.e., it does not depend (up to isomorphism) on the choice of the $\mut$-Jordan-H\"older filtration. 
\end{remark}

\section{GIT-stability and \texorpdfstring{$\mu_D$}{mu-D}-Harder-Narasimhan filtration}  \label{GITsection}

In this section we introduce the notions of GIT-stability and $\mu_D$-stability for $(G,h)$-constellations.
In \S \ref{2.1} we introduce the invariant Quot scheme $\GQuot$, then in \S \ref{origin} we explain how to 
identify the $(G,h)$-constellations with certain elements of $\GQuot$. In particular, we will see that isomorphy classes of $(G,h)$-constellations are in one-to-one correspondence with certain $\Gamma$-orbits of $\GQuot$, where $\Gamma$ is a reductive algebraic group acting on $\GQuot$ defined in \S \ref{gamma1}. 
It is then natural in \S\S \ref{gamma1}--\ref{gamma2} to consider the GIT-quotient of $\GQuot$ by the $\Gamma$-action, and that 
is how the GIT-stability comes into the picture. 
Indeed, the invariant Quot scheme $\GQuot$ being quasi-projective, we need to restrict the $\Gamma$-action to the open subset of
GIT-(semi)stable points $\GQuot^{(s)s}$ to obtain a categorical quotient. The correspondence between elements of $\GQuot$ and 
$(G,h)$-constellations, in turn, allows us to talk about GIT-(semi)stable $(G,h)$-constellations. In \S \ref{HN2} we introduce a 
new stability condition on the $(\OO_X,G)$-modules generated in $D_-$, the $\mu_D$-stability, which is a slope stability condition. Finally, we prove that $\mu_D$-stability and GIT-stability coincide for $(G,h)$-constellations, and we construct the $\mu_D$-Harder-Narasimhan filtration associated with a $(G,h)$-constellation. 

Let us mention that \S\S \ref{2.1}--\ref{gamma2} are mainly extracted from \cite[\S 2 and \S 3]{BT15}, but \S \ref{HN2}, which is the most important part of this section, is an original work.
 
As before, we fix a Hilbert function $h: \Irr G \to \NN$ and a stability function $\theta$; see Definition \ref{deftheta}. 
The classical reference for the concepts related to Geometric Invariant Theory is \cite{GIT:1994}.

\subsection{The invariant Quot scheme} \label{2.1}

For every $\rho \in D_-$, let $A_\rho=\CC^{h(\rho)}$. We define the $G$-equivariant free $\OO_X$-module of finite rank
\begin{equation}\label{defH}
\HH := \left( \bigoplus_{\rho \in D_-} A_\rho \otimes V_{\rho} \right) \otimes \OO_X,
\end{equation}
and we denote by $\GQuot$ the \emph{invariant Quot scheme} which parametrizes all the
$(\OO_X,G)$-submodules $\KK \subseteq \HH$ such that $\HH/\KK$ is a $(G,h)$-constellation. Equivalently, 
the invariant Quot scheme parametrizes the equivalence classes of quotient maps $[q\colon \HH \twoheadrightarrow \FF]$, 
where $\FF$ is a $(G,h)$-constellation; two quotients $q$ and $q'$ being in the same equivalence class if $\Ker q= \Ker q'$.

The invariant Quot scheme was constructed by Jansou in \cite{Jan:2006}, and then used in \cite{BT15} to construct the 
moduli space of $\theta$-stable $(G,h)$-constellations. In the next subsection, we will explain how to associate a given 
$(G,h)$-constellation $\FF$ (generated in $D_-$ by assumption, see \S \ref{first}) with a quotient $[q:\HH \twoheadrightarrow \FF] \in \GQuot$. 
Let us emphasize that there is not such a correspondence for $(G,h)$-constellations not generated in $D_-$, and this is the reason why we 
consider only $(G,h)$-constellations generated in $D_{-}$ in this article.

\subsection{Quotients originating from a constellation} \label{origin}
Let $\FF$ be a $(G,h)$-constella\-tion, and let $H^0(\FF) = \bigoplus_{\rho \in \Irr G}\FF_{\rho}\otimes V_{\rho}$ be the isotypic decomposition of 
its space of global sections. Since $\FF_{\rho} = \mathcal{H}om_G(V_{\rho}, H^0(\FF))$, we have evaluation maps
\begin{equation} \label{evv}
ev_{\rho}\colon \FF_{\rho} \otimes V_{\rho} \otimes H^0(\OO_X) \to H^0(\FF),\ \;\alpha \otimes v \otimes f \mapsto f \cdot \alpha(v),
\end{equation}
and $H^0(\FF)$ is generated as an $H^0(\OO_X)$-module by the images of $ev_{\rho}$ ($\rho \in D_-$) by assumption. 
We choose a basis of each $\FF_{\rho}$, i.e., we fix an isomorphism $\psi_{\rho}\colon A_{\rho} \to \FF_{\rho}$, and we compose it with the evaluation map \eqref{evv} considered as a map between $\OO_X$-modules. We obtain
\begin{equation}\label{quotofconst}
q_{\rho}\colon A_{\rho} \otimes V_{\rho} \otimes \OO_X \to \FF,\ \;a \otimes v \otimes f \mapsto f \cdot \psi_{\rho}(a)(v).
\end{equation}
Their sum
$$q := \underset{\rho \in D_-}{\oplus} q_{\rho}\colon \HH = \bigoplus_{\rho \in D_-} A_{\rho} \otimes V_{\rho} \otimes \OO_X \to \FF$$
gives us a point $[q\colon \HH \twoheadrightarrow \FF] \in \GQuot$ with the property that the map
\begin{equation} \label{iso} \varphi_{\rho}\colon A_{\rho} \to \FF_{\rho} = \mathcal{H}om_G(V_{\rho},H^0(\FF)),\ \; a \mapsto (v \mapsto q(a \otimes v \otimes 1)), \end{equation}
is just the isomorphism $\psi_{\rho}$ since, for $a \in A_{\rho}$ and $v \in V_{\rho}$, we have
$$\varphi_{\rho}(a)(v) = q(a \otimes v \otimes 1) = 1 \cdot \psi_{\rho}(a)(v) = \psi_{\rho}(a)(v).$$

\begin{definition} \label{origin2}
Let $[q:\HH \twoheadrightarrow \FF]$ be an element of the invariant Quot scheme $\GQuot$. If for every $\rho \in D_-$ the map $\varphi_\rho$ defined by \eqref{iso} is an isomorphism, then we say that $q$ \emph{originates from} $\FF$. 
\end{definition}

Different choices of bases for $\FF_\rho$ give different elements of $\GQuot$, and it is precisely to cancel this ambiguity that we will introduce 
in \S \ref{gamma1} the action of the group $\Gamma$ on $\GQuot$.

Conversely, given an element $[q\colon \HH \twoheadrightarrow \FF] \in \GQuot$, the quotient $\FF$ is a $(G,h)$--constellation. However, 
the induced maps $\varphi_{\rho}$ need not to be isomorphisms so that $[q]$ need not to originate from $\FF$ as above.

\begin{lemma}  \label{omega}
With the notation above, the subset
$$\Omega^G(\HH,h):=\{\ [q:\HH \twoheadrightarrow \FF] \in \GQuot\ |\ [q] \text{ originates from } \FF\ \}$$
is open in the invariant Quot scheme $\GQuot$.
\end{lemma}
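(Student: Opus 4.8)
The plan is to exhibit $\Omega^G(\HH,h)$ as the non-vanishing locus of finitely many determinant sections, hence as a finite intersection of open subsets. Write $Q := \GQuot$, let $\pi \colon Q \times X \to Q$ be the projection, and let $\widetilde{q} \colon \pi^* \HH \twoheadrightarrow \widetilde{\FF}$ be the universal quotient, so that $\widetilde{\FF}$ is a flat family of $(G,h)$-constellations over $Q$ whose fiber over a point $[q]$ is the corresponding $\FF$. Since $X$ is affine, $\pi$ is affine, so $\pi_* \widetilde{\FF}$ is a quasi-coherent $G$-sheaf on $Q$ that commutes with base change and whose fiber at $[q]$ is $H^0(\FF)$. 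Because $G$ is reductive and the Hilbert function is constant (equal to $h$) over $Q$, the $\rho$-isotypic multiplicity sheaf $\mathcal{M}_\rho := \mathcal{H}om_G(V_\rho, \pi_* \widetilde{\FF})$ is locally free of rank $h(\rho)$ for every $\rho$, with fiber $\FF_\rho$ at $[q]$; this local freeness is the flatness statement underlying the construction of the invariant Quot scheme.

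Next I would globalize the maps $\varphi_\rho$ of \eqref{iso}. For each $\rho \in D_-$, evaluating the universal quotient on the tautological sections $a \otimes v \otimes 1$ (with $1 \in H^0(\OO_X)$) produces, after passing to $\rho$-isotypic multiplicity spaces, a morphism of locally free $\OO_Q$-modules
$$\Phi_\rho \colon A_\rho \otimes_\CC \OO_Q \longrightarrow \mathcal{M}_\rho,$$
both sheaves having rank $h(\rho)$, whose fiber at a point $[q]$ is exactly the linear map $\varphi_\rho$ of \eqref{iso}. By Definition \ref{origin2}, $[q]$ lies in $\Omega^G(\HH,h)$ precisely when $\varphi_\rho = \Phi_\rho \otimes k([q])$ is an isomorphism for every $\rho \in D_-$.

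Finally I would invoke the standard fact that, for a morphism $\Phi_\rho$ between locally free sheaves of the same finite rank, the locus $U_\rho := \{[q] \in Q \mid \Phi_\rho \otimes k([q]) \text{ is an isomorphism}\}$ is open: it is the non-vanishing locus of the determinant section $\det \Phi_\rho$, a section of the line bundle $\mathcal{H}om(\det(A_\rho \otimes \OO_Q), \det \mathcal{M}_\rho)$, and a square matrix of regular functions is invertible at a point iff its determinant is nonzero there, the non-vanishing locus being open. Since $\Omega^G(\HH,h) = \bigcap_{\rho \in D_-} U_\rho$ and $D_-$ is finite (see the discussion after \eqref{def_D}), it is a finite intersection of open subsets, hence open. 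The main obstacle is the first paragraph: rigorously producing the universal family together with the local freeness of $\mathcal{M}_\rho$, i.e. justifying that the Hilbert function stays constant and the multiplicity sheaves are locally free in the family. Once $\Phi_\rho$ is in place as a morphism of equal-rank vector bundles, the openness is immediate, and the finiteness of $D_-$ is exactly what keeps the intersection finite.
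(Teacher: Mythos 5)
Your proof is correct and follows essentially the same route as the paper: both express $\Omega^G(\HH,h)$ as the finite intersection over $\rho \in D_-$ of the loci where $\varphi_\rho$ has full rank $h(\rho)$ (equivalently, is an isomorphism), and conclude by openness of that condition together with finiteness of $D_-$. The only difference is that you spell out, via the universal quotient, the local freeness of the multiplicity sheaves, and the determinant section of $\Phi_\rho$, the sheaf-theoretic content that the paper compresses into the phrase ``$\rk \varphi_\rho=h(\rho)$ is an open condition.''
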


\begin{proof}
By the discussion above, we have
$$\Omega^G(\HH,h)= \bigcap_{\rho \in D_-} \left\{\; [q] \in \GQuot \ |\ \rk \varphi_\rho=h(\rho) \;\right\},$$
where $\varphi_\rho$ is the linear map defined by \eqref{iso}. Since $\rk \varphi_\rho=h(\rho)$ is an open condition for each $\rho \in D_-$, we obtain the result.
\end{proof}

\begin{remark}
As $\GQuot$ is reducible in general, $\Omega^G(\HH,h)$ might not be a dense open subset. 
\end{remark}

\subsection{GIT setting}  \label{gamma1}

Consider the natural action of the group $\Gamma':= \prod_{\rho \in D_-} \GL(A_{\rho})$ on $\HH$ by
multiplication from the left on the constituent components. This action induces an action on $\GQuot$ from the right, which we describe.
Let $\gamma = (\gamma_{\rho})_{\rho \in D_-} \in \Gamma'$ and $[q\colon \HH \twoheadrightarrow \FF] \in \GQuot$.
Then $[q]\cdot\gamma$ is the map
$$[q] \cdot \gamma\colon \HH \twoheadrightarrow \FF, \ \; a_{\rho} \otimes v_{\rho} \otimes f \mapsto q(\gamma_{\rho}a_{\rho} \otimes v_{\rho} \otimes f).$$
As the subgroup of scalar matrices $K:=\{ \prod_{\rho \in D_-} \alpha  \mathrm{Id}_{A_\rho}; \alpha \in \CC^* \} \cong \CC^*$  acts trivially on $\GQuot$, we restrict to consider the action of the subgroup
\begin{equation} \label{def Gamma}
\Gamma:= \left \{ (\gamma_\rho)_{\rho \in D_-} \in \prod_{\rho \in D_-} \GL(A_\rho) \; \middle| \; \prod_{\rho \in D_-} \det(\gamma_\rho)=1 \right \}\; ,
\end{equation}
an action with finite stabilizers. 

From the correspondence between quotients and constellations explained in \S \ref{origin}, it is clear that the open subscheme 
$\Omega^G(\HH,h)$ defined in Lemma \ref{omega} is $\Gamma$-stable, and that there is a one-to-one 
correspondence between the $\Gamma$-orbits in $\Omega^G(\HH,h)$ and the isomorphy classes of $(G,h)$-constellations. 
Therefore, we are naturally interested in performing the GIT quotient of $\GQuot$ by $\Gamma$. 
However, to construct such a quotient we first need to fix a $\Gamma$-linearized ample line bundle on $\GQuot$; 
the latter will depend on a finite subset $D\subset \Irr G$.

\begin{proposition}  \label{embedding_etha} \emph{(\cite[\S 2.1]{BT15})}
There exists a finite subset $D\subset \Irr G$ (depending on $h$ and $\theta$)  
and, for each $\rho \in D$, a finite dimensional vector space $H_\rho$ such that there is a locally closed immersion
\begin{equation}\label{Quotimmersion}
\eta \colon \GQuot \hookrightarrow \prod\limits_{\rho \in D} \PP(\Lambda^{h(\rho)} H_{\rho}).
\end{equation}
\end{proposition}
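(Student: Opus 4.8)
The plan is to adapt Grothendieck's classical embedding of a Quot scheme into a product of Grassmannians to the present equivariant affine setting, the role played there by the twist $\OO(m)$ for $m \gg 0$ being replaced here by the isotypic components indexed by a finite subset $D \subset \Irr G$. The starting observation is that, $X$ being affine, the global sections functor is exact: thus any quotient $[q \colon \HH \twoheadrightarrow \FF] \in \GQuot$ induces a surjection $H^0(\HH) \twoheadrightarrow H^0(\FF)$ of $G$-modules, and passing to $\rho$-isotypic multiplicity spaces we obtain a surjection $H^0(\HH)_\rho \twoheadrightarrow \FF_\rho$ for every $\rho \in \Irr G$, with $\dim \FF_\rho = h(\rho)$ fixed by the Hilbert function. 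Each such surjection determines a point of the Grassmannian of $h(\rho)$-dimensional quotients of $H^0(\HH)_\rho$, hence, via the Pl\"ucker embedding, a point of a projective space. The two difficulties to overcome are that $H^0(\HH)_\rho$ may be infinite-dimensional (since $\CC[X]$ can contain an irreducible with infinite multiplicity) and that $\Irr G$ is infinite; both are resolved by boundedness.

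To produce the finite-dimensional spaces $H_\rho$, I would work with the universal quotient $\widetilde{\FF}$ on $\GQuot \times X$ and its $\rho$-isotypic component $\widetilde{\FF}_\rho$, a coherent sheaf on $\GQuot$ of fiberwise dimension $h(\rho)$. The universal surjection $H^0(\HH)_\rho \otimes \OO_{\GQuot} \twoheadrightarrow \widetilde{\FF}_\rho$ is a surjection of coherent sheaves over the \emph{quasi-projective scheme of finite type} $\GQuot$ constructed by Jansou \cite{Jan:2006}; hence it already factors through $H_\rho \otimes \OO_{\GQuot}$ for some finite-dimensional subspace $H_\rho \subseteq H^0(\HH)_\rho$, and the resulting map $H_\rho \to \FF_\rho$ is surjective at every point. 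To single out the finite set $D$, I would invoke the finite generation of coherent $(\OO_X,G)$-submodules together with the boundedness encapsulated in Proposition \ref{finiteness}: the kernel $\KK = \ker q$ of any point is a coherent $(\OO_X,G)$-submodule, so the family of all such $\KK$ is generated in isotypic degrees lying in a single finite set; enlarging it if necessary, we take $D$ to contain $D_-$ and all these generating degrees.

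With $D$ and the $H_\rho$ in hand, define $\eta$ by sending $[q]$ to the tuple whose $\rho$-th entry is the $h(\rho)$-th exterior power of the quotient $H_\rho \twoheadrightarrow \FF_\rho$, viewed through the Pl\"ucker embedding $\Grass \hookrightarrow \PP(\Lambda^{h(\rho)} H_\rho)$ (with the usual identification of a Grassmannian of quotients with its Pl\"ucker image). Performed relatively over $\GQuot$ with the universal quotient, this assembles into a morphism $\eta \colon \GQuot \to \prod_{\rho \in D} \PP(\Lambda^{h(\rho)} H_\rho)$. It then remains to check that $\eta$ is a locally closed immersion. Injectivity on points holds because the data $\bigl(\ker(H_\rho \to \FF_\rho)\bigr)_{\rho \in D}$ recovers the submodule $\KK \subseteq \HH$: since $\FF$ is generated in $D_-$ and $\KK$ is generated in degrees inside $D$, these finitely many kernels determine $\KK = \ker q$, and hence the point $[q]$. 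That $\eta$ is moreover an immersion follows by identifying it locally with the standard chart description of the Quot functor: the affine charts of the target Grassmannians pull back to open subschemes on which $\eta$ is a closed immersion, exactly as in \cite[\S 2.1]{BT15}.

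The main obstacle is the \textbf{boundedness} underlying the second paragraph: one must choose $D$ and the spaces $H_\rho$ \emph{uniformly}, so that simultaneously (i) $H_\rho \to \FF_\rho$ is surjective for \emph{every} $(G,h)$-constellation parametrized by $\GQuot$, and (ii) the finitely many kernels $\ker(H_\rho \to \FF_\rho)$, $\rho \in D$, determine $[q]$. This is the equivariant analogue of a uniform Castelnuovo--Mumford regularity statement; the fact that $G$ is infinite and $\CC[X]$ may carry irreducibles with infinite multiplicity is precisely what forbids the naive choice $H_\rho = H^0(\HH)_\rho$ and forces one to rely on the a priori finite-typeness of $\GQuot$ and on the finite generation of the universal kernel.
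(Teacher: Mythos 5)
You should first note that this paper contains no proof of Proposition \ref{embedding_etha} at all: it is quoted from \cite[\S 2.1]{BT15}, where (following Jansou \cite{Jan:2006}) the map $\eta$ is the very construction by which $\GQuot$ acquires its quasi-projective scheme structure. Your overall shape --- record the isotypic quotients $H_\rho \twoheadrightarrow \FF_\rho$ for $\rho$ in a finite set $D$, land in Grassmannians of quotients, compose with the Pl\"ucker embedding, and use boundedness to make $D$ and the $H_\rho$ finite --- is indeed the shape of that construction. The problem is where you get the boundedness.

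All of your uniformity inputs (the finite-dimensional spaces $H_\rho$, and the uniform set of isotypic degrees generating the kernels) are extracted from the quasi-compactness and finite-typeness of $\GQuot$ together with its universal family. But the finite-typeness of $\GQuot$ is a \emph{consequence} of the embedding \eqref{Quotimmersion}: in \cite{Jan:2006} and \cite{BT15} the scheme $\GQuot$ is constructed as a locally closed subscheme of such a product, so invoking the scheme, its universal quotient, and its noetherianity to produce $D$ and $H_\rho$ is circular as a proof of this proposition. The genuine content --- what you correctly flag as the ``main obstacle'' but do not prove --- is the a priori boundedness statement: after a $G$-equivariant closed embedding of $X$ into a $G$-module, one shows by degree arguments that the kernels $\KK = \ker q$ of \emph{all} quotients with Hilbert function $h$ are generated by their isotypic components in a single finite set of representations and in uniformly bounded degree, before any scheme structure exists. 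Note also that Proposition \ref{finiteness} cannot substitute for this: it bounds the Hilbert functions of submodules of constellations \emph{generated in} $D_-$, whereas the kernels $\KK \subset \HH$ are not generated in $D_-$ and their isotypic multiplicities are typically infinite (they differ from the possibly infinite multiplicities of $H^0(\HH)$ by the finite numbers $h(\rho)$), so no statement about their generating degrees follows from it. Finally, even granting your $D$ and $H_\rho$, the claim that $\eta$ is a locally closed immersion --- not merely injective on closed points --- is deferred to ``exactly as in \cite[\S 2.1]{BT15}'', which is precisely the step under proof; functorial injectivity plus a flattening/chart argument for when a Grassmannian point extends to a module quotient is needed there and is absent.
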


Let us note that if $h(\rho)=0$ for any $\rho \in D$, 
then $\PP(\Lambda^{h(\rho)} H_{\rho})$ is a point, and thus $\rho$ plays no role in the embedding \eqref{Quotimmersion}. 
Therefore, we can assume that $h(\rho)\neq 0$ for all $\rho \in D$. 
Also, as noticed in \cite[Remark 2.2]{BT15}, for any set $D'$ containing $D$ we again obtain an embedding of $\GQuot$. Hence, adding further 
representations if necessary, we will always assume that the following hypothesis holds.

\begin{hypothesis}  \label{hypoD}
$D$ is a finite subset of $\Irr G$ such that the morphism \eqref{Quotimmersion} is a closed immersion, $D$ contains $D_-$ and intersects $D_+$, and $h(\rho) \neq 0$ for every $\rho \in D$ (i.e., $D$ is contained in $\supp h$).
\end{hypothesis}

Choose a sequence of positive integers $(\kappa_\rho)_{\rho \in D} \in \NN_{>0}^D$ and consider the ample line
bundles $\OO_{\rho}(\kappa_\rho)$ on
$\PP(\Lambda^{h(\rho)}H_{\rho})$ which together give an ample line bundle
\begin{equation}\label{linebdl}
\Ll = \eta^* \left( \bigotimes_{\rho \in D} \OO_{\rho}(\kappa_\rho) \right)
\end{equation}
on $\GQuot$. 
Let us note that the definition of $\Ll$ does not depend on the choice of the vector spaces $H_\rho$ in Proposition \ref{embedding_etha}; this follows namely from the explicit construction of the $H_\rho$ in the proof of \cite[Proposition 2.1]{BT15}.

The action of $\Gamma$ on $\GQuot$ induces a natural linearization on
some power $\Ll^k$ of $\Ll$; see the remark after \cite[Lemma 4.3.2]{HL10}. 
Replacing $\kappa_{\rho}$ by $k\kappa_{\rho}$ for each $\rho \in D$, we can assume that $\Ll$
itself carries a $\Gamma$-linearization.  

Further, let $\chi:\Gamma \to \CC^*$ be a character of $\Gamma$. Then
$\chi(\gamma) = \prod_{\rho \in D_-} \det(\gamma_\rho)^{\chi_\rho}$ with $(\chi_\rho)_{\rho \in D_-} \in \ZZ^{D_-}$.
We write $\Ll_{\chi}$ for the ample line bundle $\Ll$ equipped with the linearization twisted by the character $\chi$; this ample 
line bundle depends on $D$ by construction. Finally, we denote by $\GQuot_D^{(s)s}$ the open subset of GIT-(semi)stable points of 
$\GQuot$ with respect to $\Ll_\chi$; see \cite[Definition 1.7]{GIT:1994} for the definition of GIT-(semi)stable points. One 
should really keep in mind that $\GQuot_D^{(s)s}$ does depend on $D$ and that different choices of $D$ lead to different sets of GIT-(semi)stable points.

\begin{remark}
In the following, we will consider $\Ll_\chi$ with 
$\kappa_{\rho} \in \QQ_{>0}$ ($\rho \in D$) and $\chi_\rho \in \QQ$ ($\rho \in D_-$). 
In that case, it has to be understood that we replace each $\kappa_\rho$ by $p_1 \kappa_\rho$ and each 
$\chi_\rho$ by $p_2 \chi_\rho$, where $p_1$ resp. $p_2$, is the least common multiple of the denominators of all 
the $\kappa_{\rho}$ resp. of all the $\chi_{\rho}$.
\end{remark}

\subsection{Choice of GIT parameters}  \label{gamma2}

In this subsection, we fix the values of the GIT parameters $\kappa_\rho \;(\rho \in D)$ and $\chi_\rho \;(\rho \in D_-)$ 
in order to relate the GIT-(semi)stability for points of $\GQuot$ with the $\mu_\theta$-(semi)stability introduced 
for $(G,h)$-constel\-lations in \S \ref{first}; see Theorem \ref{relation} for a precise statement. 
Let us mention that the numerical values given in \cite[\S 3.3]{BT15} are not correct and should be replaced by the numerical values given below; see \cite{BT17} for more details.

Recall that we fixed a stability function $\theta$ at the beginning of \S \ref{GITsection}, and let $D$ be a finite subset of 
$\Irr G$ satisfying Hypothesis \ref{hypoD}.
We denote:
\begin{align*}
A &:=\bigoplus_{\rho \in D_-} A_\rho, \text{ which is a vector space of dimension } r(h)=\sum_{\rho \in D_-} h(\rho)  \;;\\
d &:= \# (D\backslash D_{-}) \in \NN_{>0}\; ; \text{ and}\\
S_D &:= \sum_{\rho \in \Irr G \setminus D}\theta_{\rho}h(\rho) \in \QQ_{>0}.
\end{align*}
Given numbers $\kappa_\rho$ ($\rho \in D$) and $\chi_\rho$ ($\rho \in D_-$), and any Hilbert function $h': \Irr G \to \NN$, we denote
\begin{align*}
\kappa_D(h')&:=\sum_{\rho \in D} \kappa_\rho h'(\rho) \;; \text{ and } \\
    \chi(h')&:=\sum_{\rho \in D_-} \chi_\rho h'(\rho) \;;
\end{align*}
where we stress the fact that the parameters $\kappa_\rho$ ($\rho \in D$) depend on $D$. 
We now fix the following values for the $\kappa_\rho$ ($\rho \in D$) and the $\chi_\rho$ ($\rho \in D_-$) introduced in \S \ref{gamma1}:
\begin{equation} \label{choicechikappa}
\left\{
    \begin{array}{llll}
\kappa_{\rho} &\in \QQ_{>0}  &&\text{for } \rho \in D_-\\
\kappa_{\rho} &= \theta_{\rho} + \frac{S_D}{d \cdot h(\rho)} &&\text{for } \rho \in D\backslash D_-\\
\chi_{\rho} &= \theta_{\rho} - \kappa_{\rho} + \frac{\kappa(h)}{r(h)}  &&\text{for } \rho \in D_-
\end{array}
\right.
\end{equation}
We recall that if $\rho \in D$, then $h(\rho) \neq 0$ by Hypothesis \ref{hypoD}. Let us also note that
$$\kappa(h):=\kappa_{D}(h)=\sum_{\rho \in D_-} \kappa_\rho h(\rho) + \sum_{\rho \in D_+} \theta_\rho h(\rho)$$
is well-defined, i.e., it does not depend on $D$.
Moreover, $S_D= - \sum_{\rho \in D} \theta_{\rho}h(\rho) \in \QQ_{>0}$, so $\kappa_{\rho} \in \QQ_{>0}$ for all $\rho \in D$.

Since $(G,h)$-constellations identify with elements of $\Omega^G(\HH,h)$ (see \S \ref{origin}), it makes sense to talk about $\mu_\theta$-(semi)stability for quotients. We define
\begin{equation*}  
\Omega^G(\HH,h)_{\theta}^{(s)s}:= \{\ [q:\HH \twoheadrightarrow \FF] \in \Omega^G(\HH,h)\;|\; \FF \text{ is $\mu_\theta$-(semi)stable}\ \}.
\end{equation*}
The latter is an open subscheme of $\Omega^G(\HH,h)$ by Lemma \ref{comparison} and \cite[\S 4.1]{BT15}.
The next result motivates the choices we made for the values of $\kappa_\rho$ and $\chi_\rho$.

\begin{theorem} \label{relation}
With the notation above and the GIT parameters given by \eqref{choicechikappa}, there exists a finite subset $D \subset \Irr G$ \emph{big enough} (i.e., $D$ contains a given $\tilde D$ and satisfies Hypothesis \ref{hypoD}), such that
$$\Omega^G(\HH,h)_{\theta}^{s} \subseteq \GQuot_{D}^{s} \subseteq \GQuot_{D}^{ss} \subseteq \Omega^G(\HH,h).$$
Moreover, each of these sets is $\Gamma$-stable and open in $\GQuot$.
\end{theorem}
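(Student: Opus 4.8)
The strategy is to translate GIT-(semi)stability into a numerical condition via the Hilbert-Mumford criterion, and then to read off that this condition is, up to an error controlled by $S_D$, exactly the truncation to $D$ of the $\mu_\theta$-stability condition. Throughout I use the convention that $[q] \in \GQuot$ is GIT-(semi)stable with respect to $\Ll_\chi$ precisely when $\mu(\lambda,[q]) \sgeq 0$ for every non-trivial one-parameter subgroup $\lambda\colon \Gm \to \Gamma$. A one-parameter subgroup of $\Gamma$ is the same datum as a weight decomposition $A_\rho = \bigoplus_{n\in\ZZ} A_\rho^{(n)}$ for each $\rho \in D_-$ subject to $\sum_{\rho\in D_-}\sum_n n\dim A_\rho^{(n)}=0$, and passing to the limit $\lim_{t\to 0}[q]\cdot\lambda(t)$ turns such data into a filtration of $\FF$ by $(\OO_X,G)$-submodules generated in $D_-$, the step $\FF^{(\geq n)}$ being generated by the images under $q$ of the $A_\rho^{(m)}\otimes V_\rho\otimes\OO_X$ with $m\geq n$. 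I would first set up this dictionary carefully in both directions, so that testing the Hilbert-Mumford inequality over all $\lambda$ becomes testing it over all filtrations of $\FF$ by submodules generated in $D_-$.

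The core of the proof is the explicit evaluation of $\mu(\lambda,[q])$ from the description $\Ll = \eta^*\big(\bigotimes_{\rho\in D}\OO_\rho(\kappa_\rho)\big)$ twisted by the character $\chi$. For the one-parameter subgroup attached to a single submodule $\FF'\subseteq\FF$ of Hilbert function $h'$ (two weights, balanced so that $\sum_n n\dim=0$), the untwisted weight comes out proportional to $\kappa_D(h')\,r(h) - \kappa(h)\,r(h')$, while the twist contributes $r(h)\chi(h') - r(h')\chi(h)$. Substituting the prescribed values \eqref{choicechikappa} and using $\chi(h)=0$ together with $\kappa(h)=\kappa_D(h)$, all the $\kappa(h)$-terms cancel and the total weight becomes proportional to
$$\theta_D(h') + \frac{S_D}{d}\sum_{\rho\in D\setminus D_-}\frac{h'(\rho)}{h(\rho)}, \qquad \theta_D(h'):=\sum_{\rho\in D}\theta_\rho\, h'(\rho).$$
For a general filtration an Abel summation writes $\mu(\lambda,[q])$ as a non-negative combination of these single-submodule functionals, so it suffices to understand the displayed quantity. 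Its second summand is non-negative since $S_D>0$, and the first satisfies $\theta_D(h') = \theta(h') - \sum_{\rho\notin D}\theta_\rho h'(\rho) \geq \theta(h') - S_D$, because $D\supseteq D_-$ forces $\theta_\rho>0$ for $\rho\notin D$ while $h'\leq h$.

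This is exactly where ``$D$ big enough'' enters. If $\FF$ is $\mu_\theta$-stable (equivalently $\theta$-stable, by Lemma \ref{comparison}), then $\theta(h')>0$ for every Hilbert function $h'$ of a proper non-zero submodule generated in $D_-$; by Proposition \ref{finiteness} there are only finitely many such $h'$, so there is a uniform $\epsilon>0$ with $\theta(h')\geq\epsilon$. Enlarging $D$ until $S_D<\epsilon$ then makes the displayed quantity strictly positive for every such $h'$, whence $\mu(\lambda,[q])>0$ for every non-trivial $\lambda$ and $[q]\in\GQuot_{D}^{s}$; this yields $\Omega^G(\HH,h)_{\theta}^{s}\subseteq\GQuot_{D}^{s}$. (The strictness $\theta(h')>0$ is essential: for a merely $\theta$-semistable $\FF$ some $\theta(h')$ may vanish, and then no enlargement of $D$ controls the sign of $\theta_D(h')$, which is the source of the discrepancy in Proposition A.) The middle inclusion $\GQuot_{D}^{s}\subseteq\GQuot_{D}^{ss}$ is immediate. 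For the last inclusion $\GQuot_{D}^{ss}\subseteq\Omega^G(\HH,h)$, I would show that any point failing to originate from its constellation is GIT-unstable: if $\varphi_\rho$ as in \eqref{iso} is not an isomorphism then it has a non-zero kernel $K_\rho$, and since $K_\rho\otimes V_\rho\otimes\OO_X\subseteq\Ker q$, the one-parameter subgroup supported on $K_\rho$ (with a determinant-compensating weight on a complement) acquires a strictly negative Hilbert-Mumford weight.

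Finally, $\Gamma$-stability of all four sets is automatic, GIT-(semi)stability being a $\Gamma$-invariant notion and $\Omega^G(\HH,h)$ having already been seen to be $\Gamma$-stable, while openness of $\GQuot_{D}^{(s)s}$ is part of the general GIT formalism and openness of $\Omega^G(\HH,h)$ is Lemma \ref{omega}. The step I expect to be the main obstacle is the explicit weight computation: one must verify that the linearization prescribed by \eqref{choicechikappa} produces precisely the functional $\theta_D(h')$ plus a manifestly non-negative remainder, rather than some nearby functional, and this requires carefully bookkeeping the $\chi$-twist against the determinant-one constraint defining $\Gamma$. It is exactly this arithmetic that was mis-recorded in \cite[\S 3.3]{BT15}, and getting it right is what makes the uniform comparison with $\theta$, and hence the choice of $D$, go through.
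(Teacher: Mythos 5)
Your proposal is correct in substance but proceeds quite differently from the paper, whose entire proof is a one-line citation: the first inclusion is \cite[Theorem 3.10]{BT15}, the third is \cite[Lemma 3.1]{BT15}, and the openness and $\Gamma$-stability assertions come from the definition of GIT-(semi)stability together with \cite[Proposition 4.1]{BT15}. What you have written is, in effect, a reconstruction of the content of those citations, and the arithmetic at its core checks out against formulas the paper does state: after substituting \eqref{choicechikappa} and using $\chi(h)=0$, the Hilbert--Mumford weight of the one-parameter subgroup attached to a saturated graded subspace $A'$ generating $\FF'$ (with Hilbert function $h'$) equals
$$r(h)\left(\sum_{\rho\in D}\theta_\rho h'(\rho)+\frac{S_D}{d}\sum_{\rho\in D\setminus D_-}\frac{h'(\rho)}{h(\rho)}\right),$$
which is precisely $\dim A\cdot(\kappa_D(\FF')+\chi(A'))-\dim A'\cdot\kappa(h)$ from Proposition \ref{HMcalculation}, i.e.\ $-r(h)\,r(h')\,\mu_D(h')$ in the notation of \eqref{form}. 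Your reduction from arbitrary graded subspaces to saturated ones is Corollary \ref{corostab}; your uniform bound $\theta(h')\geq\epsilon$ via Proposition \ref{finiteness} is exactly the mechanism that makes the choice of $D$ uniform over all $\theta$-stable points, which is what the statement requires; your destabilizing subgroup supported on $\Ker\varphi_\rho$ (using $\chi_\rho<\kappa(h)/r(h)$, equivalent to $\theta_\rho<\kappa_\rho$) is the right argument for the third inclusion; and your parenthetical remark that strictness of $\theta(h')>0$ is indispensable correctly identifies why no analogous statement holds for semistable points (Proposition A and \S\ref{examples}). The trade-off: the paper's proof is short but opaque, while yours is self-contained; conversely, you re-derive the one-parameter-subgroup/filtration dictionary and the weight computation, which the paper has already packaged as Proposition \ref{HMcalculation} (= \cite[Proposition 2.11]{BT15}) and which you could simply have invoked.

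Two small points to repair. First, the ``moreover'' clause asks for openness of all four sets, and you never address openness of $\Omega^G(\HH,h)_{\theta}^{s}$ in $\GQuot$; the paper obtains it from Lemma \ref{comparison} together with \cite[\S 4.1]{BT15}, as stated in the sentence preceding the theorem, so a citation or an argument is needed there. Second, in the passage from single subspaces to general one-parameter subgroups you should note that the single-subspace functional vanishes at $A'=A$ (indeed $\sum_{\rho\in D}\theta_\rho h(\rho)+S_D=0$), so that for a non-trivial $\lambda$ the Abel summation expresses $\mu_{\Ll_\chi}(q,\lambda)$ as a combination, with strictly positive coefficients, of functionals evaluated at \emph{proper non-zero} subspaces; this is what upgrades your ``non-negative combination'' to the strict positivity required for GIT-stability.
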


\begin{proof}
The first inclusion is \cite[Theorem 3.10]{BT15}, the third inclusion is \cite[Lemma 3.1]{BT15}, and the last statement follows from the 
definition of GIT-(semi)stability and from \cite[Proposition 4.1]{BT15}.
\end{proof}

Since the set of GIT-(semi)stable points of $\Omega^G(\HH,h)$ is stable under the action of $\Gamma$, it makes sense to talk about 
GIT-(semi)stable $(G,h)$-constellations instead of GIT-(semi)stable quotients. Indeed, if $\FF$ is a $(G,h)$-constellation and there 
exist isomorphisms $\psi_{\rho}\colon A_{\rho} \to \FF_{\rho}$ (see the beginning of \S \ref{origin}) such that the 
corresponding quotient $[q:\HH \twoheadrightarrow \FF]$ is GIT-(semi)stable, then the same is true for any other choice of isomorphisms 
(by $\Gamma$-stability of the set of GIT-(semi)stable points).

Therefore, the GIT-stability can be seen as a stability condition on $(G,h)$-constellations. 
A set-theoretical version of Theorem \ref{relation} is given by
\begin{corollary}
Let $\mut$ be the stability condition of Definition \ref{definitionstability}. Then for any finite subset $D \subset \Irr G$ big enough, we have the inclusions 
\begin{small}
\begin{align*}
\left\{\begin{array}{c}\mu_\theta\text{--stable }\\(G,h)\text{--constellations}\end{array}\right\}
\subseteq \left\{\begin{array}{c}\text{GIT--stable }\\(G,h)\text{--constellations} \end{array}\right\}
\subseteq \left\{\begin{array}{c}\text{GIT--semistable }\\(G,h)\text{--constellations} \end{array}\right\} \; .
\end{align*}
\end{small}
\end{corollary}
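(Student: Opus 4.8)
The plan is to deduce the corollary directly from Theorem \ref{relation} by passing from quotients in $\GQuot$ to the $(G,h)$-constellations they parametrize. The essential tool is the dictionary set up in \S \ref{gamma1}: the $\Gamma$-orbits contained in $\Omega^G(\HH,h)$ are in one-to-one correspondence with the isomorphy classes of $(G,h)$-constellations. Everything reduces to translating the chain of inclusions $\Omega^G(\HH,h)_{\theta}^{s} \subseteq \GQuot_{D}^{s} \subseteq \GQuot_{D}^{ss}$ through this correspondence.

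First I would record that each of the three leftmost sets appearing in Theorem \ref{relation} is $\Gamma$-stable, as asserted there. Combined with the third inclusion $\GQuot_{D}^{ss} \subseteq \Omega^G(\HH,h)$, this guarantees that all three sets consist of full $\Gamma$-orbits lying inside $\Omega^G(\HH,h)$. Consequently each of them descends, via the dictionary above, to a well-defined set of isomorphy classes of $(G,h)$-constellations.

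Next I would identify each quotient locus with its constellation counterpart. By its very definition, $\Omega^G(\HH,h)_{\theta}^{s}$ is the locus of quotients $[q\colon \HH \twoheadrightarrow \FF]$ for which $\FF$ is $\mu_\theta$-stable, so it corresponds precisely to the set of $\mu_\theta$-stable constellations. For the remaining two sets, I would invoke the paragraph following Theorem \ref{relation}: since the GIT-(semi)stable locus is $\Gamma$-stable, the GIT-(semi)stability of a quotient originating from $\FF$ does not depend on the choice of the isomorphisms $\psi_\rho\colon A_\rho \to \FF_\rho$. Hence GIT-(semi)stability is a well-defined property of the constellation $\FF$ itself, and $\GQuot_{D}^{s}$, $\GQuot_{D}^{ss}$ correspond respectively to the GIT-stable and GIT-semistable constellations.

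Feeding the inclusions of Theorem \ref{relation} through this identification then yields exactly the asserted inclusions of sets of constellations, for any $D$ big enough in the sense of Theorem \ref{relation}. There is no genuine obstacle here, as the substantive content is already packaged in Theorem \ref{relation}; the only points requiring attention are the $\Gamma$-stability of each set, which legitimizes the passage to orbits, and the inclusion $\GQuot_{D}^{ss} \subseteq \Omega^G(\HH,h)$, which ensures that every GIT-semistable quotient actually originates from a constellation and is thus captured by the dictionary.
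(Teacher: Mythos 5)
Your proposal is correct and follows essentially the same route as the paper: the paper likewise treats the corollary as the ``set-theoretical version'' of Theorem \ref{relation}, using the $\Gamma$-stability of the GIT-(semi)stable loci (and the inclusion $\GQuot_{D}^{ss} \subseteq \Omega^G(\HH,h)$) to make GIT-(semi)stability a well-defined condition on isomorphy classes of $(G,h)$-constellations, and then transporting the inclusions of Theorem \ref{relation} through the $\Gamma$-orbit dictionary of \S \ref{gamma1}--\ref{origin}. No gaps; the identifications you single out are exactly the ones the paper relies on.
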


\subsection{\texorpdfstring{$\mu_D$}{mu-D}-stability and \texorpdfstring{$\mu_D$}{mu-D}-Harder-Narasimhan filtration} \label{HN2}

In this subsection we introduce a new stability condition on the $(\OO_X,G)$-modules generated in $D_-$, the $\mu_D$-stability, 
which will be proved to coincide with the GIT-stability for $(G,h)$-constellations (Corollary \ref{muDisGIT}). 
This reformulation of the GIT-stability in terms of the slope $\mu_{D}$ will ultimately allow us to construct another Harder-Narasimhan filtration for $(G,h)$-constellations (Theorem \ref{HNDexists}). 

We fix a finite subset $D \subset \Irr G$ satisfying Hypothesis \ref{hypoD} and we keep the notation introduced in \S\S \ref{2.1}--\ref{gamma2}. 

\begin{lemma} \emph{(\cite[\S 2.3]{BT15})}
Let $[q:\HH \twoheadrightarrow \FF] \in \GQuot$ and let $\lambda: \CC^* \to \Gamma$ be a $1$-parameter subgroup. Then 
$$[\overline q]:=\lim_{t \to \infty} [q]\cdot \lambda(t)$$ 
is a well-defined element of $\GQuot$, which 
is a fixed point for the action of $\lambda$. In particular, $\lambda$ acts linearly on the fibre $\Ll_\chi(\overline q)$, where $\Ll_\chi$ is the $\Gamma$-linearized ample line bundle on $\GQuot$ defined at the end of \S \ref{gamma1}.
\end{lemma}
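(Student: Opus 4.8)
The plan is to reduce the statement to the elementary geometry of a $\CC^*$-action on a projective space, using the closed immersion provided by Hypothesis \ref{hypoD}. By Proposition \ref{embedding_etha} together with Hypothesis \ref{hypoD}, $\eta$ realizes $\GQuot$ as a closed subscheme of the proper scheme $P:=\prod_{\rho \in D}\PP(\Lambda^{h(\rho)}H_\rho)$, and the $\Gamma$-action on $\GQuot$ is induced by the linear $\Gamma$-action on the spaces $H_\rho$ underlying the $\Gamma$-linearization of $\Ll$ constructed at the end of \S \ref{gamma1}. First I would make this equivariance precise, so that $\eta$ becomes $\Gamma$-equivariant for the induced projective action on $P$; in particular the $1$-parameter subgroup $\lambda$ then acts linearly on each $\Lambda^{h(\rho)}H_\rho$.

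Next I would produce the limit explicitly. Choosing for each $\rho\in D$ a basis $(e_{\rho,i})_i$ of $\Lambda^{h(\rho)}H_\rho$ made of $\lambda$-eigenvectors, say $\lambda(t)e_{\rho,i}=t^{w_{\rho,i}}e_{\rho,i}$, and writing $\eta([q])=([\,\sum_i v_{\rho,i}e_{\rho,i}\,])_{\rho\in D}$, the orbit is the morphism $\CC^*\to P$ given by $t\mapsto ([\,\sum_i t^{w_{\rho,i}}v_{\rho,i}e_{\rho,i}\,])_\rho$. Dividing the $\rho$-th factor by the highest power of $t$ appearing with a nonzero coefficient shows that the limit as $t\to\infty$ exists in $P$ and keeps, in each factor, exactly the components $v_{\rho,i}$ of maximal weight. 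Conceptually this is just the valuative criterion of properness applied to the local ring at $\infty$: the orbit map $\CC^*\to P$ extends to a morphism defined at $\infty\in\PP^1$, whose value at $\infty$ is $[\overline q]$.

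I would then check that $[\overline q]$ is a well-defined point of $\GQuot$ and is $\lambda$-fixed. Since $\GQuot$ is $\Gamma$-stable, the orbit $[q]\cdot\lambda(t)$ lies in $\GQuot$ for every $t\in\CC^*$; as $\eta(\GQuot)$ is closed in $P$, its limit $[\overline q]$ lies in $\eta(\GQuot)$, so $[\overline q]\in\GQuot$. The fixed-point property follows either from the explicit description (each factor of $[\overline q]$ is the class of a single $\lambda$-eigenvector, hence $\lambda$-invariant in $\PP(\Lambda^{h(\rho)}H_\rho)$) or from the semigroup computation $[\overline q]\cdot\lambda(t_0)=\lim_{t\to\infty}[q]\cdot\lambda(t t_0)=[\overline q]$ for all $t_0\in\CC^*$, which is valid by continuity of the action together with $\lambda(t)\lambda(t_0)=\lambda(t t_0)$.

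Finally, since $[\overline q]$ is fixed by $\lambda$ and $\Ll_\chi$ carries a $\Gamma$-linearization, $\lambda$ acts on the one-dimensional fibre $\Ll_\chi(\overline q)$; an action of $\CC^*$ on a line is automatically linear, hence given by a character $t\mapsto t^{r}$ for some $r\in\ZZ$, which is precisely the weight that will feed into the Hilbert--Mumford numerical function. The main obstacle, I expect, is not any of these formal steps but rather pinning down the equivariance in the first paragraph: one must verify that the $\Gamma$-action on $\GQuot$ genuinely comes from a linear representation of $\Gamma$ on the $H_\rho$ compatible with $\eta$, rather than being only an abstract action on the scheme. Once that compatibility is secured, the existence of the limit, its membership in $\GQuot$, and the fixed-point property are all formal consequences of properness, and the linear action on the fibre is immediate from the linearization.
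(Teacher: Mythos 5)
The paper offers no proof of this lemma: it is imported verbatim from \cite[\S 2.3]{BT15} (hence the citation in the statement), so the only comparison available is with the argument given there. Your proposal is correct and follows the standard GIT route, which is also what underlies \cite{BT15}: embed $\GQuot$ into $P=\prod_{\rho\in D}\PP(\Lambda^{h(\rho)}H_\rho)$, diagonalize $\lambda$, observe that the limit as $t\to\infty$ retains exactly the maximal-weight components, use closedness of $\eta(\GQuot)$ to see that the limit stays in $\GQuot$, and deduce the fixed-point and fibre statements formally. The difference is that \cite{BT15} proves more than the lemma asks: there the limit is identified explicitly as the quotient of $\HH$ onto the associated graded sheaf for the filtration induced by the $\lambda$-weight decomposition of $A=\bigoplus_{\rho\in D_-}A_\rho$, and it is this explicit description (not the mere existence of the limit) that feeds into the weight formula recalled as Proposition \ref{HMcalculation}. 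Your softer argument proves the lemma as stated, but would not by itself yield that formula.

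Two refinements. First, the ``main obstacle'' you single out --- equivariance of $\eta$ --- can be bypassed entirely for this statement. Under Hypothesis \ref{hypoD} the morphism $\eta$ is a \emph{closed} immersion into the projective scheme $P$, so $\GQuot$ is itself proper; the valuative criterion applied directly to the orbit morphism $\CC^*\to\GQuot$, $t\mapsto [q]\cdot\lambda(t)$, together with separatedness, produces the limit $[\overline q]\in\GQuot$ with no reference at all to how the $\Gamma$-action interacts with $\eta$. The fixed-point property then follows from your semigroup argument ($[\overline q]\cdot\lambda(t_0)=\lim_{t\to\infty}[q]\cdot\lambda(tt_0)=[\overline q]$), and the linear action on $\Ll_\chi(\overline q)$ from the existence of the $\Gamma$-linearization alone. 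Second, note that the linearization of $\Ll$ as constructed at the end of \S \ref{gamma1} of this paper is obtained abstractly (via the remark after \cite[Lemma 4.3.2]{HL10}), not from a declared linear $\Gamma$-action on the $H_\rho$; the equivariant picture you describe is the one set up in \cite{BT15}, where $H_\rho$ is built functorially from $\HH$ so that the $\Gamma$-action on $\HH$ induces a linear action on $H_\rho$ making $\eta$ equivariant. So your instinct that this compatibility is the delicate point is right if one wants the explicit eigenweight description of the limit, but for the bare statement of the lemma, properness plus continuity already suffice.
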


\begin{definition}
Given $[q \colon \HH \twoheadrightarrow \FF] \in \GQuot$ 
and  a $1$-parameter subgroup $\lambda$ of $\Gamma$, we denote by $\mu_{\Ll_{\chi}}(q,\lambda)$ the weight for the action of $\lambda$ on the 
fiber $\Ll_{\chi}([\overline{q}])$.
\end{definition}

In our situation, the Hilbert-Mumford numerical criterion \cite[Theorem 2.1]{GIT:1994} can be formulated as follows:

\begin{theorem}\label{HMcriterion} 
The point $[q \colon \HH \twoheadrightarrow \FF] \in \GQuot$ is GIT-(semi)stable if and only if $\mu_{\Ll_{\chi}}(q,\lambda) \sgeq 0$ for all non-trivial $1$-parameter subgroups $\lambda\colon \CC^* \to \Gamma$. 
\end{theorem}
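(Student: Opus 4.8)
The plan is to deduce the statement directly from the classical Hilbert--Mumford criterion \cite[Theorem 2.1]{GIT:1994}; the only real work is to put our setting into the hypotheses of that theorem and to reconcile them with the conventions defining $\mu_{\Ll_\chi}(q,\lambda)$.

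First I would use Hypothesis \ref{hypoD} to observe that $\eta$ in \eqref{Quotimmersion} is a \emph{closed} immersion, so that $\GQuot$ is a closed subscheme of the projective variety $P:=\prod_{\rho\in D}\PP(\Lambda^{h(\rho)}H_\rho)$ and $\Ll=\eta^*\bigl(\bigotimes_{\rho\in D}\OO_\rho(\kappa_\rho)\bigr)$ is ample on it. The group $\Gamma$ is reductive, being the kernel of the product-of-determinants character on the reductive group $\prod_{\rho\in D_-}\GL(A_\rho)$, and $\Ll_\chi$ is a $\Gamma$-linearised ample line bundle by construction (end of \S \ref{gamma1}). We are thus exactly in the situation of \cite[Theorem 2.1]{GIT:1994}: a point of $\GQuot$ is GIT-(semi)stable with respect to $\Ll_\chi$ if and only if Mumford's numerical function is $\sgeq 0$ on every non-trivial $1$-parameter subgroup of $\Gamma$. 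Since the relevant limit can be computed inside the projective ambient $P$ and lands back in the closed subscheme $\GQuot$, no completeness issue arises.

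It then remains to identify Mumford's numerical function with $\mu_{\Ll_\chi}(q,\lambda)$ as defined just above. For a non-trivial $1$-parameter subgroup $\lambda$, the limit $[\bar q]=\lim_{t\to\infty}[q]\cdot\lambda(t)$ exists by the preceding lemma, is $\lambda$-fixed, and hence $\lambda$ acts on the fibre $\Ll_\chi([\bar q])$ by a character $t\mapsto t^{w}$; the weight $w$ is, up to the sign built into Mumford's definition, his numerical invariant, and the definition of $\mu_{\Ll_\chi}(q,\lambda)$ records precisely $w$. The one genuinely delicate point, which I expect to be the main obstacle, is the bookkeeping of three conventions that each contribute a potential sign: the action of $\Gamma$ on $\GQuot$ is a \emph{right} action (\S \ref{gamma1}), the defining limit is taken as $t\to\infty$ rather than $t\to 0$, and the linearisation has been twisted by $\chi$. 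One must check that these three signs combine so that the criterion reads exactly $\mu_{\Ll_\chi}(q,\lambda)\sgeq 0$, with the strict inequality governing stability and the non-strict one semistability, in agreement with \cite[Theorem 2.1]{GIT:1994}. Once this alignment is verified, the statement is a verbatim translation of the classical criterion, and no further computation is needed.
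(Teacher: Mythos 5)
Your proposal is correct and takes essentially the same route as the paper: the paper gives no separate argument at all, simply presenting Theorem \ref{HMcriterion} as the formulation of the classical Hilbert--Mumford criterion \cite[Theorem 2.1]{GIT:1994} in this setting, which is exactly what you do. The points you verify or flag---properness of $\GQuot$ via the closed immersion of Hypothesis \ref{hypoD}, reductivity of $\Gamma$, the $\Gamma$-linearized ample bundle $\Ll_\chi$, and the sign bookkeeping between the right action, the limit $t\to\infty$, and the twist by $\chi$---are precisely the details the paper leaves implicit (the last of these being settled in \cite[\S 2.3]{BT15}, whence the paper imports both the limit lemma and the weight computation of Proposition \ref{HMcalculation}).
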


After computing the weight $\mu_{\Ll_{\chi}}(q,\lambda)$ in terms of the GIT parameters of \S \ref{gamma2}, we can rewrite the Hilbert-Mumford 
numerical criterion as follows:

\begin{proposition}\label{HMcalculation} \emph{(\cite[Proposition 2.11]{BT15})}
The point $[q:\HH \twoheadrightarrow \FF]\in\GQuot$ is GIT-(semi)stable if
and only if for all graded subspaces $0\neq A'\subsetneq A$, that is $A'=\oplus_{\rho\in D_{-}}A_{\rho}'$ with $A_{\rho}'\subseteq A_{\rho}$ 
for every $\rho \in D_-$, the inequality
\begin{equation} \label{ek}
\dim A\cdot (\kappa_{D}(\FF')+\chi(A'))-\dim A'\cdot \kappa(h)\sgeq 0
\end{equation}
holds, where $\FF':=q\left(\oplus_{\rho\in D_{-}}A_{\rho}'\otimes V_{\rho}\otimes \OO_{X}\right)$, 
and $A$, $\kappa_{D}$, and $\chi$ are defined in \S \ref{gamma2}.
\end{proposition}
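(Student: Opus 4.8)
The plan is to deduce the numerical inequality \eqref{ek} directly from the Hilbert--Mumford criterion (Theorem \ref{HMcriterion}) by computing the weight $\mu_{\Ll_{\chi}}(q,\lambda)$ explicitly and then showing that testing all $1$-parameter subgroups $\lambda\colon \CC^* \to \Gamma$ is equivalent to testing only the ``elementary'' ones attached to graded subspaces of $A$. First I would reduce to diagonal $\lambda$: since every $1$-parameter subgroup is conjugate into a maximal torus of $\Gamma = \prod_{\rho \in D_-}\GL(A_\rho)$, after choosing suitable bases of the $A_\rho$ we may assume $\lambda(t)$ acts diagonally with integer weights on the chosen basis vectors. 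Grouping the basis vectors of $A=\bigoplus_{\rho\in D_-}A_\rho$ by weight produces a flag $0 = A^{(0)} \subsetneq A^{(1)} \subsetneq \cdots \subsetneq A^{(s)} = A$ of \emph{graded} subspaces $A^{(i)} = \bigoplus_{\rho \in D_-} A^{(i)}_\rho$, each stable under the torus.

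The core of the argument is the weight computation itself. Because $\eta$ is a closed immersion and $\Ll_{\chi} = \eta^*\!\left(\bigotimes_{\rho \in D}\OO_{\rho}(\kappa_\rho)\right)$ twisted by $\chi$, the weight $\mu_{\Ll_{\chi}}(q,\lambda)$ splits as a Pl\"ucker contribution from each factor $\PP(\Lambda^{h(\rho)}H_\rho)$ ($\rho \in D$) plus the weight $\langle \chi, \lambda\rangle$ of the character twist. For the elementary $1$-parameter subgroup $\lambda_{A'}$ attached to a single graded subspace $A'$ — normalized to lie in $\Gamma$ by giving weight $\dim A - \dim A'$ on $A'$ and $-\dim A'$ on a graded complement, so that its total determinant weight vanishes — the $\Gamma$-action on $A$ propagates to each $H_\rho$, and at the limit point $[\overline q]$ the surviving Pl\"ucker coordinate is the one spanned by the isotypic component $\FF'_\rho$ of the subsheaf $\FF' = q(\bigoplus_{\rho\in D_-}A'_\rho \otimes V_\rho \otimes \OO_X)$ generated by $A'$. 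A direct count then yields a Pl\"ucker weight $\sum_{\rho \in D}\kappa_\rho(\dim A \cdot \dim\FF'_\rho - \dim A' \cdot h(\rho)) = \dim A\cdot \kappa_{D}(\FF') - \dim A'\cdot\kappa(h)$, while $\langle\chi,\lambda_{A'}\rangle = \dim A\cdot\chi(A') - \dim A'\cdot\chi(h)$; using $\chi(h) = \theta(h) = 0$, which follows from the values \eqref{choicechikappa}, these sum to exactly the left-hand side of \eqref{ek}.

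Finally I would reduce from all $\lambda$ to the elementary ones. For a general diagonal $\lambda$ with weights $\gamma_1 > \cdots > \gamma_s$ along the flag, the weight $\mu_{\Ll_{\chi}}(q,\lambda)$ is, by the same local description, a linear expression in the $\gamma_i$ governed by the subsheaves generated by the $A^{(i)}$; an Abel summation rewrites it as the non-negative combination $\sum_i (\gamma_i - \gamma_{i+1})\,\mu_{\Ll_{\chi}}(q,\lambda_{A^{(i)}})$. Hence the inequalities \eqref{ek} for all graded $A'$ force $\mu_{\Ll_{\chi}}(q,\lambda) \geq 0$ for every $\lambda$ in the semistable case, the strict versions handling the stable case; conversely each $\lambda_{A'}$ is itself a $1$-parameter subgroup, so GIT-(semi)stability of $[q]$ implies \eqref{ek} for every $A'$. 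This establishes the equivalence in both directions.

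The main obstacle will be the Pl\"ucker weight computation: one must unwind the construction of the embedding $\eta$ from \cite[\S 2.1]{BT15} to see precisely how the $\Gamma$-action on $A$ acts on each $H_\rho$, and to identify the weight of the limiting Pl\"ucker coordinate with the multiplicities $\dim\FF'_\rho$ of the \emph{generated} subsheaf $\FF'$ rather than with a naive dimension count on $A'$. Controlling this passage to the limit $[\overline q]=\lim_{t\to\infty}[q]\cdot\lambda(t)$, together with the sign bookkeeping forced by that limit convention, is where the real work lies; the reduction via summation by parts is then formal.
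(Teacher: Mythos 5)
The paper contains no proof of this proposition: as the citation in its statement indicates, it is imported wholesale from \cite[Proposition 2.11]{BT15}, so the only comparison available is with the argument underlying that reference, which is the King-style Hilbert--Mumford computation. Your reconstruction is exactly that argument: diagonalize $\lambda$ in a maximal torus of $\Gamma$, compute the weight at the limit point $[\overline{q}]$ as a sum of Pl\"ucker contributions over $\rho \in D$ plus the character twist, and reduce an arbitrary $\lambda$ to the elementary $\lambda_{A'}$ attached to the graded pieces of its weight flag by summation by parts, using that conjugating $\lambda$ only replaces $A'$ by another graded subspace. Your numerical bookkeeping is also consistent with the paper's normalizations \eqref{choicechikappa}: one checks
$\chi(h)=\sum_{\rho\in D_-}\chi_\rho h(\rho)=\sum_{\rho\in D}\theta_\rho h(\rho)+S_D=\theta(h)=0$,
so the weight of $\lambda_{A'}$ collapses to $\dim A\cdot\bigl(\kappa_D(\FF')+\chi(A')\bigr)-\dim A'\cdot\kappa(h)$, which is precisely the left-hand side of \eqref{ek}; and your identification of the surviving Pl\"ucker coordinate with a basis adapted to $\FF'_\rho \subseteq \FF_\rho$, giving the count $\kappa_\rho\bigl(\dim A\cdot\dim\FF'_\rho-\dim A'\cdot h(\rho)\bigr)$ per factor, is the correct (and, as you note, the genuinely laborious) step hidden in the construction of $\eta$ in \cite{BT15}.
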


We follow the notation of \S \ref{origin}. If $[q: \HH \twoheadrightarrow \FF]\in \Omega^G(\HH,h)$, then we
may establish a correspondence between subsheaves of $\FF$ generated in $D_-$ and certain
graded subspaces of $A$. Let  $A' \subseteq A$ be a graded subspace, and let 
\begin{equation}\label{FausA}
\FF' := q \left( \bigoplus_{\rho \in D_-} A_{\rho}' \otimes V_{\rho} \otimes \OO_X \right) =  \OO_X \cdot \left( \sum_{\rho \in D_-} \varphi_{\rho}(A_{\rho}')(V_\rho)\right)
\end{equation}
be the $(\OO_X,G)$-submodule of $\FF$ generated by the $\varphi_{\rho}(A_{\rho}')$. 
Since $\varphi_{\rho}|_{A_{\rho}'}$ is injective, we have $\dim A_{\rho}' \leq \dim \FF_{\rho}'$ for every $\rho \in D_-$.
Now we define
$$\widetilde A_{\rho}' := \varphi_{\rho}^{-1}(\FF_{\rho}') \qquad \text{\ \ \ and\ \ \ } \qquad \widetilde A' := \bigoplus_{\rho \in D_-} \widetilde A_{\rho}'\ .$$
Roughly speaking, $\widetilde A'$ is the biggest graded subspace of $A$ which generates $\FF'$. 
For this reason, we call $\widetilde{A'}\subseteq A$ \emph{the saturation of} $A'$.

\begin{corollary}  \label{corostab}
The point $[q:\HH \twoheadrightarrow \FF]\in \Omega^G(\HH,h)$ is GIT-(semi)stable if
and only if inequality \eqref{ek} holds for all \emph{saturated} graded subspaces of $A$.
\end{corollary}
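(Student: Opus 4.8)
The plan is to deduce the corollary directly from Proposition \ref{HMcalculation}, which already characterises GIT-(semi)stability of $[q:\HH \twoheadrightarrow \FF]$ by the inequality \eqref{ek} ranging over \emph{all} graded subspaces $0 \neq A' \subsetneq A$. One implication is then immediate: if \eqref{ek} holds for all graded subspaces, it holds in particular for the saturated ones. The entire content is the converse, namely that testing \eqref{ek} on saturated subspaces suffices. To prove it I would fix an arbitrary graded subspace $0 \neq A' \subsetneq A$ and compare the left-hand side of \eqref{ek} evaluated at $A'$ with its value at the saturation $\widetilde{A'}$.

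The key observation is that $A'$ and $\widetilde{A'}$ generate the \emph{same} submodule $\FF'$. Indeed, by \eqref{FausA} the submodule generated by $A'$ is $\FF'$, while since $[q]\in\Omega^G(\HH,h)$ each $\varphi_\rho$ is an isomorphism onto $\FF_\rho$, so $\varphi_\rho(\widetilde{A'}_\rho)=\FF'_\rho$ and the submodule generated by $\widetilde{A'}$ is again $\FF'$ (which is generated in $D_-$); in particular $\widetilde{A'}$ is itself saturated. Consequently the term $\kappa_D(\FF')$ in \eqref{ek} is unchanged when passing from $A'$ to $\widetilde{A'}$, and writing $e(A')$ for the left-hand side of \eqref{ek} one gets
$$e(A') - e(\widetilde{A'}) = \sum_{\rho \in D_-} \left(\dim \widetilde{A'}_\rho - \dim A'_\rho\right)\left(\kappa(h) - \dim A \cdot \chi_\rho\right).$$

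The main (though short) step is to pin down the sign of this difference using the explicit GIT parameters \eqref{choicechikappa}. Since $\dim A = r(h)$ and $\chi_\rho = \theta_\rho - \kappa_\rho + \kappa(h)/r(h)$ for $\rho \in D_-$, a direct substitution gives $\kappa(h) - \dim A \cdot \chi_\rho = r(h)(\kappa_\rho - \theta_\rho)$, which is strictly positive because $\kappa_\rho \in \QQ_{>0}$ and $\theta_\rho < 0$ for $\rho \in D_-$. As $A' \subseteq \widetilde{A'}$ forces $\dim \widetilde{A'}_\rho - \dim A'_\rho \geq 0$ for every $\rho$, this yields $e(A') \geq e(\widetilde{A'})$, with strict inequality as soon as $A' \subsetneq \widetilde{A'}$.

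To conclude I would distinguish two cases. If $\widetilde{A'} \subsetneq A$, then $\widetilde{A'}$ is a nonzero proper saturated subspace, so $e(\widetilde{A'}) \sgeq 0$ by hypothesis, whence $e(A') \geq e(\widetilde{A'}) \sgeq 0$. If instead $\widetilde{A'} = A$, i.e.\ $\FF' = \FF$, then a direct computation from \eqref{choicechikappa} together with $\theta(\FF)=0$ gives $\chi(A)=0$ and $\kappa_D(\FF)=\kappa(h)$, so that $e(A)=0$; since $A' \neq A = \widetilde{A'}$ the inequality is strict and $e(A') > 0$. In both cases \eqref{ek} holds for $A'$, proving the converse. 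The only genuine subtlety is the careful bookkeeping of strict versus non-strict inequalities so that the stable and semistable statements are handled uniformly through the symbol $\sgeq$; in particular the boundary case $\widetilde{A'}=A$ must be settled by the separate computation $e(A)=0$ rather than by invoking the hypothesis on saturated proper subspaces.
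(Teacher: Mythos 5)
Your proof is correct and follows essentially the same route as the paper: both arguments reduce to comparing the left-hand side of \eqref{ek} on $A'$ and on its saturation $\widetilde{A'}$ (using that both generate the same $\FF'$, so $\kappa_D(\FF')$ is unchanged), and both exploit the parameter choice \eqref{choicechikappa} --- your positivity statement $\kappa(h)-\dim A\cdot\chi_\rho = r(h)(\kappa_\rho-\theta_\rho)>0$ is precisely the paper's inequality $\chi_\rho < \kappa(h)/\dim A$. The only difference is that you settle the boundary case $\widetilde{A'}=A$ explicitly via the computation $e(A)=0$, a corner case the paper's proof passes over silently; this is extra care, not a divergence in method.
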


\begin{proof}
The "only if" part is given by Proposition \ref{HMcalculation}. For the "if" part, the proof is analogous to that of \cite[Theorem 3.5]{BT15}.
Let $A' \subseteq A$ be a graded subspace, let $\widetilde{A'}$ be the saturation of $A'$, and let $\FF'$ be the subsheaf of $\FF$ generated by $A'$. 
Assuming that the inequality \eqref{ek} holds for $\widetilde{A'}$, we want to prove that it also holds for $A'$. If $\widetilde{A'}= A'$, then we are done. 
Otherwise, $A' \subsetneq \widetilde{A'}$ and we have 
\begin{align*}
\chi(\widetilde A') - \chi(A')  &= \sum_{\rho \in D_-} \chi_{\rho} \cdot \dim(\widetilde A'/A')_{\rho}\\
&< \sum_{\rho \in D_-} \frac{\kappa(h)}{\dim A} \cdot \dim(\widetilde A'/A')_{\rho}\; , \text{ \ \ by definition of the $\chi_\rho$,}\\
&= \frac{\kappa(h)\cdot \dim(\widetilde A'/A')}{\dim A} = \kappa(h) \cdot \frac{\dim\widetilde A'-\dim A'}{\dim A}.
\end{align*}
It follows that
$$ \dim A \cdot (\kappa_D(\FF') + \chi(A')) - \dim A' \cdot \kappa(h) >  \dim A \cdot (\kappa_D(\FF') + \chi(\widetilde A')) - \dim \widetilde A' \cdot \kappa(h) \geq 0,$$
where the right inequality holds by assumption. Therefore, the inequality \eqref{ek} holds for all graded subspaces $A' \subseteq A$.
\end{proof}

\begin{definition} \label{muDstability}
Let $D \subset \Irr G$ be a finite subset satisfying Hypothesis \ref{hypoD},
and let $\FF$ be an $(\OO_X,G)$-module generated in
$D_-$. We say that $\FF$ is $\mu_D$\emph{-(semi)stable} if, for
all $(\OO_X,G)$-submodules $0 \neq \FF'
\subsetneq \FF$ generated in $D_-$, we have
$$\mu_D(\FF') \sleq \mu_D(\FF)\; ,$$
where
$$\mu_D(\FF'):=\frac{-\kappa_D(\FF')-\chi(\FF')}{r(\FF')}+\frac{\kappa(h)}{r(h)} \ \ \text{($\kappa_D$ and $\chi$ are defined in \S \ref{gamma2})}$$
is the \emph{$D$-slope} of $\FF'$ (and similarly for $\FF$). If $\FF$ is not $\mu_{D}$-semistable, we say that it is \emph{$\mu_{D}$-unstable}. 
\end{definition}

\begin{remark}
Note that the term $\frac{\kappa(h)}{r(h)}$ in the definition of $\mu_D$ is constant and does not affect the comparison 
between slopes in Definition \ref{muDstability}. The reason to place it there is to simplify the comparison in 
\S \ref{comparisonstab} between $\mut$ and $\mu_D$-stability.
\end{remark}

One easily checks that if $\FF$ is a $(G,h)$-constellation, then $\mu_D(\FF)=0$, independently of $D$. 
We saw at the end of \S \ref{gamma2} that we can talk about GIT-(semi)stable $(G,h)$-constellations; the next result makes the connection between GIT-stability and $\mu_D$-stability.  

\begin{corollary} \label{muDisGIT}
Let $\FF$ be a $(G,h)$-constellation, and let $\mu_D$ be the stability condition of Definition \ref{muDstability}.
Then $\FF$ is GIT-(semi)stable if and only if $\FF$ is $\mu_D$-(semi)stable. 
\end{corollary}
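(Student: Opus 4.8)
The plan is to reduce both stability notions to the same numerical inequality and then match them term by term. The starting point is Corollary~\ref{corostab}, which already tells us that a point $[q:\HH \twoheadrightarrow \FF] \in \Omega^G(\HH,h)$ --- and every $(G,h)$-constellation gives rise to such a point, see \S\ref{origin} --- is GIT-(semi)stable if and only if the inequality \eqref{ek} holds for all \emph{saturated} graded subspaces of $A$. So it suffices to recast \eqref{ek}, evaluated on saturated subspaces, as the slope inequality defining $\mu_D$-(semi)stability.

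First I would set up the dictionary between saturated graded subspaces $A' \subseteq A$ and $(\OO_X,G)$-submodules $\FF' \subseteq \FF$ generated in $D_-$. To a saturated $A'$ one associates $\FF' := q(\bigoplus_{\rho \in D_-} A_\rho' \otimes V_\rho \otimes \OO_X)$, which is generated in $D_-$ by construction, as in \eqref{FausA}. Conversely, to a submodule $\FF'$ generated in $D_-$ one associates $A_\rho' := \varphi_\rho^{-1}(\FF_\rho')$; this makes sense because $[q] \in \Omega^G(\HH,h)$, so each $\varphi_\rho$ is an isomorphism. Since $\FF'$ is generated in $D_-$, the submodule generated by this $A'$ is again $\FF'$, whence $\widetilde{A'} = A'$ and $A'$ is saturated. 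I would check that these two assignments are mutually inverse, giving a bijection; the key point, which relies on each $\varphi_\rho$ being an isomorphism, is that $\dim A_\rho' = \dim \FF_\rho'$ for every $\rho \in D_-$ when $A'$ is saturated.

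The numerical heart of the argument is then immediate. For a saturated $\widetilde{A'}$ corresponding to $\FF'$, the identity $\dim \widetilde A_\rho' = \dim \FF_\rho'$ gives $\dim \widetilde{A'} = r(\FF')$ and $\chi(\widetilde{A'}) = \chi(\FF')$, while $\dim A = r(h)$. Substituting into \eqref{ek} and dividing by the positive quantity $r(h)\,r(\FF')$, the inequality becomes
\begin{equation*}
\frac{\kappa_D(\FF') + \chi(\FF')}{r(\FF')} - \frac{\kappa(h)}{r(h)} \sgeq 0,
\end{equation*}
that is $-\mu_D(\FF') \sgeq 0$, i.e.\ $\mu_D(\FF') \sleq 0 = \mu_D(\FF)$, using that $\mu_D(\FF) = 0$ for a $(G,h)$-constellation. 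Thus \eqref{ek} for $\widetilde{A'}$ is exactly the $\mu_D$-slope inequality for $\FF'$.

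Finally I would check that the bijection matches the boundary conditions: $A' = 0$ corresponds to $\FF' = 0$ and $A' = A$ to $\FF' = \FF$ (again because the $\varphi_\rho$ are isomorphisms and both $\FF$ and $\FF'$ are generated in $D_-$), so ranging over the saturated subspaces $0 \neq A' \subsetneq A$ corresponds to ranging over the submodules $0 \neq \FF' \subsetneq \FF$ generated in $D_-$. Combining this with Corollary~\ref{corostab} yields the desired equivalence of GIT-(semi)stability and $\mu_D$-(semi)stability. The main obstacle here is not any single hard estimate but rather being careful with the correspondence between saturated subspaces and submodules --- in particular ensuring it is a bijection respecting the strict and non-strict inclusions --- and with the translations $\chi(\widetilde{A'}) = \chi(\FF')$ and $\dim \widetilde{A'} = r(\FF')$, all of which hinge on the $\varphi_\rho$ being isomorphisms, i.e.\ on working inside $\Omega^G(\HH,h)$.
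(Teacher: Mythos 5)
Your proposal is correct and follows essentially the same route as the paper's own proof: the paper likewise invokes the correspondence between saturated graded subspaces and $(\OO_X,G)$-submodules generated in $D_-$, records the equalities $\dim \widetilde{A'}=r(\FF')$ and $\chi(\widetilde{A'})=\chi(\FF')$, and concludes by Corollary \ref{corostab}. You simply spell out the details the paper leaves implicit (the bijection, the division of \eqref{ek} by $r(h)\,r(\FF')$, and the matching of the boundary cases), all of which check out.
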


\begin{proof}
We have seen a little bit earlier that there is a correspondence between saturated graded subspaces $0 \subsetneq \widetilde{A'} \subsetneq A$ and 
$(\OO_X,G)$-submodules $0 \subsetneq \FF' \subsetneq \FF$ generated in $D_-$. 
In particular, we have $\dim \widetilde{A'}=r(\FF')$, $\chi(\widetilde{A'})=\chi(\FF')$, and $\kappa_D(\widetilde{A'})=\kappa_D(\FF')$. 
Then the result follows from Corollary \ref{corostab}.
\end{proof}

We can finally state the main result of this subsection:

\begin{theorem} \label{HNDexists}
Let $D \subset \Irr G$ be a finite subset satisfying Hypothesis \ref{hypoD}, and let $\mu_D$ be as in Definition \ref{muDstability}. 
Let $\FF$ be a $(G,h)$-constellation. Then $\FF$ has a unique filtration
$$0\subsetneq \GG^D_{1}\subsetneq \GG^D_{2}\subsetneq \cdots \subsetneq \GG^D_{p_D} \subsetneq \GG^D_{p_D+1}=\FF$$
verifying
\begin{enumerate}[(i)]
 \item each $\GG^D_i$ is an $(\OO_X,G)$-submodule generated in $D_-$;
 \item each quotient $\GG^{D,i}:=\GG^D_{i} / \GG^D_{i-1}$ is $\mu_D$-semistable; and
 \item the slopes of the quotients are strictly decreasing
 $$\mu_D(\GG^{D,1})>\mu_D(\GG^{D,2})>\cdots >\mu_D(\GG^{D,p_D})>\mu_D(\GG^{D,p_D+1}).$$
\end{enumerate}
We call this filtration the \emph{$\mu_D$-Harder-Narasimhan filtration} ($\mu_D$-HN filtration for short) of $\FF$.
\end{theorem}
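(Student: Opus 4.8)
The plan is to follow verbatim the strategy used in \S \ref{HN1} to establish the existence and uniqueness of the $\mut$-HN filtration, replacing the slope $\mut$ by $\mu_D$ throughout. The observation that makes this transcription possible is that $\mu_D$ has the same formal shape as $\mut$: writing $\Theta_D(\FF') := \kappa_D(\FF') + \chi(\FF')$, one has $\mu_D(\FF') = -\Theta_D(\FF')/r(\FF') + \kappa(h)/r(h)$, where both $\Theta_D$ and $r$ are additive along short exact sequences (Hilbert functions add, $G$ being reductive) while the term $\kappa(h)/r(h)$ is a constant that plays no role in any comparison of slopes. Thus, up to an irrelevant additive constant, $\mu_D$ is a quotient of two additive quantities, exactly as $\mut$ is. Recall also that every $(\OO_X,G)$-module generated in $D_-$ is a $(G,\tilde h)$-constellation for some $\tilde h$, so the finiteness result of Proposition \ref{finiteness}, being independent of the stability notion, applies directly.

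First I would record the seesaw property for $\mu_D$: given a short exact sequence $0 \to \FF' \to \FF \to \FF'' \to 0$ of $(\OO_X,G)$-modules all with non-zero negative part, the inequalities $\mu_D(\FF') \leq \mu_D(\FF)$, $\mu_D(\FF') \leq \mu_D(\FF'')$, and $\mu_D(\FF) \leq \mu_D(\FF'')$ are equivalent, with equalities propagating. The proof is identical to that of Lemma \ref{ineq1}, since it only uses additivity of $\Theta_D$ and $r$ together with the non-vanishing of the three negative parts.

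Next I would prove the analog of Proposition \ref{unique}: every $(\OO_X,G)$-module $\FF$ generated in $D_-$ admits a unique $\mu_D$-maximal destabilizing submodule $\FF'$ (generated in $D_-$, of maximal $\mu_D$-slope, and containing every submodule of the same slope). One again sets $M := \max_i \mu_D(h_i)$ over the finite list from Proposition \ref{finiteness}, picks a Hilbert function $h'$ realizing $M$ that is maximal for the order \eqref{order}, and takes $\FF' \subseteq \FF$ with Hilbert function $h'$. The case analysis (whether the negative part of $\FF' \cap \GG$ vanishes) goes through word for word, provided one checks the two sign inputs: in the $\mut$ argument these were the positivity $\theta_\rho > 0$ on $D_+$, whereas here their role is played by the positivity $\kappa_\rho > 0$ for all $\rho \in D \setminus D_-$, established right after \eqref{choicechikappa}. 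Indeed, since $\chi$ depends only on the values of the Hilbert function on $D_-$, passing to a saturation, or from $\FF' \oplus \GG$ to $\FF' + \GG$ in the case where the negative part of $\FF' \cap \GG$ is zero, leaves $\chi$ and $r$ unchanged (the Hilbert functions agreeing on $D_-$) while weakly decreasing $\kappa_D$ on $D \setminus D_-$; hence $\mu_D$ weakly increases, which is precisely the monotonicity needed to run the argument of Proposition \ref{unique}.

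With the maximal destabilizing submodule in hand, the remaining two steps are purely formal and copy \S \ref{HN1}. The analog of Proposition \ref{unik} --- that the first term of any filtration satisfying (i)--(iii) must be $\FF'$ --- is proved by induction on the length, using only the seesaw property and the defining properties of $\FF'$; and the existence of the filtration follows by induction on $r(\FF)$, taking $\GG^D_1 := \FF'$ and lifting the $\mu_D$-HN filtration of $\FF/\FF'$, which has strictly smaller negative part, exactly as in the proof of Theorem \ref{HNexists}. I expect no genuine obstacle beyond the bookkeeping of the two sign conditions above; the only point demanding care is to verify that every inequality in the case analysis of Proposition \ref{unique} still points the right way once $\theta_\rho$ on $D_+$ is replaced by $\kappa_\rho$ on $D \setminus D_-$ and the constant $\kappa(h)/r(h)$ is carried along.
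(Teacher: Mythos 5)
Your proposal is correct and follows essentially the same route as the paper, which likewise reduces the statement to the additivity of $\kappa_D$, $\chi$, and $r$, the resulting seesaw property, and the $\mu_D$-analogues of Propositions \ref{unique} and \ref{unik}, concluding by induction on $r(\FF)$ for existence and on the length of the filtration for uniqueness. In fact you are more explicit than the paper on the one point it leaves implicit, namely that the positivity $\kappa_\rho>0$ for $\rho\in D\setminus D_-$ (together with $\chi$ depending only on the $D_-$-part) replaces the positivity of $\theta_\rho$ on $D_+$ in the case analysis of Proposition \ref{unique}.
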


\begin{proof}
The proof is analogous to the one of Theorem \ref{HNexists}.
First we note that the functions $\kappa_{D}$, $\chi$ and $r$ are additive on exact sequences (because so are the Hilbert functions). 
Hence, for any $(\OO_X,G)$-submodule $0 \neq \FF' \subsetneq \FF$ with a non-zero negative part, we can write
$$\mu_D(\FF')=\frac{-\kappa_D(\FF')-\chi(\FF')}{r(\FF')}+\frac{\kappa(h)}{r(h)}=\frac{-\kappa_D(\FF')-\chi(\FF')+r(\FF')\frac{\kappa(h)}{r(h)}}{r(\FF')}\;$$
as the quotient of two additive functions. Then it is clear that $\mu_D$ enjoys the \emph{seesaw property} (see Lemma \ref{ineq1}).
Using the seesaw property and Proposition \ref{finiteness}, we show the existence of a unique maximal $\mu_{D}$-destabilizing subobject $\FF'\subseteq \FF$ (see Proposition \ref{unique}). From this, we easily obtain the existence of the $\mu_D$-HN filtration by induction on $r(\FF)$. 

For the uniqueness part of the statement, Proposition \ref{unik} gives (after replacing the $\mut$-stability by the $\mu_D$-stability) that 
necessarily $\GG^{D}_{1}=\FF'$, and we conclude by induction on the length of the filtration.  
\end{proof}

Let us note that the $\mu_D$-HN filtration of a $(G,h)$-constellation $\FF$ is trivial if and only if $\FF$ is $\mu_D$-semistable 
(equivalently, $\FF$ is GIT-semistable). Explicit examples of $(G,h)$-constellations with non trivial $\mu_D$-HN filtration will be computed in \S \ref{examples}.

\section{Comparison between the different stability notions} \label{comparisonstab}

In this section we compare the $\theta$ or $\mut$-stability, introduced in \S \ref{first}, with the GIT or $\mu_D$-stability, introduced in \S \ref{HN2}. 
In \S \ref{relations}, we summarize the implications between these different stability notions for arbitrary $(\OO_X,G)$-modules generated in $D_-$, and answer several questions remaining open in \cite{BT15}. Then, in \S\S \ref{relations2}--\ref{relations3}, we compare the $\mut$-HN and $\mu_D$-HN filtrations. More precisely, in \S \ref{relations2} we prove that when $D \subset \Irr G$ is a finite subset big enough, the  $\mut$-HN filtration is always a subfiltration of the $\mu_D$-HN filtration (see Theorem \ref{HNterms}). Next, in \S \ref{relations3}, we see how to attach to any $(G,h)$-constellation two convex polygons, 
the $\theta$-polygon and the $D$-polygon, and we prove that the sequence of $D$-polygons converge to the $\theta$-polygon when $D$ grows (Theorem \ref{convergence}).

Throughout this section, $\theta$ is a stability function as in Definition \ref{deftheta}, 
and $D \subset\Irr G$ is a finite subset satisfying Hypothesis \ref{hypoD}.

\subsection{Relations between GIT-stability and \texorpdfstring{$\theta$}{theta}-stability} \label{relations}
We recall that the notions of $\theta$-stability, $\mut$-stability, and $\mu_D$-stability were defined for arbitrary $(\OO_X,G)$-modules generated in $D_-$ in \S\S \ref{first} and \ref{HN2}. 

For all $D$ \emph{big enough} (i.e., $D \subset \Irr G$ is a finite subset which contains a given $\tilde D$ and satisfies Hypothesis \ref{hypoD}), and for all $(\OO_X,G)$-modules generated in $D_-$ and contained in some $(G,h)$-constellation, we have the following implications: 
\begin{equation} \label{diag_relations}
\xymatrix{
   \theta\text{-stable \ } \ar@{=>}[r]^{(a)\ } \ar@{=>}[rd] &\ \mut\text{-stable \ } \ar@{=>}[r] \ar@{=>}[d]^{(b)}  &\ \mut\text{-semistable \ } & \  \theta\text{-semistable} \ar@{=>}[l]_{\ (d)} \\
            &\mu_D\text{-stable \ } \ar@{=>}[r]                 & \ \mu_D\text{-semistable}   \ar@{=>}[u]^{(c)}  & 
  }
\end{equation}
where $(a)$ and $(d)$ follow from Lemma \ref{comparison}, $(b)$ and $(c)$ follow easily from the forthcoming Proposition \ref{wideDD}, and the three other implications are straightforward. Also, for any $(G,h)$-constellation, $(a)$ and $(d)$ are equivalences by Lemma \ref{comparison}, and we have a notion of GIT-stability which coincide with the $\mu_D$-stability by Corollary \ref{muDisGIT}. In particular, for $(G,h)$-constellations, we have
$$
\theta\text{-unstable} \Longleftrightarrow \mut\text{-unstable}  \Longrightarrow  \mu_D\text{-unstable} \Longleftrightarrow  \text{GIT-unstable}.
$$    

An important question, raised in \cite{BT15}, is the following:
\begin{question}   \emph{\cite[Question 5.2]{BT15}}
Are the implications $(b)$ and $(c)$ in Diagram \eqref{diag_relations} equivalences for $(G,h)$-constellations? 
\end{question}
The answer is no in general. Indeed, there are examples of $(G,h)$-constellations 
which are $\mu_D$-stable for all $D$ big enough but never $\mut$-stable, and there are examples of $(G,h)$-constellations 
which are $\mut$-semistable but for which one can find $D$ arbitrarily big such that they are $\mu_D$-unstable. We will compute explicitly such examples in \S \ref{examples}.

Let us mention that, since the answer to \cite[Question 5.2]{BT15} is negative, it is clear that the answer 
to \cite[Question 5.1]{BT15}, which  is about the representability of a certain moduli functor, is also negative. 
However, we do not wish to pursue in this direction in this article.

Once we know that $\mut$-stability and $\mu_D$-stability do not coincide, the next step is to "measure" the difference between these two stability notions. This will be performed in the next subsections.

\subsection{Relations between the filtrations}  \label{relations2}
In this section we prove one of the main results of this paper. We show that the $\mut$-HN filtration is always a subfiltration of the $\mu_D$-HN filtration
when $D \subset \Irr G$ is a finite subset big enough (Theorem \ref{HNterms}).

First, we need two preliminary results. The first one assures that $(\OO_X,G)$-modules 
generated in $D_-$ are both Noetherian and Artinian. The second one assures the convergence $\mu_D(.) \to \mut(.)$, when $D$ tends 
to $\supp h$, for subsheaves and quotients of $(G,h)$-constellations.

\begin{lemma}  \label{noeth}
Let $\FF$ be an $(\OO_X,G)$-module generated in $D_{-}$. Every increasing chain
$$ \FF_{1}\subsetneq \FF_{2}\subsetneq \FF_{3}\subsetneq \cdots \subsetneq \FF $$
and every decreasing chain
$$\cdots \subsetneq \FF_{3}\subsetneq \FF_{2}\subsetneq \FF_{1}\subsetneq \FF$$
of $(\OO_X,G)$-submodules generated in $D_-$ has length at most $r(\FF)$.
\end{lemma}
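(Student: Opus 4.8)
The plan is to use the integer-valued function $r$ from Definition \ref{mut} as a strictly monotone ``size'' invariant on the poset of $(\OO_X,G)$-submodules of $\FF$ generated in $D_-$. The crux is to show that a \emph{strict} inclusion of two such submodules forces a \emph{strict} drop in $r$; once this is in hand, both the ascending and the descending chain conditions follow at once, because $r$ takes values in the finite set $\{1,\dots,r(\FF)\}$.

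First I would record monotonicity. If $\FF_{1}\subseteq \FF_{2}$ are $(\OO_X,G)$-submodules with Hilbert functions $h_{1}$ and $h_{2}$, then $h_{1}\leq h_{2}$ for the partial order \eqref{order} (this is one of the basic facts noted after that display), so in particular $\dim (\FF_{1})_{\rho}\leq \dim (\FF_{2})_{\rho}$ for every $\rho\in D_{-}$, whence $r(\FF_{1})\leq r(\FF_{2})$. Moreover every nonzero $(\OO_X,G)$-module generated in $D_{-}$ has nonzero negative part (it is generated by that part), so $r\geq 1$ on all the members of the chains under consideration.

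The key step is strictness: if $\FF_{1}\subseteq \FF_{2}$ are both generated in $D_{-}$ and $r(\FF_{1})=r(\FF_{2})$, then $\FF_{1}=\FF_{2}$. Indeed, equality of the sums $\sum_{\rho\in D_{-}}\dim(\FF_{i})_{\rho}$ together with the inclusions $(\FF_{1})_{\rho}\subseteq(\FF_{2})_{\rho}$ forces $(\FF_{1})_{\rho}=(\FF_{2})_{\rho}$ for every $\rho\in D_{-}$; that is, the negative parts of $\FF_{1}$ and $\FF_{2}$ coincide as subspaces of $H^{0}(\FF)$. Since by Definition \ref{geninD-} each $\FF_{i}$ equals the $\OO_X$-submodule generated by its negative part, the two submodules coincide. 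This is exactly where the hypothesis ``generated in $D_{-}$'' is indispensable, and I expect it to be the only delicate point: for a general submodule, $r$ ignores everything outside $D_{-}$ and need not be strictly monotone, so the chain conditions can genuinely fail without this assumption (which, being part of the hypothesis, is preserved along the chains).

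Finally I would conclude by a pigeonhole argument. Given a strictly increasing chain $\FF_{1}\subsetneq\FF_{2}\subsetneq\cdots\subsetneq\FF$ of submodules generated in $D_{-}$, the strictness step yields a strictly increasing sequence of integers $r(\FF_{1})<r(\FF_{2})<\cdots$, all lying in $\{1,\dots,r(\FF)\}$ because each $\FF_{i}\subseteq\FF$; hence the chain has length at most $r(\FF)$. The decreasing case is identical, the same sequence now being strictly decreasing and still confined to $\{1,\dots,r(\FF)\}$. No further obstacle is anticipated beyond the strictness argument above.
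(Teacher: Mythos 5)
Your proof is correct and is essentially the paper's own argument: the paper likewise deduces the chain conditions directly from the fact that two nested $(\OO_X,G)$-submodules of $\FF$ generated in $D_-$ coincide if and only if their $r$-values agree. You have simply spelled out the details of that key fact (equality of negative parts forcing equality of the generated submodules), which the paper leaves implicit.
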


\begin{proof}
This result is a direct consequence of the definition of $r(\FF)$ in \S \ref{first}, using the fact that two $(\OO_X,G)$-submodules $\FF' \subseteq \FF'' \subseteq \FF$, both generated in $D_-$, coincide if and only if $r(\FF')=r(\FF'')$. 
\end{proof}

\begin{proposition}  \label{wideDD}
We fix $\epsilon >0$. There exists a finite subset $D_\epsilon \subset \Irr G$ satisfying Hypothesis \ref{hypoD} such that 
for all $(G,h)$-constellations $\FF$ and all $(\OO_X,G)$-submodules $0 \subseteq \FF' \subsetneq \FF'' \subseteq \FF$ generated in $D_-$, we have $$|\mu_\theta(\FF''/\FF')-\mu_D(\FF''/\FF')|<\epsilon$$ 
for all $D \supset D_\epsilon$.
\end{proposition}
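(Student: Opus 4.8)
The plan is to reduce the statement to a single uniform estimate governed by the quantity $S_D = \sum_{\rho \in \Irr G \setminus D}\theta_\rho h(\rho)$, and then to exploit that $S_D$ tends to $0$ as $D$ grows. Since $\kappa_D$, $\chi$ and $r$ are all additive on the Hilbert function (this is precisely what makes the seesaw property available, cf.\ Lemma \ref{ineq1} and the proof of Theorem \ref{HNDexists}), it suffices to treat a single module: writing $\GG := \FF''/\FF'$ with Hilbert function $g = h''-h'$, the module $\GG$ is again generated in $D_-$ and non-zero (as $\FF' \subsetneq \FF''$), hence its negative part is non-zero and $r(\GG) = \sum_{\rho \in D_-} g(\rho) \geq 1$; moreover $g \leq h$ componentwise since $\GG$ is a subquotient of the $(G,h)$-constellation $\FF$.

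First I would carry out the explicit computation of $\mu_\theta(\GG) - \mu_D(\GG)$ using the GIT parameter values fixed in \eqref{choicechikappa}. Summing $\kappa_D(\GG)+\chi(\GG)$ over $D_-$ and over $D \setminus D_-$ separately, one checks that the contribution $\frac{\kappa(h)}{r(h)}r(\GG)$ cancels the constant term of $\mu_D$, while the $\theta_\rho$-terms collapse to $\sum_{\rho \in D}\theta_\rho g(\rho)$. Dividing by $r(\GG)$ one obtains
\[
\mu_\theta(\GG) - \mu_D(\GG) = \frac{1}{r(\GG)}\left( -\sum_{\rho \in \Irr G \setminus D} \theta_\rho\, g(\rho) + \frac{S_D}{d} \sum_{\rho \in D \setminus D_-} \frac{g(\rho)}{h(\rho)} \right),
\]
where $d = \#(D \setminus D_-)$, and where I use that $D \supseteq D_-$ so that $\Irr G \setminus D \subseteq D_0 \cup D_+$ and only the indices $\rho \in D_+ \setminus D$ contribute to the first sum.

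Next I would bound the two sums separately. Using $r(\GG) \geq 1$, the first sum is controlled by $\sum_{\rho \in D_+ \setminus D}\theta_\rho g(\rho) \leq \sum_{\rho \in D_+ \setminus D}\theta_\rho h(\rho) = S_D$, the equality holding because $\theta_\rho = 0$ on $D_0$ and $D_- \subseteq D$. For the second sum, each ratio $g(\rho)/h(\rho)$ lies in $[0,1]$ (as $g \leq h$ and $h(\rho)\neq 0$ on $D$ by Hypothesis \ref{hypoD}), and there are exactly $d$ summands, so $\frac{S_D}{d}\sum_{\rho \in D \setminus D_-}\frac{g(\rho)}{h(\rho)} \leq S_D$. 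Since all the terms involved are non-negative, the triangle inequality yields $|\mu_\theta(\GG) - \mu_D(\GG)| \leq 2 S_D$, a bound uniform in $\FF$ and in the chosen subquotient.

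It remains to make $S_D$ arbitrarily small, and this is where the defining constraint $\sum_{\rho \in \Irr G}\theta_\rho h(\rho)=0$ enters decisively: since $\theta_\rho < 0$ for only finitely many $\rho$ (all lying in $D_-$), the series $\sum_{\rho \in D_+}\theta_\rho h(\rho) = -\sum_{\rho \in D_-}\theta_\rho h(\rho)$ is a convergent series of non-negative terms, so its tails tend to $0$. I can therefore choose a finite subset $D_\epsilon \subseteq \supp h$ containing $D_-$ with $\sum_{\rho \in D_+ \setminus D_\epsilon}\theta_\rho h(\rho) < \epsilon/2$, enlarging it if necessary so that it satisfies Hypothesis \ref{hypoD} (which only removes further non-negative terms from the tail, hence does not increase it). As $S_D$ is non-increasing in $D$ among subsets containing $D_-$, we get $S_D \leq S_{D_\epsilon} < \epsilon/2$ for every $D \supseteq D_\epsilon$, and hence $|\mu_\theta(\GG) - \mu_D(\GG)| \leq 2S_D < \epsilon$. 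The only genuine subtlety is bookkeeping---keeping the estimate uniform over all constellations and all subquotients (ensured by $r(\GG)\geq 1$ and $g \leq h$) and checking that the enlarged $D_\epsilon$ still satisfies Hypothesis \ref{hypoD}; the analytic content is simply the summability already built into the definition of $\theta$.
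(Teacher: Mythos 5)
Your proposal is correct and follows essentially the same route as the paper's own proof: the same explicit formula for $\mu_\theta-\mu_D$ evaluated on a Hilbert function $g\leq h$ with support meeting $D_-$, the same bound $|\mu_\theta(\GG)-\mu_D(\GG)|\leq 2S_D$ using $r(\GG)\geq 1$ and $g(\rho)/h(\rho)\leq 1$, and the same conclusion by choosing $D_\epsilon$ with $S_{D_\epsilon}<\epsilon/2$. The only difference is that you spell out details the paper treats as immediate (why $g\leq h$, the monotonicity of $S_D$, and the enlargement to satisfy Hypothesis \ref{hypoD}), which is harmless.
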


\begin{proof}
We fix $D \subset \Irr G$ a finite subset satisfying Hypothesis \ref{hypoD}.
Let $h'$ be an arbitrary Hilbert function whose support intersects $D_-$ and such that $h' \leq h$, where $\leq$ denotes the partial order 
defined by \eqref{order}. Note that all Hilbert functions corresponding to quotients $\FF''/\FF'$ of subsheaves $0 \subseteq \FF' \subsetneq \FF'' \subseteq \FF$ are of this kind.  

From Definition \ref{muDstability} and the choice of GIT parameters \S \ref{gamma2}, a direct calculation gives
$$ 
\mu_{D}(h')=\frac{-\sum_{\rho\in D}\theta_{\rho}h'(\rho)-\frac{S_{D}}{d}\sum_{\rho\in D\backslash D_{-}}\frac{h'(\rho)}{h(\rho)}}{r(h')}\;.
$$
We deduced from this that the difference between $\mut$ and $\mu_D$ is  given by
$$\mu_\theta(h')-\mu_D(h')=\frac{-\sum_{\rho\notin D}\theta_{\rho}h'(\rho)+\frac{S_D}{d} \sum_{\rho \in D \backslash D_-}{\frac{h'(\rho)}{h(\rho)}}}{r(h')} \; .$$
Since $h' \leq h$, we have $\sum_{\rho \in D \backslash D_-}{\frac{h'(\rho)}{h(\rho)}} \leq d$, and 
$$0 \leq \sum_{\rho\notin D}\theta_{\rho}h'(\rho)\leq \sum_{\rho\notin D}\theta_{\rho}h(\rho)=S_{D}.$$
Now, it is clear from the definition of $S_D$ that for every $\epsilon>0$, 
we can find $D_\epsilon$ satisfying Hypothesis \ref{hypoD} such that if $D \supset D_\epsilon$, then $S_D<\frac{\epsilon}{2}$.
Thus, 
$$|\mu_\theta(h')-\mu_D(h')| \leq \frac{2S_{D}}{r(h')}<\epsilon\; .$$
\end{proof}

Let $\FF$ be a $(G,h)$-constellation, and let $0 \subseteq \FF' \subsetneq \FF'' \subseteq \FF$ be $(\OO_X,G)$-submodules
generated in $D_-$. We denote by $\mathcal{A}=\{\alpha_1,\alpha_2,\ldots,\alpha_s\}$ the set of pairwise different 
possible values for $\mut(\FF''/\FF')$. The fact that $\mathcal{A}$ is finite follows from Proposition \ref{finiteness}.
Then, we define
\begin{equation} \label{epsilon}
\epsilon_0:=\frac{1}{4} \min_{i \neq j} |\alpha_i-\alpha_j|.
\end{equation}

The next Theorem says that, given a finite subset $D\subset \Irr G$ big enough, all terms of the $\mut$-HN filtration already 
appear in the $\mu_D$-HN filtration, although the latter can contain more terms in general.

\begin{theorem}
\label{HNterms}
Let $D\subset \Irr G$ be a finite subset satisfying Hypothesis \ref{hypoD} and containing $D_{\epsilon_0}$,
where $\epsilon_0$ is defined by \eqref{epsilon} and $D_{\epsilon_0}$ is given by Proposition \ref{wideDD}.
Let $\FF$ be a $(G,h)$-constellation.
We denote the $\mu_\theta$-HN filtration of $\FF$ by
$$ 0 \subsetneq \FF_1 \subsetneq\FF_2 \subsetneq \FF_3 \subsetneq \cdots \subsetneq\FF_t\subsetneq \FF_{t+1}=\FF,$$
and the $\mu_D$-HN filtration of $\FF$ (where we drop the index $D$ to simplify the notation) by
$$ 0 \subsetneq \GG_1 \subsetneq \GG_2 \subsetneq \GG_3 \subsetneq \cdots \subsetneq \GG_p\subsetneq \GG_{p+1}=\FF.$$
Then, $p \geq t$ and there exists a subset of indices $1 \leq i_1<i_2<\ldots<i_t < i_{t+1}=p+1$ such that $\FF_j=\GG_{i_j}$ for all $1 \leq j \leq t+1$.
Moreover, for all $j=1,\ldots,t+1$ and all $k=i_{j-1}+1\ldots,i_j$, we have $\mut(\FF_j/\FF_{j-1})=\mut(\GG_{k}/\GG_{i_{j-1}})$.
\end{theorem}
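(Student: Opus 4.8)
The plan is to show that the $\mu_D$-HN filtration of $\FF$ is obtained from the $\mut$-HN filtration $\FF_\bullet$ by refining each $\mut$-semistable factor $\FF^j=\FF_j/\FF_{j-1}$ into its own $\mu_D$-HN filtration, and that the factor slopes never ``cross'' between consecutive blocks. The two inputs are Proposition \ref{wideDD}, which for $D\supseteq D_{\epsilon_0}$ gives $|\mut(\FF''/\FF')-\mu_D(\FF''/\FF')|<\epsilon_0$ on every subquotient, and the separation built into \eqref{epsilon}: every value $\mut(\GG_k/\GG_{i_{j-1}})$ lies in the finite set $\mathcal{A}=\{\alpha_1,\dots,\alpha_s\}$ (finite by Proposition \ref{finiteness}), whose pairwise gaps are at least $4\epsilon_0$, so any $\mut$-value lying within $2\epsilon_0$ of some $\alpha_i$ must equal $\alpha_i$. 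I will call this last mechanism \emph{snapping}.

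First I would record the standard consequences of the seesaw property (Lemma \ref{ineq1}): a submodule of a $\mut$-semistable module has $\mut$-slope at most that of the module, a quotient has $\mut$-slope at least that of the module, and the $\mu_D$-slope of any upper segment of a $\mu_D$-HN filtration is a weighted average of the $\mu_D$-slopes of the factors it contains (hence lies between them); I write $\mu_{D,\max}(\cdot)$ and $\mu_{D,\min}(\cdot)$ for the largest and smallest $\mu_D$-slopes among the $\mu_D$-HN factors of a module. Applying these to a block $\FF^j$: its bottom $\mu_D$-factor is a quotient of $\FF^j$, so has $\mut$-slope $\geq\mut(\FF^j)$ and therefore $\mu_{D,\min}(\FF^j)>\mut(\FF^j)-\epsilon_0$; its top $\mu_D$-factor is a submodule of $\FF^j$, so has $\mut$-slope $\leq\mut(\FF^j)$ and therefore $\mu_{D,\max}(\FF^j)<\mut(\FF^j)+\epsilon_0$.

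Next I would build the candidate filtration: inside each block take the $\mu_D$-HN filtration of $\FF^j$, pull it back to a chain $\FF_{j-1}=\mathcal{K}_0\subsetneq\cdots\subsetneq\mathcal{K}_{m_j}=\FF_j$ of $(\OO_X,G)$-submodules generated in $D_-$, and concatenate these chains over $j=1,\dots,t+1$. All factors are $\mu_D$-semistable by construction, and the $\mu_D$-slopes strictly decrease inside each block; across the junction from block $j$ to block $j+1$ the estimates above, together with $\mut(\FF^j)-\mut(\FF^{j+1})\geq 4\epsilon_0$, give
$$\mu_{D,\min}(\FF^j)-\mu_{D,\max}(\FF^{j+1})>\bigl(\mut(\FF^j)-\epsilon_0\bigr)-\bigl(\mut(\FF^{j+1})+\epsilon_0\bigr)\geq 2\epsilon_0>0.$$
Thus the concatenated filtration has strictly decreasing factor slopes, so by the uniqueness in Theorem \ref{HNDexists} it \emph{is} the $\mu_D$-HN filtration of $\FF$. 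In particular every block boundary $\FF_j$ occurs among the $\GG_k$, giving indices $i_1<\cdots<i_{t+1}=p+1$ with $\FF_j=\GG_{i_j}$ and hence $p\geq t$, and the $\GG_k/\GG_{i_{j-1}}$ with $i_{j-1}<k\leq i_j$ are precisely the terms of the $\mu_D$-HN filtration of $\FF^j$.

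Finally, for the ``moreover'' I would prove $\mut(\GG_k/\GG_{i_{j-1}})=\mut(\FF^j)$ by snapping. The upper bound $\mut(\GG_k/\GG_{i_{j-1}})\leq\mut(\FF^j)$ holds since $\GG_k/\GG_{i_{j-1}}\subseteq\FF^j$ and $\FF^j$ is $\mut$-semistable. For the lower bound, $\GG_k/\GG_{i_{j-1}}$ is an upper segment of the $\mu_D$-HN filtration of $\FF^j$, so $\mu_D(\GG_k/\GG_{i_{j-1}})\geq\mu_{D,\min}(\FF^j)>\mut(\FF^j)-\epsilon_0$, whence $\mut(\GG_k/\GG_{i_{j-1}})>\mu_D(\GG_k/\GG_{i_{j-1}})-\epsilon_0>\mut(\FF^j)-2\epsilon_0$. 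As $\mut(\GG_k/\GG_{i_{j-1}})\in\mathcal{A}$ lies in $(\mut(\FF^j)-2\epsilon_0,\mut(\FF^j)]$ and $\mut(\FF^j)\in\mathcal{A}$, snapping forces equality. The main obstacle, in my view, is not the slope bookkeeping but the non-abelian nature of the category of modules generated in $D_-$: I must check that pulling back a $\mu_D$-HN filtration of $\FF^j$ yields submodules of $\FF_j$ that are again generated in $D_-$, that the seesaw property is available for each exact sequence invoked (every relevant kernel and quotient having non-zero negative part, as required in Lemma \ref{ineq1}), and that the weighted-average description of the slopes of HN segments survives here. These are exactly the points where the classical arguments must be adapted, precisely as was already done in \S \ref{HN1}.
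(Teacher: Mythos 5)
Your proof is correct, but it runs in the opposite direction from the paper's. The paper argues bottom-up, by induction on $t$: it first shows $\mut(\GG_1)=\mut(\FF_1)$ (otherwise Proposition \ref{wideDD} and the choice of $\epsilon_0$ would contradict the maximality of $\GG_1$), deduces $\GG_1\subseteq\FF_1$ from Proposition \ref{unik}, then shows that as long as $\GG_k\subsetneq\FF_1$ the quotient $\FF_1/\GG_k$ is $\mu_D$-destabilizing in $\FF/\GG_k$, so the chain $\GG_1\subsetneq\GG_2\subsetneq\cdots$ keeps climbing inside $\FF_1$ and must reach $\FF_1$ by Lemma \ref{noeth}; it then passes to $\FF/\FF_1$ and inducts. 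You instead build a candidate filtration top-down, concatenating the $\mu_D$-HN filtrations of the blocks $\FF^j$, verify strict slope decrease at the junctions via your $2\epsilon_0$ estimate, and identify the candidate with the $\mu_D$-HN filtration by the uniqueness in Theorem \ref{HNDexists}; the ``snapping'' mechanism you use for the last assertion is the same one the paper uses, and both proofs rest on the same quantitative inputs (Proposition \ref{wideDD} plus the $4\epsilon_0$-separation of $\mathcal{A}$). Your route bypasses the direct use of Proposition \ref{unik} and Lemma \ref{noeth} and yields a sharper structural conclusion --- the $\mu_D$-HN filtration of $\FF$ restricted to each $\mut$-HN block \emph{is} the $\mu_D$-HN filtration of that block, from which Remark \ref{HNtoJH} is immediate --- at the cost of invoking the existence half of Theorem \ref{HNDexists} for the blocks $\FF^j$, which are not $(G,h)$-constellations for the fixed $h$; this is legitimate since, as the paper notes for Theorem \ref{HNexists}, existence and uniqueness are actually proved for arbitrary $(\OO_X,G)$-modules generated in $D_-$. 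The verifications you flag at the end (pullbacks remain generated in $D_-$, nonzero negative parts for the seesaw property, the weighted-average description of segment slopes) are genuinely needed, but they are exactly the adaptations already carried out implicitly in the existence proof of Theorem \ref{HNexists}, so resolving them as in \S \ref{HN1} is appropriate.
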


\begin{proof}
The proof goes by induction on $t$.
If $t=0$, then the first part of the result is obvious, the second part follows from Proposition \ref{wideDD}.
We now suppose that $t \geq 1$.
We want to prove that there exists $i_1 \geq 1$ such that $\GG_{i_1}=\FF_1$.
If $\GG_1=\FF_1$, then we are done.
Otherwise let us prove that $\GG_1 \subsetneq \FF_1$ and that $\FF_1/\GG_1$ is a $\mu_D$-destabilizing subobject of $\FF/\GG_1$.
If $\mut(\FF_1)>\mut(\GG_1)$, then $\mu_D(\FF_1)>\mu_D(\GG_1)$ by definition of $\epsilon_{0}$ and by Proposition \ref{wideDD}, which contradicts the definition of $\GG_1$. 
Hence $\mut(\FF_1)=\mut(\GG_1)$, and thus $\GG_1 \subsetneq \FF_1$ by Proposition \ref{unik}. 
The seesaw property (Lemma \ref{ineq1}) gives $\mut(\FF_1)=\mut(\FF_1/\GG_1)$ and $\mut(\FF/\GG_1)<\mut(\FF)$. 
We deduce that 
$$\mut(\FF/\GG_1)<\mut(\FF)<\mut(\FF_1/\GG_1)$$ 
and $\FF_1/\GG_1$ is a $\mut$-destabilizing subobject of $\FF/\GG_1$.
Again, by using the definition of $\epsilon_{0}$ and Proposition \ref{wideDD}, we see that $\FF_1/\GG_1$ is a $\mu_{D}$-destabilizing subobject of $\FF/\GG_1$.

Therefore $p\geq 2$ and $\mu_D(\GG_2/\GG_1) \geq \mu_D(\FF_1/\GG_1)$ by definition of $\GG_2$.
If $\GG_2=\FF_1$, then we are done.
Otherwise, let us prove that $\GG_2 \subsetneq \FF_1$ and that $\FF_1/\GG_2$ is a $\mu_D$-destabilizing subobject of $\FF/\GG_2$.
We consider the exact sequence 
$$0 \to \GG_1 \to \GG_2 \to \GG_2/\GG_1 \to 0.$$
Since $\mu_\theta(\GG_1)=\mu_\theta(\FF_1)$ is maximal, the seesaw property gives 
$$\mu_\theta(\GG_1) \geq  \mu_\theta(\GG_2) \geq \mu_\theta(\GG_2/\GG_1).$$
If those inequalities were strict, using $\mut(\GG_1)=\mut(\FF_1/\GG_1)$, the definition of $\epsilon_{0}$ and 
Proposition \ref{wideDD}, we would have
$$\mu_{D}(\FF_1/\GG_1)>\mu_D(\GG_2)>\mu_D(\GG_2/\GG_1) \geq \mu_D(\FF_1/\GG_1),$$
where the last inequality is by definition of $\GG_2$, which is a contradiction. 
Hence, we necessarily have 
$$\mut(\FF_1)=\mut(\GG_1) =  \mut(\GG_2) = \mut(\GG_2/\GG_1).$$ 
In particular, $\GG_2 \subsetneq \FF_1$ by Proposition \ref{unik}.
Also, the seesaw property implies that
$$\mut(\FF/\GG_2)<\mut(\FF)<\mut(\FF_1)=\mut(\FF_1/\GG_2),$$ 
and thus $\FF_1/\GG_2$ is a $\mut$-, therefore $\mu_D$- by Proposition \ref{wideDD},
destabilizing subobject of $\FF/\GG_2$.

Therefore $p\geq 3$ and $\mu_D(\GG_3/\GG_2) \geq \mu_D(\FF_1/\GG_2)$ by definition of $\GG_3$.
If $\GG_3=\FF_1$, then we are done. Otherwise we follow the same argument to construct subobjects $\GG_4$, $\GG_5$, etc, such that
$$0 \subsetneq \GG_1 \subsetneq \GG_2 \subsetneq \GG_3 \subsetneq \GG_4 \subsetneq \GG_5 \subsetneq\cdots \subsetneq \FF_1\; .$$
By Lemma \ref{noeth}, every increasing sequence has to stabilize, and thus there exists $1 \leq i_1 \leq p$ such that $\GG_{i_1}=\FF_1$. 
Moreover, it is clear that for all $1 \leq k \leq i_1$, we have $\mut(\GG_k)=\mut(\FF_1)$.

Now the $\mut$-HN filtration of $\FF/\FF_1$ is given by
$$ 0 \subsetneq \FF_2/\FF_1 \subsetneq \FF_3/\FF_1 \subsetneq \cdots \subsetneq \FF_{t}/\FF_1\subsetneq \FF_{t+1}/\FF_1=\FF/\FF_1,$$
and the $\mu_D$-HN filtration of $\FF/\FF_1$ is given by
$$ 0 \subsetneq \GG_{i_{1}+1}/\FF_1 \subsetneq \GG_{i_{1}+2}/\FF_1 \subsetneq \cdots \subsetneq \GG_{p}/\FF_{1} \subsetneq \GG_{p+1}/\FF_1=\FF/\FF_1.$$
Since the length of the $\mut$-HN filtration of $\FF/\FF_1$ is one less than the length of the $\mut$-HN filtration of $\FF$, we conclude by induction.
\end{proof}

We will see in \S \ref{examples} that  $\mut$-HN and $\mu_D$-HN filtrations need not coincide. Actually, the $\mu_D$-HN 
filtration need not stabilize when $D$ tends to $\supp h$, even the number of terms of the $\mu_D$-HN filtration can oscillate when $D$ grows.

\begin{remark} \label{HNtoJH}
With the notation of Theorem \ref{HNterms}, we saw that $\mut(\FF_{j}/\FF_{j-1})=\mut(\GG_{k}/\GG_{i_{j-1}})$ for all $1 \leq j \leq t+1$ and all $i_{j-1}+1 \leq k \leq i_j$. 
It follows that 
$$\GG_{i_{j-1}+1}/ \GG_{i_{j-1}}  \subsetneq  \GG_{i_{j-1}+2}/ \GG_{i_{j-1}} \subsetneq \cdots \subsetneq \GG_{i_j}/ \GG_{i_{j-1}}=\FF_j/\FF_{j-1}$$
is a subfiltration of a (generally non-unique) $\mut$-Jordan-H\"older filtration (see Remark \ref{JH}) of the $\mut$-semistable factor $\FF_j/\FF_{j-1}$.
\end{remark}

\subsection{Relations between the polygons} \label{relations3}
In this subsection, we explain how to associate a $\theta$-polygon resp. a $D$-polygon, to any $(G,h)$-constellation $\FF$. 
Those polygons usually appear in literature (see for instance \cite{Sha:1977}) as a convenient way to encode numerical information regarding the $\mut$-HN and the $\mu_D$-HN filtrations of $\FF$. 
For instance, they encode the length of the filtration as well as the slope of each subsheaf. 
The "only" piece of information that we lose when considering those polygons instead of the actual filtrations is the explicit generators of each subsheaf.

Then we will prove that, even though the $\mut$-HN and $\mu_D$-HN filtrations do not coincide in general, we have uniform convergence 
of the sequence of $D$-polygons to the $\theta$-polygon when $D$ grows (Theorem \ref{convergence}). Explicit examples of such polygons will be 
computed in \S \ref{examples}.

\begin{definition} \label{defpolygons}
Let $\FF$ be a $(G,h)$-constellation, and let $\FF_\bullet$ and $\GG_{\bullet}$ be the $\mut$-HN and $\mu_D$-HN filtrations of $\FF$ respectively. 
We call \emph{$\theta$-polygon} of $\FF$ to the convex hull of the points with coordinates
$$(r(\FF_{i}),w^{\theta}_{i}),\ \text{ where } \ w^{\theta}_{i}=r(\FF_{i})\cdot \mut(\FF_{i})\; .$$
Similarly, we call \emph{$D$-polygon} to the convex hull of the points with coordinates 
$$(r(\GG_{i}),w^{D}_{i}),\ \text{ where } \ w^{D}_{i}=r(\GG_{i})\cdot \mu_{D}(\GG_{i})\; .$$
\end{definition}

\begin{figure}[h]
\begin{center}
$$\begin{tikzpicture}
\draw [->] (-5,0) -- (5,0);
\draw [->] (-5,0) -- (-5,4);
\node [left] at  (-5,3.5) {$w^{D}_{i}, w^{\theta}_{i}$};
\node [below] at (4.5,0) {$r(.)$};
\draw [-,ultra thick] (-5,0) -- (-3,2);
\draw [-,ultra thick] (-3,2) -- (0,3);
\draw [-, ultra thick] (0,3) -- (3,2);
\draw [dashed, ultra thick] (3,2) -- (4,1);
\draw [-, thick] (-5,0) -- (-4,1.5);
\draw [-, thick] (-4,1.5) -- (-3,2.5);
\draw [-, thick] (-3,2.5) -- (-1.5,3.2);
\draw [-, thick] (-1.5,3.2) -- (0,3.5);
\draw [-, thick] (0,3.5) -- (1,3.4);
\draw [-, thick] (1,3.4) -- (2,3.1);
\draw [-, thick] (2,3.1) -- (3,2.4);
\draw [dashed, thick] (3,2.4) -- (3.5, 1.8);
\draw [dotted] (-3,2) -- (-3,2.5);
\draw [dotted] (0,3) -- (0,3.5);
\draw [dotted] (3,2) -- (3,2.4);
\draw [<->, thick] (2,2.3) -- (2,3.1);
\node[below] at (0,2.9) {$\FF_{j-1}$};
\node [below] at (3,1.9) {$\FF_{j}$};
\node [above] at (0,3.5) {$\GG_{i_{j-1}}$};
\node [above] at (2.1,3.1) {$\GG_{i_{j-1}+k}$};
\node [above] at (3.1,2.4) {$\GG_{i_{j}}$};

\node [below right] at (-3,2) {$\theta$\text{-polygon}};
\node [above left] at (-1.5,3.1) {$D$\text{-polygon}};
\end{tikzpicture}$$
\caption{$\theta$-polygon and $D$-polygon of a $(G,h)$-constellation}
\label{convpolygons}
\end{center}
\end{figure}
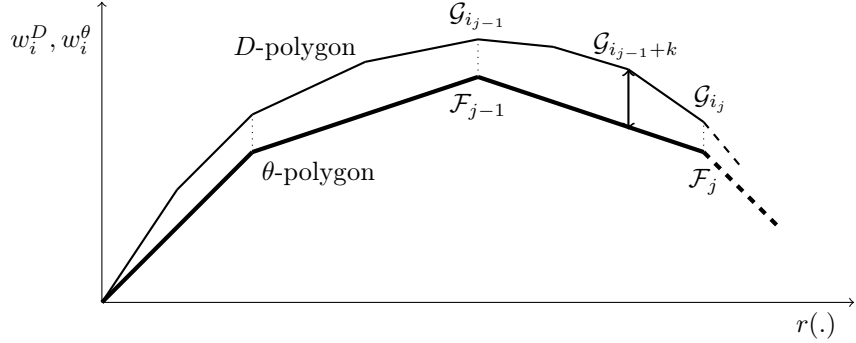

Figure \ref{convpolygons} shows the $\theta$-polygon and the $D$-polygon of filtrations $\mathcal{F}_{\bullet}$ and $\mathcal{G}_{\bullet}$ respectively ; it also makes more visual the conclusions of Theorem \ref{HNterms}.
By construction, the $\theta$-polygon and the $D$-polygons identify with the graph of concave piecewise linear functions 
$$f_{\theta},f_{D}: [0\,,\,r(h)] \to \RR_{\geq 0},\;\; \text{where  }f_{\theta}(r(\FF_i))=w_{i}^{\theta}\; \text{  and  } \;f_{D}(r(\GG_i))=w_{i}^{D}\; ,$$ 
satisfying $f(0)=f(r(h))=0$. We can thus talk about convergence of a sequence of polygons.

\begin{definition}
Let $f_\theta$ resp. $f_D$, be the function whose graph is the $\theta$-polygon resp. the $D$-polygon. 
Then we say that the sequence of $D$-polygons \emph{converges uniformly} to the $\theta$-polygon if
$$\forall \epsilon>0,\; \exists \widetilde D \subset \Irr G,\; \forall D \supset \widetilde D \text{\ (satisfying Hypothesis \ref{hypoD})},\;\; |\!| f_D-f_\theta|\!|_{\infty}<\epsilon. $$
\end{definition}

\begin{theorem} \label{convergence}
Let $\FF$ be a $(G,h)$-constellation. The sequence of $D$-polygons of $\FF$ converge uniformly to the $\theta$-polygon of $\FF$ when the finite subset $D \subset \Irr G$ (which satisfies Hypothesis \ref{hypoD}, and thus is contained in $\supp h$) converges to $\supp h$.
\end{theorem}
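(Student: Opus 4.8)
The plan is to reduce the uniform convergence of polygons to a vertex-by-vertex estimate, exploiting the fact that, by Theorem \ref{HNterms}, the vertices of the $\theta$-polygon form a subset of the vertices of the $D$-polygon once $D$ is big enough. First I would fix the notation of Theorem \ref{HNterms}: write the $\mut$-HN filtration as $0\subsetneq \FF_1\subsetneq\cdots\subsetneq\FF_{t+1}=\FF$ and the $\mu_D$-HN filtration as $0\subsetneq\GG_1\subsetneq\cdots\subsetneq\GG_{p+1}=\FF$, with indices $i_1<\cdots<i_{t+1}=p+1$ such that $\FF_j=\GG_{i_j}$. Recalling that $w_i^{\theta}=r(\FF_i)\mut(\FF_i)=-\theta(\FF_i)$, the $\theta$-polygon is the graph of the concave piecewise linear function $f_\theta$ whose segment over $[r(\FF_{j-1}),r(\FF_j)]$ has slope $\mut(\FF_j/\FF_{j-1})$; the $D$-polygon $f_D$ is described similarly, with vertices at the abscissae $r(\GG_k)$.

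The crucial observation I would establish is that each intermediate vertex of the $D$-polygon already sits exactly on the $\theta$-polygon: for $i_{j-1}\leq k\leq i_j$ one has $r(\GG_k)\mut(\GG_k)=f_\theta(r(\GG_k))$. This follows from the slope equality $\mut(\GG_k/\GG_{i_{j-1}})=\mut(\FF_j/\FF_{j-1})$ supplied by Theorem \ref{HNterms}. Indeed, applying $r(\cdot)\mut(\cdot)=-\theta(\cdot)$ together with the additivity of $\theta$ and $r$ on the short exact sequence $0\to\FF_{j-1}\to\GG_k\to\GG_k/\FF_{j-1}\to 0$ (using $\FF_{j-1}=\GG_{i_{j-1}}$) yields precisely
$$r(\GG_k)\mut(\GG_k)=w_{j-1}^{\theta}+\mut(\FF_j/\FF_{j-1})\bigl(r(\GG_k)-r(\FF_{j-1})\bigr),$$
which is the value at abscissa $r(\GG_k)$ of the affine segment of $f_\theta$ over $[r(\FF_{j-1}),r(\FF_j)]$.

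With this in hand, at every vertex $r(\GG_k)$ of the $D$-polygon I would estimate
$$|f_D(r(\GG_k))-f_\theta(r(\GG_k))|=r(\GG_k)\,|\mu_D(\GG_k)-\mut(\GG_k)|\leq r(h)\,|\mu_D(\GG_k)-\mut(\GG_k)|,$$
and control the right-hand side by Proposition \ref{wideDD} applied to $\FF'=0\subsetneq\FF''=\GG_k$: given $\epsilon>0$, for all $D$ containing both $D_{\epsilon/r(h)}$ and $D_{\epsilon_0}$ (the latter so that Theorem \ref{HNterms} applies), the bound is $<\epsilon$. Finally, since the vertices of $f_\theta$ are among those of $f_D$ and the function $r(\cdot)$ is strictly increasing along the $\GG$-filtration, no vertex of $f_\theta$ lies strictly inside a linear piece $[r(\GG_{k-1}),r(\GG_k)]$ of $f_D$; hence both functions are affine there, $f_D-f_\theta$ is affine on each piece, and its supremum over $[0,r(h)]$ is attained at some breakpoint $r(\GG_k)$. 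Therefore $|\!|f_D-f_\theta|\!|_{\infty}=\max_k|f_D(r(\GG_k))-f_\theta(r(\GG_k))|<\epsilon$, which is exactly the asserted uniform convergence.

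I expect the only genuinely delicate point to be the bookkeeping of the second paragraph, namely verifying that the intermediate $\mu_D$-vertices land \emph{exactly} on the straight segment of the $\theta$-polygon rather than merely near it; this is what turns an \emph{a priori} approximate comparison into an honest sup-norm bound. Everything else is a soft combination of Theorem \ref{HNterms} (alignment of vertices and the slope equalities) with Proposition \ref{wideDD} (numerical closeness of $\mu_D$ to $\mut$), together with the elementary fact that the difference of two piecewise linear functions sharing a common refinement attains its supremum at a vertex.
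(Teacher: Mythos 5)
Your proposal is correct and follows essentially the same route as the paper: reduce the sup-norm to the breakpoints of the $D$-polygon (using that the $\theta$-polygon's vertices are among them), then combine the slope equalities of Theorem \ref{HNterms} with Proposition \ref{wideDD} applied with $\epsilon'=\epsilon/r(h)$, taking $\widetilde D = D_{\epsilon_0}\cup D_{\epsilon'}$. The only cosmetic difference is that you first isolate the exact identity $f_\theta(r(\GG_k)) = r(\GG_k)\,\mut(\GG_k)$ and then invoke Proposition \ref{wideDD} once per vertex, whereas the paper splits the same quantity by the triangle inequality into a base term and an increment term and applies the proposition twice; the two estimates are otherwise identical.
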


\begin{proof}
We fix $\epsilon >0$, and we take $D$ containing $D_{\epsilon_0}$ so that Theorem \ref{HNterms} holds.  
Let
$$ 0 \subsetneq \FF_1 \subsetneq \FF_2 \subsetneq \FF_3 \subsetneq \cdots \subsetneq \FF_{t} \subsetneq \FF_{t+1}=\FF$$
be the $\mu_\theta$-HN filtration of $\FF$, and let
$$ 0 \subsetneq \GG_1 \subsetneq \GG_2 \subsetneq \cdots \subsetneq \GG_{i_{j-1}}\subsetneq \cdots \subsetneq \GG_{i_{j-1}+k}\subsetneq \cdots \subsetneq \GG_{i_{j}}\subsetneq \cdots \subsetneq \GG_{p}\subsetneq \GG_{p+1}=\FF$$ 
be the $\mu_D$-HN filtration of $\FF$, where $1 \leq i_1<i_2<\ldots<i_t <i_{t+1}= p+1$ are the indexes such that $\FF_j=\GG_{i_j}$ for all $1 \leq j \leq t+1$.

The functions $f_\theta$ and $f_D$, associated with the $\theta$-polygon and the $D$-polygons respectively, are piecewise linear. 
Also, the set of abcissae of vertices of the $\theta$-polygon is always contained in the set of abcissae of vertices of the $D$-polygon. 
Consequently, to prove that the $D$-polygons converge to the $\theta$-polygon, it suffices to bound
$$ \max_{l=1,\ldots,p} |f_D(r(\GG_l))-f_\theta(r(\GG_l))|.$$

Let $\GG_{i_{j-1}+k}$ be a term of the $\mu_D$-HN filtration of $\FF$. Then
\begin{align*}
&|f_D(r(\GG_{i_{j-1}+k}))-f_\theta(r(\GG_{i_{j-1}+k}))|\\
&=|w_{i_{j-1}+k}^{D}-\left(  w_{j-1}^{\theta}+\left(r(\GG_{i_{j-1}+k})-r(\GG_{i_{j-1}})\right)\mut(\FF_{j}/\FF_{j-1}) \right)| \\
&=|r(\GG_{i_{j-1}})\mu_D(\GG_{i_{j-1}})+\left(r(\GG_{i_{j-1}+k})-r(\GG_{i_{j-1}})\right)\mu_{D}(\GG_{i_{j-1}+k}/\GG_{i_{j-1}}) \\
 &\ -r(\FF_{j-1})\mut(\FF_{j-1})-\left(r(\GG_{i_{j-1}+k})-r(\GG_{i_{j-1}})\right)\mut(\FF_{j}/\FF_{j-1})|\\
&\leq(r(\GG_{i_{j-1}+k})-r(\GG_{i_{j-1}}))|\mu_{D}(\GG_{i_{j-1}+k}/\GG_{i_{j-1}})- \mut(\FF_{j}/\FF_{j-1})|\\
&\ +r(\FF_{j-1})|\mu_{D}(\GG_{i_{j-1}})-\mut(\FF_{j-1})|\;.
\end{align*}
By Theorem \ref{HNterms}, we have $\mut(\GG_{i_{j-1}+k}/\GG_{i_{j-1}})=\mut(\FF_{j}/\FF_{j-1})$. We denote $\epsilon':=\frac{\epsilon}{r(h)}$, 
and we suppose that $D$ contains $D_{\epsilon'}$. Then Proposition \ref{wideDD} implies that
$$|f_D(r(\GG_{i_{j-1}+k}))-f_\theta(r(\GG_{i_{j-1}+k}))| <(r(\GG_{i_{j-1}+k})-r(\GG_{i_{j-1}})) \epsilon' +  r(\FF_{j-1}) \epsilon'<\epsilon.$$
So denoting $\widetilde D:= D_{\epsilon_0} \cup D_{\epsilon'}$, we see that for all $D\supset \widetilde D$, we have 
$$|\!| f_D-f_\theta|\!|_{\infty}=\max_{l=1,\ldots,p} |f_D(r(\GG_l))-f_\theta(r(\GG_l))|<\epsilon.$$
\end{proof}

\section{Examples} \label{examples}
In this last section we present several examples to illustrate the different phenomena that we considered throughout this article. By $D$ we always mean a finite subset of $\Irr G$ satisfying Hypothesis \ref{hypoD}.

Since all these phenomena are already visible in small dimension, we will stick to the following quite simple framework. 
Let $G=\Gm$ be the multiplicative group. We recall that $\ZZ$ identifies with $\Irr G$, the set of isomorphy classes of irreducible representations of $G$, via the map $r \in \ZZ \mapsto V_r \in \Irr G$, where $V_r$ is the $1$-dimensional representation on which $t \in G$ acts by multiplication by $t^r$. 
We consider the action of $G$ on the algebra $\CC[x,y]$ defined by $t.x:=tx$ and $t.y:=t^{-1}y$, for all $t \in G$.
With this action, note that the weight of a monomial $x^{a}y^{b}$ is $a-b$. 
We take $X:=\Spec \CC[x,y]/(xy)$. Then we have
$$\CC[x,y]/(xy) \cong \CC[x]_{>0} \oplus \CC \oplus \CC[y]_{>0} \cong \bigoplus_{r \in \ZZ} V_r$$  
as $G$-modules. Let $h: \ZZ \to \NN$ be the Hilbert function defined by $h(r)=1$, for all $r \in \ZZ$. 
Then it is clear that $\OO_X$ is a $(G,h)$-constellation on $X$, provided that we choose $\theta$ such that $D_-$ contains $\{0\}$ (to ensure that $\OO_X$ is generated in $D_-$). 

We now consider the $(\OO_X,G)$-submodules of $\OO_X$. Those correspond to the $G$-stable ideals of $\OO_X$ and are of three kinds. 
\begin{enumerate}[(i)]
\item $I_p:=(\overline{x}^p)$, with $p \geq 1$, then $h_{I_p}(r)=1$ for $r \geq p$ and $h_{I_p}(r)=0$ otherwise.
\item $J_q:=(\overline{y}^q)$, with $q \geq 1$, then $h_{J_q}(r)=1$ for $r \leq -q$ and $h_{J_q}(r)=0$ otherwise. 
\item $K_{p,q}:=(\overline{x}^p,\overline{y}^q)$ with $p,q \geq 1$, then $h_{K_{p,q}}(r)=1$ for $(r \geq p \text{ or } r \leq -q)$ and $h_{K_{p,q}}(r)=0$ otherwise. 
\end{enumerate}
Here we denote by $\overline{x}$ and $\overline{y}$ the images of $x$ and $y$ in $\CC[x,y]/(xy)$ respectively.  
Geometrically, $X$ is simply the union of the two coordinate axes in the plane $\mathbb{A}_\CC^2$, $I_p$ is the ideal of the vertical thick line $(\overline{x}^p=0)$, $J_q$ is the ideal of the horizontal thick line $(\overline{y}^q=0)$, and $K_{p,q}$ is the ideal of the thick point $(\overline{x}^p=0=\overline{y}^q)$. 

First, we begin with two examples to show that implications $(b)$ and $(c)$ of Diagram \eqref{diag_relations} are not equivalences in general. 
In particular, this answers negatively \cite[Question 5.2]{BT15}. To compute $\mu_D(h')$ in our forthcoming examples, we will use the formula
\begin{equation} \label{form}
\mu_D(h')=\frac{-1}{r(h')} \left( \sum_{\rho \in D} \theta_\rho h'(\rho)+ \frac{S_D}{d} \sum_{\rho \in D \setminus D_-} \frac{h'(\rho)}{h(\rho)} \right),
\end{equation}
where $S_D$ and $d$ are defined in \S \ref{gamma2}. This formula is obtained simply by plugging the numerical values given in \S \ref{gamma2} in Definition \ref{muDstability}.

\begin{example}\label{GITstablebutnotstable}
Example of a $(G,h)$-constellation $\mu_D$-stable, for all finite subsets $D \subset \Irr G$ big enough, but strictly $\mut$-semistable.
Let $\theta$ be the stability function defined as follows:
$$\begin{tikzpicture}[scale=0.9]
\draw [-] (-6,0) -- (6.8,0);
\draw (-6,0.1) node[above]{$r=$} ;
\draw (-6,-0.1) node[below]{$\theta_r=$} ;

\draw (-5,0.1) node[above]{$-k$} ;
\draw (-5,0) node  {$\bullet$} ;
\draw (-5,-0.1) node[below]{$\frac{1}{2^{k-2}}$} ;

\draw (-3.8,0.2) node[above]{$\cdots$} ;
\draw (-3.8,-0.3) node[below]{$\ldots$} ;

\draw (-3,0.1) node[above]{$-4$} ;
\draw (-3,0) node  {$\bullet$} ;
\draw (-3,-0.1) node[below]{$\frac{1}{4}$} ;

\draw (-2,0.1) node[above]{$-3$} ;
\draw (-2,0) node  {$\bullet$} ;
\draw (-2,-0.1) node[below]{$\frac{1}{2}$} ;

\draw (-1,0.1) node[above]{$-2$} ;
\draw (-1,0) node  {$\bullet$} ;
\draw (-1,-0.1) node[below]{$0$} ;

\draw (0,0.1) node[above]{$-1$} ;
\draw (0,0) node  {$\bullet$} ;
\draw (0,-0.1) node[below]{$0$} ;

\draw (1,0.1) node[above]{$0$} ;
\draw (1,0) node  {$\bullet$} ;
\draw (1,-0.1) node[below]{$-1$} ;

\draw (2,0.1) node[above]{$1$} ;
\draw (2,0) node  {$\bullet$} ;
\draw (2,-0.1) node[below]{$-1$} ;

\draw (3,0.1) node[above]{$2$} ;
\draw (3,0) node  {$\bullet$} ;
\draw (3,-0.1) node[below]{$\frac{1}{2}$} ;

\draw (4,0.1) node[above]{$3$} ;
\draw (4,0) node  {$\bullet$} ;
\draw (4,-0.1) node[below]{$\frac{1}{4}$} ;

\draw (5,0.1) node[above]{$4$} ;
\draw (5,0) node  {$\bullet$} ;
\draw (5,-0.1) node[below]{$\frac{1}{8}$} ;

\draw (5.7,0.2) node[above]{$\cdots$} ;
\draw (5.7,-0.3) node[below]{$\ldots$} ;

\draw (6.5,0.1) node[above]{$k$} ;
\draw (6.5,0) node  {$\bullet$} ;
\draw (6.5,-0.1) node[below]{$\frac{1}{2^{k-1}}$} ;
\end{tikzpicture}$$

We take $\FF:=\OO_X$. Then we have $\theta(\FF)=\sum_{r \in \ZZ} \theta_r h(r)=\sum_{r \in \ZZ} \theta_r=0$, 
therefore $\theta$ satisfies the conditions of Definition \ref{deftheta}. 
Since $D_-=\{0,1\}$, the only $(\OO_X,G)$-submodule of $\FF$ generated in $D_-$ is $\FF':=I_1$. 
We have 
$$\theta(\FF')=\sum_{r \in \ZZ} \theta_r h_{I_1}(r)= \sum_{r \geq 1} \theta_r=0,$$ 
hence $\FF$ is strictly $\mut$-semistable, i.e. $\FF$ is $\mut$-semistable but not $\mut$-stable. 

On the other hand, let us verify that $\FF$ is $\mu_D$-stable when $D$ is big enough. 
Let $D=D_N:=[-N,N]\subset \ZZ=\Irr G$. There exists $N_0 \geq 3$ such that $D_N$ satisfies Hypothesis \ref{hypoD} for all $N \geq N_0$. 
An explicit computation with \eqref{form} gives
\begin{align*}
\mu_{D_N}(\FF')&=\frac{-1}{r(\FF')} \left( \sum_{r\in D_{N}} \theta_r h_{I_{1}}(r)+ \frac{S_{D_{N}}}{d} 
\sum_{r\in D_{N} \setminus D_-} \frac{h_{I_{1}}(r)}{h(r)} \right)\\
&=(-1)\cdot \left(\frac{-1}{2^{N-1}}+\frac{\frac{3}{2^{N-1}}}{2N-1}(N-1)\right)=\frac{1}{2^{N-1}}\left(\frac{-N+2}{2N-1}\right)<0\; .
\end{align*}
Hence, for all $N \geq N_0$, we have $\mu_{D_N}(\FF')<\mu_{D_N}(\FF)=0$, i.e., $\FF$ is $\mu_{D_N}$-stable. 
Finally, it is easy to check that the same holds for all $D$ big enough.
\end{example}

\begin{example} \label{ex2}
Example of a $(G,h)$-constellation $\FF$ which is $\mut$-semistable but $\mu_{D}$-unstable for some finite subsets $D \subset \Irr G$ arbitrarily big.
Let $\theta$ be the stability function defined as follows:
$$\begin{tikzpicture}
\draw [-] (-6.5,0) -- (5.5,0);
\draw (-6,0.1) node[above]{$r=$} ;
\draw (-6,-0.1) node[below]{$\theta_r=$} ;

\draw (-5,0.1) node[above]{$-k$} ;
\draw (-5,0) node  {$\bullet$} ;
\draw (-5,-0.1) node[below]{$0$} ;

\draw (-4,0.2) node[above]{$\cdots$} ;
\draw (-4,-0.3) node[below]{$\ldots$} ;

\draw (-3,0.1) node[above]{$-3$} ;
\draw (-3,0) node  {$\bullet$} ;
\draw (-3,-0.1) node[below]{$0$} ;

\draw (-2,0.1) node[above]{$-2$} ;
\draw (-2,0) node  {$\bullet$} ;
\draw (-2,-0.1) node[below]{$0$} ;

\draw (-1,0.1) node[above]{$-1$} ;
\draw (-1,0) node  {$\bullet$} ;
\draw (-1,-0.1) node[below]{$1$} ;

\draw (0,0.1) node[above]{$0$} ;
\draw (0,0) node  {$\bullet$} ;
\draw (0,-0.1) node[below]{$-1$} ;

\draw (1,0.1) node[above]{$1$} ;
\draw (1,0) node  {$\bullet$} ;
\draw (1,-0.1) node[below]{$-1$} ;

\draw (2,0.1) node[above]{$2$} ;
\draw (2,0) node  {$\bullet$} ;
\draw (2,-0.1) node[below]{$\frac{1}{2}$} ;

\draw (3,0.1) node[above]{$3$} ;
\draw (3,0) node  {$\bullet$} ;
\draw (3,-0.1) node[below]{$\frac{1}{4}$} ;

\draw (4,0.2) node[above]{$\cdots$} ;
\draw (4,-0.3) node[below]{$\ldots$} ;

\draw (5,0.1) node[above]{$k$} ;
\draw (5,0) node  {$\bullet$} ;
\draw (5,-0.1) node[below]{$\frac{1}{2^{k-1}}$} ;
\end{tikzpicture}$$

As in the previous example, we take $\FF:=\OO_X$. Then we have $\theta(\FF)=0$, therefore $\theta$ satisfies the conditions of Definition \ref{deftheta}. 
The only $(\OO_X,G)$-submodule of $\FF$ generated in $D_-=\{0,1\}$ is $\FF':=I_1$, and we have $\theta(\FF')=0$, i.e., 
$\FF$ is strictly $\mut$-semistable.  

On the other hand, let us verify that $\FF$ is $\mu_D$-unstable for some finite subset $D\subset \Irr G$ big enough. 
We denote again $D=D_N:=[-N,N] \subset \ZZ$. 
There exists $N_0 \geq 1$ such that $D_N$ satisfies Hypothesis \ref{hypoD} for all $N \geq N_0$. 
A similar computation using \eqref{form} gives 
\begin{align*}
\mu_{D_N}(\FF')&=\frac{-1}{r(\FF')} \left( \sum_{r\in D_{N}} \theta_r h_{I_{1}}(r)+ \frac{S_{D_{N}}}{d} 
\sum_{r\in D_{N} \setminus D_-} \frac{h_{I_{1}}(r)}{h(r)} \right)\\
&=(-1)\cdot \left(\frac{-1}{2^{N-1}}+\frac{\frac{1}{2^{N-1}}}{2N-1}(N-1)\right)=\frac{1}{2^{N-1}}\left(\frac{N}{2N-1}\right)>0\; .
\end{align*}
Hence, for all $N \geq N_0$, we have $\mu_{D_N}(\FF')>\mu_{D_N}(\FF)=0$, i.e., $\FF$ is $\mu_{D_N}$-unstable. 

To summarize, in this example the $\mut$-HN filtration of $\FF$ is trivial (since $\FF$ is $\mut$-semistable), 
but the $\mu_{D_N}$-HN filtration of $\FF$ is $0 \subsetneq \FF' \subsetneq \FF$ for all $N \geq N_0$. 
The latter is also the $\mut$-Jordan-H\"older filtration of $\FF$; see Remark \ref{HNtoJH}.  
Figure \ref{pictureex2} illustrates the behavior of the $\theta$-polygon and the $D_N$-polygons for different $N$. 

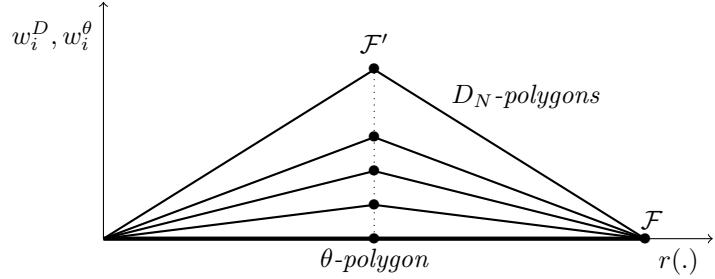
\begin{figure}[h]
\begin{center}
$$\begin{tikzpicture}[scale=0.9]
\draw [->] (-4,0) -- (5,0);
\draw [->] (-4,0) -- (-4,3.5);
\node [left] at  (-4,3) {$w^{D}_{i}, w^{\theta}_{i}$};
\node [below] at (4.5,0) {$r(.)$};
\draw [-, ultra thick] (-4,0) -- (4,0);
\draw (0,0) node  {$\bullet$} ;
\draw (4,0) node  {$\bullet$} ;
\draw [-, thick] (-4,0) -- (0,2.5);
\draw (0,2.5) node  {$\bullet$} ;
\draw [-, thick] (0,2.5) -- (4,0);
\draw [-, thick] (-4,0) -- (0,1.5);
\draw (0,1.5) node  {$\bullet$} ;
\draw [-, thick] (0,1.5) -- (4,0);
\draw [-, thick] (-4,0) -- (0,1);
\draw (0,1) node  {$\bullet$} ;
\draw [, thick] (0,1) -- (4,0);
\draw [-, thick] (-4,0) -- (0,0.5);
\draw (0,0.5) node  {$\bullet$} ;
\draw [, thick] (0,0.5) -- (4,0);
\draw [dotted] (0,0) -- (0,2.5);
\node [above] at (0,2.6) {$\FF'$};
\node [above] at (4.1,0) {$\FF$};

\node [below] at (0,0) {$\theta$\text{-polygon}};
\node [right] at (1,2.1) {$D_{N}$\text{-polygons}};
\end{tikzpicture}$$
\caption{$\theta$-polygon and $D_{N}$-polygons of Example \ref{ex2}}
\label{pictureex2}
\end{center}
\end{figure}
\end{example}

One might ask whether the reason for the $\mut$-HN and the $\mu_D$-HN filtrations to be distinct in Example 
\ref{ex2} is that the $(G,h)$-constellation $\FF$ is $\mut$-semistable (which "forces" the $\mut$-HN filtration to be trivial). 
The answer is actually negative as the next two examples will show.

\begin{example}  \label{ex3}
Example where the $\mut$-HN and $\mu_D$-HN filtrations are both non-trivial, and the number of terms of the $\mu_D$-HN filtration does not stabilize when the finite subset $D \subset \Irr G$ grows.
Let $\theta$ be the stability function defined as follows:
$$\begin{tikzpicture}[scale=0.7]
\draw [-] (-9,0) -- (8.5,0);
\draw (-9,0.1) node[above]{$r=$} ;
\draw (-9,-0.1) node[below]{$\theta_r=$} ;

\draw (-7.8,0.1) node[above]{$-k$} ;
\draw (-7.8,0) node  {$\bullet$} ;
\draw (-7.8,-0.1) node[below]{$\frac{1}{2^{\frac{k-2}{2}}}$} ;

\draw (-6.8,0.2) node[above]{$\cdots$} ;
\draw (-6.8,-0.3) node[below]{$\ldots$} ;

\draw (-6,0.1) node[above]{$-6$} ;
\draw (-6,0) node  {$\bullet$} ;
\draw (-6,-0.1) node[below]{$\frac{1}{4}$} ;

\draw (-5,0.1) node[above]{$-5$} ;
\draw (-5,0) node  {$\bullet$} ;
\draw (-5,-0.1) node[below]{$0$} ;

\draw (-4,0.1) node[above]{$-4$} ;
\draw (-4,0) node  {$\bullet$} ;
\draw (-4,-0.1) node[below]{$\frac{1}{2}$} ;

\draw (-3,0.1) node[above]{$-3$} ;
\draw (-3,0) node  {$\bullet$} ;
\draw (-3,-0.1) node[below]{$0$} ;

\draw (-2,0.1) node[above]{$-2$} ;
\draw (-2,0) node  {$\bullet$} ;
\draw (-2,-0.1) node[below]{$1$} ;

\draw (-1,0.1) node[above]{$-1$} ;
\draw (-1,0) node  {$\bullet$} ;
\draw (-1,-0.1) node[below]{$1$} ;

\draw (0,0.1) node[above]{$0$} ;
\draw (0,0) node  {$\bullet$} ;
\draw (0,-0.1) node[below]{$-1$} ;

\draw (1,0.1) node[above]{$1$} ;
\draw (1,0) node  {$\bullet$} ;
\draw (1,-0.1) node[below]{$-1$} ;

\draw (2,0.1) node[above]{$2$} ;
\draw (2,0) node  {$\bullet$} ;
\draw (2,-0.1) node[below]{$-2$} ;

\draw (3,0.1) node[above]{$3$} ;
\draw (3,0) node  {$\bullet$} ;
\draw (3,-0.1) node[below]{$\frac{1}{2}$} ;

\draw (4,0.1) node[above]{$4$} ;
\draw (4,0) node  {$\bullet$} ;
\draw (4,-0.1) node[below]{$0$} ;

\draw (5,0.1) node[above]{$5$} ;
\draw (5,0) node  {$\bullet$} ;
\draw (5,-0.1) node[below]{$\frac{1}{4}$} ;

\draw (6,0.1) node[above]{$6$} ;
\draw (6,0) node  {$\bullet$} ;
\draw (6,-0.1) node[below]{$0$} ;

\draw (6.8,0.2) node[above]{$\cdots$} ;
\draw (6.8,-0.3) node[below]{$\ldots$} ;

\draw (7.8,0.1) node[above]{$k$} ;
\draw (7.8,0) node  {$\bullet$} ;
\draw (7.8,-0.1) node[below]{$\frac{1}{2^{\frac{k-1}{2}}}$} ;
\end{tikzpicture}$$
Then, again, $\theta$ satisfies the conditions of Definition \ref{deftheta}. We now have $D_-=\{0,1,2\}$, 
hence there are two $(\OO_X,G)$-submodules of $\FF$ generated in $D_-$ which are $\FF_1:=I_2$ and $\FF_2:=I_1$, 
with $r(\FF_{1})=1$ and $r(\FF_{2})=2$. They verify 
$$ \mut(\FF_1)=\mut(\FF_2)=1>\mut(\FF)=0,$$
and thus the $\mut$-HN filtration of $\FF$ is 
$$0 \subsetneq \FF_2 \subsetneq \FF.$$

Now, let us distinguish between two situations: the \emph{even} and the \emph{odd} cases.
For the even case we take $D=D_{N,\even}:=[-2N-2,2N+2]$, and for the odd case we take $D=D_{N,\odd}:=[-2N-3,2N+3]$, where these bounds are chosen in order to simplify the calculations. In both cases, there exists $N_0 \geq 1$ such that $D_{N,\bullet}$ satisfies Hypothesis \ref{hypoD} for all $N \geq N_0$.   

In the even case, we have
\begin{align*}
\mu_{D_{N, \even}}(\FF_{1})&=\frac{-1}{r(\FF_{1})} \left( \sum_{r\in D_{N,\even}} \theta_r h_{I_{2}}(r)+ \frac{S_{D_{N, \even}}}{d} 
\sum_{r\in D_{N, \even} \setminus D_-} \frac{h_{I_{2}}(r)}{h(r)} \right)\\
&=(-1)\cdot \left((-1-\frac{1}{2^{N}})+\frac{\frac{1}{2^{N-1}}}{4N+2}(2N)\right)=1+\frac{1}{2^{N}}\left(\frac{2}{4N+2}\right)\; ; \text{ and }\\
\mu_{D_{N, \even}}(\FF_{2})&=\frac{-1}{r(\FF_{2})} \left( \sum_{r\in D_{N,\even}} \theta_r h_{I_{1}}(r)+ \frac{S_{D_{N, \even}}}{d} 
\sum_{r\in D_{N, \even} \setminus D_-} \frac{h_{I_{1}}(r)}{h(r)} \right)\\
&=\left(\frac{-1}{2}\right)\cdot \left((-2-\frac{1}{2^{N}})+\frac{\frac{1}{2^{N-1}}}{4N+2}(2N)\right)=1+\frac{1}{2^{N+1}}\left(\frac{2}{4N+2}\right)\; .
\end{align*}
Observe that $\mu_{D_{N,\even}}(\FF_{1})>\mu_{D_{N,\even}}(\FF_{2})>0$, hence the $\mu_{D_{N,\even}}$-HN filtration of $\FF$ is
$$0\subsetneq \FF_{1}\subsetneq \FF_{2}\subsetneq \FF\; .$$

Performing analogous calculations in the odd case, we get
\begin{align*}
\mu_{D_{N,\odd}}(\FF_{1})&=1+\frac{1}{2^{N+1}}\left(\frac{-2N+1}{4N+4}\right)\; ; \text{ and}\\
\mu_{D_{N,\odd}}(\FF_{2})&=1+\frac{1}{2^{N+2}}\left(\frac{-2N+1}{4N+4}\right)\; .
\end{align*}
We see that $\mu_{D_{N,\odd}}(\FF_{2})>\mu_{D_{N,\odd}}(\FF_{1})>0$, hence the $\mu_{D_{N,\odd}}$-HN filtration of $\FF$ is 
$$0\subsetneq \FF_{2}\subsetneq \FF\; .$$

Therefore, in the \emph{odd} case, the $\mu_{D_{N,\odd}}$-HN filtration has exactly the same terms as the $\mut$-HN filtration, but in the 
\emph{even} case, the $\mu_{D_{N,\even}}$-HN filtration has one more term. 
Figure \ref{pictureexevenodd} illustrates this behavior by showing the $\theta$-polygon and the $D_{N}$-polygons of $\FF$.
Also, observe how the $D_{N,\even}$-polygons lie above the $\theta$-polygon, the $D_{N,\odd}$-polygons lie below, and both sequences of polygons converge to the $\theta$-polygon when $N$ grows (as stated by Theorem \ref{convergence}).

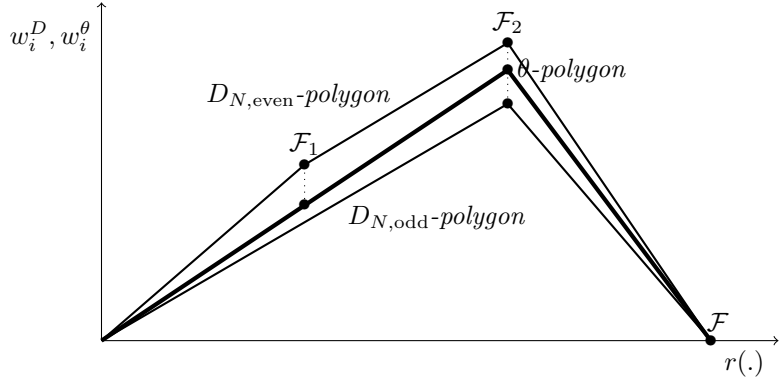
\begin{figure}[h]
\begin{center}
$$\begin{tikzpicture}[scale=0.9]
\draw [->] (-5,0) -- (5,0);
\draw [->] (-5,0) -- (-5,5);
\node [left] at  (-5,4.5) {$w^{D}_{i}, w^{\theta}_{i}$};
\node [below] at (4.5,0) {$r(.)$};
\draw [-,ultra thick] (-5,0) -- (-2,2);
\draw (-2,2) node  {$\bullet$} ;
\draw [-,ultra thick] (-2,2) -- (1,4);
\draw (1,4) node  {$\bullet$} ;
\draw [-, ultra thick] (1,4) -- (4,0);
\draw (4,0) node  {$\bullet$} ;
\draw [-, thick] (-5,0) -- (-2,2.6);
\draw (-2,2.6) node  {$\bullet$} ;
\draw [-, thick] (-2,2.6) -- (1,4.4);
\draw (1,4.4) node  {$\bullet$} ;
\draw [-, thick] (1,4.4) -- (4,0);
\draw [-, thick] (-5,0) -- (1,3.5);
\draw (1,3.5) node  {$\bullet$} ;
\draw [-, thick] (1,3.5) -- (4,0);
\draw [dotted] (1,3.5) -- (1,4.4);
\draw [dotted] (-2,2) -- (-2,2.6);
\node [above] at (1,4.4) {$\FF_{2}$};
\node [above] at (-2,2.6) {$\FF_{1}$};
\node [above] at (4.1,0) {$\FF$};

\node [right] at (1,4) {$\theta$\text{-polygon}};
\node [left] at (-0.6,3.6) {$D_{N,\even}$\text{-polygon}};
\node [right] at (-1.5,1.8) {$D_{N,\odd}$\text{-polygon}};
\end{tikzpicture}$$
\caption{$\theta$-polygon and $D_{N}$-polygons of Example \ref{ex3}}
\label{pictureexevenodd}
\end{center}
\end{figure}
\end{example}

To compute our final example we will slightly change the framework and pick another Hilbert function $h$. 
Consider $G=\Gm$ with the same action on the algebra $\CC[x,y]$ as before, but take now $X:=\Spec \CC[x,y]/(xy^{2},x^{3}y)$. 
Then we have
\begin{align*}
\CC[x,y]/(xy^{2},x^{3}y) &\cong \CC[x]_{>0} \;\oplus \;\CC \;\oplus \;\CC<\overline{xy}>\; \oplus \;\CC<\overline{x^2y}>\; \oplus \; \CC[y]_{>0}\\
                         &\cong \left(\bigoplus_{r \in \ZZ \setminus \{0,1\}} V_r \right) \oplus V_0^{\oplus 2} \oplus V_1^{\oplus 2}.
\end{align*}                         
as $G$-modules, where $\overline{x}$ and $\overline{y}$ denote the images of $x$ and $y$ in $\CC[x,y]/(xy^{2},x^{3}y)$.  
Let $h: \ZZ \to \NN$ be the Hilbert function defined by $h(r)=2$ if $r=0,1$, and $h(r)=1$ for all $r\neq 0,1$.
It is clear that $\OO_X$ is a $(G,h)$-constellation on $X$, provided that we choose $\theta$ such that $\theta_{0}<0$ (to guarantee that $\OO_X$ is generated in $D_-$).

\begin{example} \label{ex4}
Example where, for all $D$ big enough, the $\mut$-HN and $\mu_D$-HN filtrations are both non-trivial, and the $\mut$-HN filtration 
is a strict subfiltration of the $\mu_D$-HN filtration which is, in turn, a strict subfiltration of some $\mut$-Jordan-H\"older filtration 
(see Remark \ref{HNtoJH}).  
Let $\theta$ be defined as follows:
$$\begin{tikzpicture}
\draw [-] (-5.7,0) -- (6.8,0);
\draw (-5.2,0) node[above]{$r=$} ;
\draw (-5.2,0) node[below]{$\theta_{r}=$} ;

\draw (-4.4,0) node[above]{$-k$} ;
\draw (-4.4,0) node  {$\bullet$} ;
\draw (-4.4,0) node[below]{$0$} ;

\draw (-3.7,0) node[above]{$\cdots$} ;
\draw (-3.7,0) node[below]{$\ldots$} ;

\draw (-3,0) node[above]{$-3$} ;
\draw (-3,0) node  {$\bullet$} ;
\draw (-3,0) node[below]{$0$} ;

\draw (-2,0) node[above]{$-2$} ;
\draw (-2,0) node  {$\bullet$} ;
\draw (-2,0) node[below]{$0$} ;

\draw (-1,0) node[above]{$-1$} ;
\draw (-1,0) node  {$\bullet$} ;
\draw (-1,0) node[below]{$5$} ;

\draw (0,0) node[above]{$0$} ;
\draw (0,0) node  {$\bullet$} ;
\draw (0,0) node[below]{$-1$} ;

\draw (1,0) node[above]{$1$} ;
\draw (1,0) node  {$\bullet$} ;
\draw (1,0) node[below]{$-1$} ;

\draw (2,0) node[above]{$2$} ;
\draw (2,0) node  {$\bullet$} ;
\draw (2,0) node[below]{$-2$} ;

\draw (3,0) node[above]{$3$} ;
\draw (3,0) node  {$\bullet$} ;
\draw (3,0) node[below]{$\frac{1}{2}$} ;

\draw (4,0) node[above]{$4$} ;
\draw (4,0) node  {$\bullet$} ;
\draw (4,0) node[below]{$\frac{1}{4}$} ;

\draw (5,0) node[above]{$5$} ;
\draw (5,0) node  {$\bullet$} ;
\draw (5,0) node[below]{$\frac{1}{8}$} ;

\draw (5.7,0) node[above]{$\cdots$} ;
\draw (5.7,0) node[below]{$\ldots$} ;

\draw (6.4,0) node[above]{$k$} ;
\draw (6.4,0) node  {$\bullet$} ;
\draw (6.4,0) node[below]{$\frac{1}{2^{k-2}}$} ;
\end{tikzpicture}$$
Let $\FF:=\OO_X$ and observe that $\theta(\FF)=0$, so that $\theta$ satisfies the conditions of Definition \ref{deftheta}. 
There are five non-zero proper $(\OO_X,G)$-submodules of $\FF$ generated in $D_-=\{0,1,2\}$, say
$$\FF_{1}:=(\overline{x}^{2}\overline{y}),\  \FF_{2}:=(\overline{x}^{2}),\ \FF_{3}:=(\overline{x}^{2}, \overline{x}\overline{y}),\ 
\FF_{4}:=(\overline{x}), \text{\ and\ }\,\FF_{5}:=(\overline{x}\overline{y}).$$ 
These submodules verify
$$r(\FF_{1})=1,\ r(\FF_{2})=2,\ r(\FF_{3})=3,\ r(\FF_{4})=4, \text{\ and\ }\,r(\FF_{5})=2.$$ 
For the sake of completeness, we detail their Hilbert functions which are
\begin{align*}
h_{\FF_1}(r)=\left\{
\begin{array}{l}
  1,\; r=1\\
  0,\; r\neq 1
\end{array}
\right\};\;\;
& h_{\FF_2}(r)=\left\{
\begin{array}{l}
  1,\; r\geq 1\\
  0,\; r\leq 0
\end{array}
\right\};
& h_{\FF_3}(r)=\left\{
\begin{array}{l}
  1,\; r\geq 0\\
  0,\; r\leq -1
\end{array}
\right\}; 
\end{align*}
\begin{align*}
h_{\FF_4}(r)=\left\{
\begin{array}{l}
  1,\; r=0\\
  2,\; r=1\\
  1,\; r\geq 2\\
  0,\; r\leq -1
\end{array}
\right\};\;\; 
& h_{\FF_5}(r)=\left\{
\begin{array}{l}
  1,\; r=0,1\\
  0,\; r\neq 0,1
\end{array}
\right\}.
\end{align*}

First, we see that
$$\mut(\FF_{1})=\mut(\FF_{2})=\mut(\FF_{3})=\mut(\FF_{4})=\mut(\FF_{5})=1>0=\mut(\FF)\; ,$$
hence the $\mut$-HN filtration of $\FF$ is
$$0\subsetneq \FF_{4}\subsetneq \FF\; .$$

Now, let $D=D_{N}:=[-N,N] \subset \ZZ$. There exists $N_0 \geq 1$ such that $D_N$ satisfies Hypothesis \ref{hypoD} for all $N \geq N_0$. 
Let us determine the $\mu_{D_N}$-HN filtration of $\FF$. 
Analogous calculations to those performed in the previous examples give
$$\mu_{D_{N}}(\FF_{1})=1\; ;$$
$$\mu_{D_{N}}(\FF_{2})=1+\frac{1}{2^{N-1}}\left(\frac{N-\frac{3}{2}}{2N-2}\right)\; ;$$
$$\mu_{D_{N}}(\FF_{3})=1+\frac{1}{3\cdot 2^{N-2}}\left(\frac{N-2}{2N-2}\right)\; ;$$
$$\mu_{D_{N}}(\FF_{4})=1+\frac{1}{2^{N}}\left(\frac{N-\frac{5}{2}}{2N-2}\right)\; ;$$
$$\mu_{D_{N}}(\FF_{5})=1\; .$$
Also, asymptotically we have 
$$\mu_{D_{N}}(\FF_{2})\sim 1+\frac{1}{2^{N}}\; ;$$
$$\mu_{D_{N}}(\FF_{3})\sim 1+\frac{1}{3\cdot 2^{N-1}}\; ;$$
$$\mu_{D_{N}}(\FF_{4})\sim 1+\frac{1}{2^{N+1}}\; .$$
Hence, there exists $N_{1}\geq N_{0}$ such that, for all $N\geq N_{1}$, the $\mu_{D_N}$-HN filtration of $\FF$ is  
$$0\subsetneq \FF_{2}\subsetneq \FF_{3}\subsetneq \FF_{4}\subsetneq\FF\; .$$

Observe that, for $N\geq N_{1}$, the $\mu_{D_N}$-HN filtration contains the unique non trivial term of the $\mut$-HN filtration, $\FF_{4}$, (as proved
in Theorem \ref{HNterms}) but it contains also two more terms, $\FF_{2}$ and $\FF_{3}$. On the other hand, the first 
$\mut$-semistable factor of the $\mut$-HN filtration of $\FF$, which is $\FF_{4}$, has two different $\mut$-Jordan-H\"older filtrations
$$0\subsetneq \FF_{1}\subsetneq \FF_{2}\subsetneq \FF_{3}\subsetneq \FF_{4}\;, \text{   and     }
\;0\subsetneq \FF_{1}\subsetneq \FF_{5}\subsetneq \FF_{3}\subsetneq \FF_{4}\; ,$$
and the $\mu_{D_N}$-HN filtration is a subfiltration of the first one but not of the second one (see Remark \ref{HNtoJH}). 
Figure \ref{pictureexample4} illustrates the situation.  
\begin{figure}[h]
\begin{center}
$$\begin{tikzpicture}[scale=0.9]
\draw [->] (-5,0) -- (6,0);
\draw [->] (-5,0) -- (-5,5);
\node [left] at  (-5,4.5) {$w^{D}_{i}, w^{\theta}_{i}$};
\node [below] at (5.5,0) {$r(.)$};
\draw [-,ultra thick] (-5,0) -- (3,4);
\draw (-3,1) node  {$\bullet$} ;
\draw (-1,2) node  {$\bullet$} ;
\draw (1,3) node  {$\bullet$} ;
\draw (3,4) node  {$\bullet$} ;
\draw [-, ultra thick] (3,4) -- (5,0);
\draw (5,0) node  {$\bullet$} ;
\draw [-, thick] (-5,0) -- (-1,3);
\draw (-1,3) node  {$\bullet$} ;
\draw [-, thick] (-1,3) -- (1,4);
\draw (1,4) node  {$\bullet$} ;
\draw [-, thick] (1,4) -- (3,4.6);
\draw (3,4.6) node  {$\bullet$} ;
\draw [-, thick] (3,4.6) -- (5,0);
\draw [dotted] (-1,2) -- (-1,3);
\draw [dotted] (1,3) -- (1,4);
\draw [dotted] (3,4) -- (3,4.6);
\node[below] at (-3,0.9) {$\FF_{1}$};
\node [below] at (-1,1.9) {$\FF_{2}$};
\node [below] at (1,2.8) {$\FF_{3}$};
\node [below] at (2.9,3.8) {$\FF_{4}$};
\node [above] at (-1,3.1) {$\FF_{2}$};
\node [above] at (1,4.1) {$\FF_{3}$};
\node [above] at (3,4.6) {$\FF_{4}$};
\node [above] at (5.2,0) {$\FF$};

\node [below right] at (-0.2,2.4) {$\theta$\text{-polygon}};
\node [above left] at (0.6,3.7) {$D_{N}$\text{-polygon}};
\end{tikzpicture}$$
\caption{$\theta$-polygon and $D_{N}$-polygons of Example \ref{ex4}}
\label{pictureexample4}
\end{center}
\end{figure}
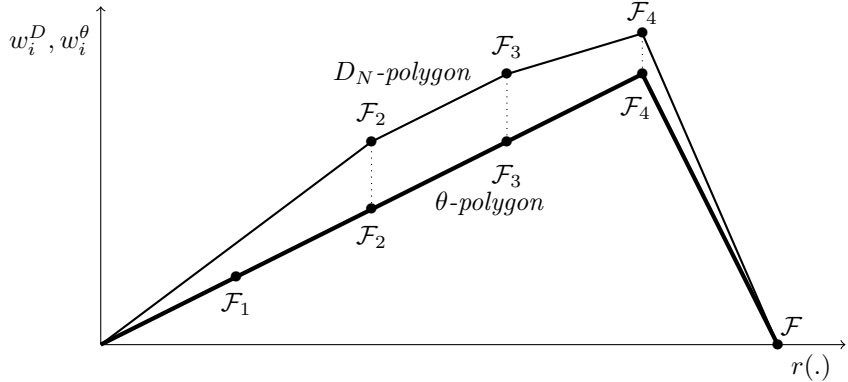
\end{example}

\begin{remark}
For the reader wiling to consider fancier situations, other examples of $(G,h)$-constellations with $G$ a classical group can be found in \cite{Ter12}.
\end{remark}

\noindent \textbf{Acknowledgements.}
We thank the referee for his/her review and highly appreciate the comments and suggestions he/she made which significantly contributed to improving the quality of our article. 
 The first-named author is grateful to Christian Lehn for suggesting the correct definition of $\theta$-stability in the setting 
of $(G,h)$-constellations and for helpful discussions. The second-named author thanks the Johannes Gutenberg Universit\"at in Mainz for the 
hospitality it provided while part of this work was done. The first-named author benefits from the support of the DFG via the SFB/TR 45 
''Periods, Moduli Spaces and Arithmetic of Algebraic Varieties''. 
The second-named author is supported by the project \textquotedblleft Comunidade Portuguesa de Geometr\'ia Algebrica\textquotedblright
\;PTDC/MAT-GEO/0675/2012 funded by Portuguese FCT and project MTM2016-79400-P granted by the Spanish Ministerio de Econom\'ia y Competitividad.

\vspace{-2mm}

\end{document}